\documentclass{amsart}
\usepackage{amsmath,amssymb,graphicx,amsthm,bm,hyperref,mathrsfs}
\usepackage{graphicx}
\usepackage{hyperref}
\hypersetup{colorlinks=true,citecolor=blue,linkcolor=cyan}
\vfuzz2pt 
\hfuzz2pt 
\newtheorem{thm}{Theorem}[section]
\newtheorem{cor}[thm]{Corollary}
\newtheorem{lem}[thm]{Lemma}
\newtheorem{prop}[thm]{Proposition}
\theoremstyle{definition}
\newtheorem{defn}[thm]{Definition}
\theoremstyle{remark}
\newtheorem{rem}[thm]{Remark}
\numberwithin{equation}{section}

\newcommand{\F}{\mathcal{F}}
\newcommand{\Ff}{\mathbb{F}}
\newcommand{\R}{\mathcal{R}}
\newcommand{\s}{\mathcal{S}}
\newcommand{\Gal}{\mbox{Gal}}
\newcommand{\Z}{\mathbb{Z}}
\newcommand{\Hh}{\mathcal{H}}
\newcommand{\Tr}{\mbox{Tr}}
\newcommand{\Frob}{\mbox{Frob}}
\newcommand{\Prob}{\mbox{Prob}}
\newcommand{\Pp}{\mathbb{P}}

\newcommand{\Mod}[1]{\ (\text{mod}\ #1)}
\newcommand{\ord}{\mbox{ord}}
\newcommand{\lcm}{\mbox{lcm}}
\begin{document}

\title[Distribution of Points on Abelian Covers Over Finite Fields]{Distribution of Points on Abelian Covers Over Finite Fields}%
\author{Patrick Meisner}%


\begin{abstract}

We determine in this paper the distribution of the number of points on the covers of $\Pp^1(\Ff_q)$ such that $K(C)$ is a Galois extension and $\Gal(K(C)/K)$ is abelian when $q$ is fixed and the genus, $g$, tends to infinity. This generalizes the work of Kurlberg and Rudnick and Bucur, David, Feigon and Lalin who considered different families of curves over $\Ff_q$. In all cases, the distribution is given by a sum of $q+1$ random variables.

\end{abstract}
\maketitle


\section{Introduction}\label{intro}

Let $q$ be a power of a prime and $C$ a smooth, projective curve over $\Ff_q$. Denote $\Ff_q(X)$ as $K$ and $K(C)$ as the field of functions of $C$. Then $K(C)$ is a finite extension of $K$. Moreover, if we fix a copy of $\Pp^1(\Ff_q)$, then every finite extension of $K$ corresponds to smooth, projective curve (Corollary 6.6 and Theorem 6.9 from Chapter I of \cite{hart}).

If $K(C)/K$ is Galois, then denote $\Gal(C)= \Gal(K(C)/K)$. Let $g(C)$ be the genus of $C$. Define the family of smooth, projective curves
$$\Hh_{G,g} = \{C : \Gal(C)=G, g(C)=G\}.$$
We want to determine the probability, that a random curve in this family has a given number of points. That is, for every $N\in \Z_{\geq 0}$, we want to determine
$$\Prob(C\in\Hh_{G,g} : \#C(\Pp^1(\Ff_q)) = N) = \frac{|\{C\in \Hh_{G,g} :  \#C(\Pp^1(\Ff_q)) = N\}|}{|\Hh_{G,g}|}$$

It is well know that
$$\#C(\Pp^1(\Ff_q)) = q+1 - \Tr(\Frob_q)$$
where $\Frob_q$ is the $q^{th}$-power Frobenius. Moreover, a classical result due to Katz and Sarnak \cite{KS} says that if $g$ is fixed and we let $q$ tend to infinity then the trace of the Frobenius in a family is distributed like the trace of a random matrix in the monodromy group associated to the family. We will be interested in what happens when $q$ is fixed and we let $g$ tend to infinity.

Several cases of this are known for specific family of groups. It was first done by Kurblerg and Rudnick \cite{KR} for hyper-elliptic curves ($G=\Z/2\Z$). This was extended by Bucur, David, Feigon and Lalin \cite{BDFL1},\cite{BDFL2} for prime cyclic curves ($G=\Z/p\Z$, $p$ a prime). Lorenzo, Meleleo and Milione \cite{LMM} then determined this for $n$-quadratic curves ($G=(\Z/2\Z)^n$). More recently the author \cite{M1} extended the work of Bucur, David, Feigon and Lalin to the case of arbitrary cyclic curves ($G=\Z/r\Z$, $r$ not necessarily a prime).

In all the works mentioned above the probability is not determine for the whole family $\Hh_{G,g}$ but instead for an irreducible moduli space of the family. That is, we can write
$$\Hh_{G,g} = \bigcup_{\vec{d}(\vec{\alpha})} \Hh^{\vec{d}(\vec{\alpha})}$$
where $\vec{d}(\vec{\alpha}) = (d(\vec{\alpha}))_{\vec{\alpha}}$ in a non-negative integer valued vector indexed by a set of $|G|-1$ vectors (the $\vec{\alpha}$) and the union is over all such vectors that satisfy a linear equation and a set of linear congruence conditions. Moreover, $\Hh^{\vec{d}(\vec{\alpha})}$ is a set of tuples of polynomials of prescribed degree that correspond to a curve with Galois group $G$ and genus $g(C)$. See Section \ref{genformsec} for a full description of these sets.

\begin{rem}\label{conddisc}

There is a natural correspondence between the genus of the curve and the degree of the discriminant of $K(C)$. Through this correspondence we can view $\Hh^{\vec{d}(\vec{\alpha})}$ as the set of curves such that the degree of the conductor of $K(C)$ is fixed. Then the linear relationships that the union is over is the conductor-discriminant formula.

\end{rem}

Moreover, all the previous results restrict to the case that $q\equiv 1 \mod{\exp(G)}$ where $\exp(G) = \min(n : ng=e \mbox{ for all } g\in G)$. This is in order to use Kummer theory to get a classification of the curves. Therefore, our main result will be for this irreducible moduli space under this same assumption.

\begin{thm}\label{mainthm2}
Let $G=\Z/r_1\Z\times\dots\times\Z/r_n\Z$ such that $r_j|r_{j+1}$ and fix $q$ such that $q\equiv 1 \Mod{r_n}$ then as $d(\vec{\alpha})\to\infty$ for all $\vec{\alpha}\in\R$,

$$\frac{|\{C\in\Hh^{\vec{d}(\vec{\alpha})} : \#C(\Pp^1(\Ff_q)) = M\}|}{|\Hh^{\vec{d}(\vec{\alpha})}|} \sim\Prob\left(\sum_{i=1}^{q+1} X_i= M\right)$$
where the $X_i$ are $i.i.d.$ random variables taking value $0$ or $\frac{|G|}{s}$ for some $s|r_n$ such that
$$X_i = \begin{cases} \frac{|G|}{s} & \mbox{with probability }  \frac{s\phi_G(s)} {|G|(q+|G| -1)} \mbox{ if } s\not=1 \\ |G| & \mbox{with probability }  \frac{q}{|G|(q+|G|-1)} \\ 0 & \mbox{with probability } \frac{(|G|-1)(q+|G|) -\sum_{s|r_n}s\phi_G(s)+1}{|G|(q+|G|-1)}    \end{cases}$$
where $\phi_G(s)$ is the number of elements of $G$ of order $s$.
\end{thm}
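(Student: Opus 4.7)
The strategy mirrors the template of \cite{KR,BDFL1,M1}, now deployed uniformly for every abelian $G$ thanks to the hypothesis $q \equiv 1 \pmod{r_n}$, which puts us squarely in the Kummer setting. Using the parameterization of $\Hh^{\vec d(\vec\alpha)}$ recalled in Section \ref{genformsec}, I identify each $C$ with a tuple of coprime squarefree polynomials $(F_\chi)_{\chi}$ indexed by the characters of $G$ and decompose
\[
\#C(\Pp^1(\Ff_q)) \;=\; \sum_{P \in \Pp^1(\Ff_q)} N_P(C),
\]
where $N_P(C)$ counts $\Ff_q$-points of $C$ above $P$. A direct local analysis shows $N_P(C) = |G|/s$ exactly when the decomposition group at $P$ coincides with an inertia subgroup of order $s \mid r_n$, and $N_P(C) = 0$ otherwise. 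This local behavior is determined by two pieces of Kummer data at $P$: which of the $F_\chi$ vanish at $P$ (fixing the inertia subgroup), and, for those that do not, the classes of $F_\chi(P)$ in $\Ff_q^*/(\Ff_q^*)^{r_j}$ (fixing the Frobenius).

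To produce the joint distribution, I expand
\[
\#\{C \in \Hh^{\vec d(\vec\alpha)} : \#C(\Pp^1(\Ff_q)) = M\} = \sum_{\substack{(m_P)_P \\ \sum_P m_P = M}} \#\{C : N_P(C) = m_P \ \forall P\},
\]
and compute each inner count by writing the $q+1$ local constraints as multiplicative character sums on $\Ff_q^*$ twisted by $\hat G$. Each constraint cuts out an $O(1)$-codimension condition on the tuple of polynomials, whose degrees tend to infinity, so the principal-character contribution yields $|\Hh^{\vec d(\vec\alpha)}|$ times a product of local densities, while non-principal contributions are bounded by Weil-type estimates combined with the sieve framework of \cite{BDFL1,M1}. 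This delivers the asymptotic factorization
\[
\#\{C : N_P(C) = m_P \ \forall P\} \sim |\Hh^{\vec d(\vec\alpha)}| \prod_{P \in \Pp^1(\Ff_q)} \Prob(X = m_P).
\]

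Finally, I compute the single-place density $\Prob(X = |G|/s)$ by counting, for a fixed $P$, the Kummer tuples inducing inertia of order $s$ with trivial residual extension. A direct local computation produces the numerators $s\phi_G(s)$ for $s > 1$ (ramified case) and $q$ for $s = 1$ (completely split case), over the common denominator $|G|(q+|G|-1)$ which counts the total admissible local Kummer data at $P$. The zero term fills in the remaining mass, which one verifies sums to one using the group-theoretic identity $\sum_{s \mid r_n}\phi_G(s) = |G|$.

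The principal obstacle is the asymptotic factorization step, uniformly in $\vec d(\vec\alpha)$. For cyclic $G$ this is the content of the character-sum and sieve estimates of \cite{M1}; for general abelian $G$ one must extend those estimates to the product characters of $\hat G \cong \prod_j \Z/r_j\Z$ and carefully track how the Kummer compatibility relations among the $F_\chi$ that define $\Hh^{\vec d(\vec\alpha)}$ interact with the $q+1$ independent local constraints. Once this factorization is in hand, matching the single-place factor to the law of $X_i$ in the statement completes the proof of Theorem \ref{mainthm2}.
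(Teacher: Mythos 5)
Your plan follows essentially the same approach as the paper: parameterize the moduli space via Kummer data, decompose the point count place-by-place, compute joint local densities via the squarefree sieve/character-sum machinery of \cite{M1}, and match the resulting per-place factor to the law of $X_i$ using $\sum_{s\mid r_n}\phi_G(s)=|G|$. The reformulations you use — indexing the Kummer tuples by $\hat G$ rather than the paper's $(\vec s,\vec\omega)$ pairs, and describing the local fiber count via ``decomposition group equals inertia of order $s$'' rather than via the explicit formula $\sum_{\vec s,\vec\omega}\chi_{\ell(\vec s)}\bigl(F^{(\vec\omega)}_{(\vec s)}(x)\bigr)$ — are equivalent repackagings, not a different route.

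One caveat worth flagging: the bulk of the paper's work is precisely the part you compress into ``a direct local computation produces the numerators.'' To extract the numerators $s\phi_G(s)$ one must classify, for each place, which patterns of vanishing/nonvanishing of the $F^{(\vec\omega)}_{(\vec s)}$ are mutually consistent (the paper's \emph{admissibility} analysis, Section~\ref{admis}), determine the fibers of the resulting equivalence relation on $\R'$ (Corollaries~\ref{simcor}--\ref{simcor2}, giving the $\phi(e(\vec\beta))$ and $|A_{\vec\beta}|=|G|/e(\vec\beta)$ counts), and handle the place at infinity separately inside $\hat{\F}_{[\vec d(\vec\alpha)]}$ (Proposition~\ref{valtakpropinf}). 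Also, the denominator $|G|(q+|G|-1)$ does not literally ``count total local Kummer data''; it emerges from the local Euler factors of the coprime-squarefree sieve in Proposition~3.4 of \cite{M1}, and identifying it requires carrying that sieve through carefully. Your plan names all the right steps, but the proof content lives inside the step you assert.
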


\begin{rem}

Notice that in our result, we require $d(\vec{\alpha})$ to tend to infinity for all $\vec{\alpha}$. This implies that the genus tends to infinity as the genus can be written as a linear combination of the $d(\vec{\alpha})$. However, the converse is not true. That is, if $g$ tends to infinity, this only implies that at least one of the $d(\vec{\alpha})$ would tend to infinity. In this case, the error term would not necessarily go to zero. Bucur, David, Feigon, Kaplan, Lalin, Ozman and Wood \cite{BDFK+} solve this problem for the whole space $\Hh_{G,g}$ where $G$ is a prime cyclic. Work is done towards extending this by the author to any abelian group in a forthcoming paper  with success in the case $G$ is a power of a prime cyclic ($G=(\Z/p\Z)^n$, $p$ a prime).


\end{rem}


\section{Genus Formula and Irreducible Moduli Space}\label{genformsec}

In this section we will first determine a formula for the genus of the curve and from this formula create the irreducible moduli spaces $\Hh^{\vec{d}(\vec{\alpha})}$.

Let $C$ be a curve such that $\Gal(C)=G$ is abelian. Then we can find unique $r_j$ such that $r_j|r_{j+1}$ and $G=\Z/r_1\Z\times\dots\times\Z/r_n\Z$. Therefore, $\exp(G)=r_n$. Since we are assuming $q\equiv1\Mod{r_n}$, we get that $\mu_{r_n}\subset K$ and hence $K(C)/K$ is a Kummer extension. Then Kummer Theory (Chap.14 Proposition 37 of \cite{DF}) tells us that there exists $F_1, \dots, F_n\in \Ff_q[X]$ such that $F_j$ is $r_j^{th}$-power free and
$$K(C) = K(\sqrt[r_1]{F_1},\dots,\sqrt[r_n]{F_n}).$$

Let $g=g(C)$, be the genus of the curve $C$. Then the Riemann-Hurwitz formula (Theorem 7.16 of \cite{rose}), says that
\begin{align}\label{genform4}
2g+2|G|-2 = \sum_{\mathfrak{P}}(e(\mathfrak{P}/P)-1)\deg_{K(C)}(\mathfrak{P})
\end{align}
where the sum is over all primes $\mathfrak{P}$ of $K(C)$, $e(\mathfrak{P}/P)$ is the ramification index and $\deg_{K(C)}(\mathfrak{P})$ is the dimension of $\mathcal{O}_{K(C)}/\mathfrak{P}$ as a vector space over $\Ff_q$. By Proposition 7.7 of \cite{rose}, we get that if $\mathfrak{P}|P$, then $\deg_{K(C)}(\mathfrak{P}) = f(\mathfrak{P}/P)\deg_K(P)$, where $f(\mathfrak{P}/P)$ is the inertia degree and $\deg_K(P)$ is the degree of the polynomial $P$.  Moreover, since our extension is Galois, we get that for any $\mathfrak{P}_1,\mathfrak{P}_2|P$, $e(\mathfrak{P}_1/P)=e(\mathfrak{P}_2/P):=e(P)$ and $f(\mathfrak{P}_1/P)=f(\mathfrak{P}_2/P):=f(P)$. Hence,
$$\sum_{\mathfrak{P}|P}(e(\mathfrak{P}/P)-1)\deg_{K(C)}(\mathfrak{P}) = g(P)(e(P)-1)f(P)\deg_K(P)$$
$$ = \left(|G|-\frac{|G|}{e(P)}\right)\deg_K(P),$$
where $g(P)$ is the number of $\mathfrak{P}|P$.

Therefore, \eqref{genform4} becomes
\begin{align}\label{genform5}
2g+2|G|-2 = \sum_{P}\left(|G|-\frac{|G|}{e(P)}\right)\deg_K(P)
\end{align}
where the sum is over all the primes in $K$. Hence it is enough to determine the ramification index for all $P$ in $K$.

\begin{lem}\label{ramlem1}

Let $K \subset L \subset L(\sqrt[r]{F(X)})=L'$ be an extension of fields where $F\in \Ff_q[X]$ is $r^{th}$-power free and $[L':L]=r$. Let $\mathfrak{P}$ be a prime in $L$ and $\mathfrak{P}'$ be a prime in $L'$, lying over $\mathfrak{P}$. If $\ord_{\mathfrak{P}}(F) = n$, then $e(\mathfrak{P}'/\mathfrak{P}) = \frac{r}{(r,n)}$.

\end{lem}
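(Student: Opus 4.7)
The plan is to pass to the completion at $\mathfrak{P}$ and analyze the extension locally, exploiting the tameness implicit in the paper's setup (since $q\equiv 1\Mod{r_n}$ and $r\mid r_n$ give $(r,q)=1$). Write $d=(r,n)$, $r=dr'$, $n=dn'$ with $(r',n')=1$, and choose a uniformizer $\pi$ of $\hat{L}_\mathfrak{P}$, so that $F=u\pi^n$ for some unit $u$. The target is $e(\mathfrak{P}'/\mathfrak{P})=r'$. The lower bound is immediate from valuations: with $\alpha=\sqrt[r]{F}$ and $v'$ the normalized valuation at $\mathfrak{P}'$, the identity $rv'(\alpha)=v'(F)=e(\mathfrak{P}'/\mathfrak{P})\cdot n$ forces $r\mid e(\mathfrak{P}'/\mathfrak{P})\cdot n$, and coprimality of $r'$ and $n'$ then gives $r'\mid e(\mathfrak{P}'/\mathfrak{P})$.

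For the matching upper bound I would factor the extension as $L\subset L(\beta)\subset L(\alpha)$ where $\beta=\alpha^{r'}$, so that $\beta^d=\alpha^r=F=u\pi^n$. The auxiliary element $\delta=\beta/\pi^{n'}$ then satisfies $\delta^d=u$, a unit at $\mathfrak{P}$. Because $(d,q)=1$, the polynomial $X^d-u$ reduces to a separable polynomial modulo $\mathfrak{P}$, and Hensel's lemma (applied over an unramified extension if $\bar{u}$ is not already a $d$-th power in the residue field) places $\delta$ inside an unramified extension of $\hat{L}_\mathfrak{P}$. Hence $\hat{L}_\mathfrak{P}(\beta)=\hat{L}_\mathfrak{P}(\delta)$ is unramified over $\hat{L}_\mathfrak{P}$, the element $\pi$ remains a uniformizer upstairs, and $\beta=\pi^{n'}\delta$ has valuation $n'$ in $\hat{L}_\mathfrak{P}(\beta)$.

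The final step is to apply the same valuation argument to the upper extension $\hat{L}_\mathfrak{P}(\beta)\subset\hat{L}_\mathfrak{P}(\alpha)$: here $\alpha^{r'}=\beta$ with $\ord$-value $n'$ and $(r',n')=1$, so divisibility forces $r'\mid e(\mathfrak{P}'/\mathfrak{P}'')$ where $\mathfrak{P}''$ is the prime of $L(\beta)$ below $\mathfrak{P}'$, while the degree bound $e(\mathfrak{P}'/\mathfrak{P}'')\leq r'$ forces equality. By multiplicativity, $e(\mathfrak{P}'/\mathfrak{P})=r'\cdot 1=r/(r,n)$. The main obstacle is the upper bound, and the key technical move is the introduction of $\delta$: bare valuations of $\alpha$ cannot separate the ramified and unramified contributions of $L(\alpha)/L$ when $(r,n)>1$, whereas pulling out the ramified factor $\pi^{n'}$ isolates the unit $u$, which via tameness generates precisely the residue-field (unramified) part of the extension and leaves a clean coprime Kummer step on top.
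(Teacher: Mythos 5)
Your proof is correct, and it takes a genuinely different route from the paper's. The paper argues entirely with ideal arithmetic in $\mathcal{O}_{L'}$: it exhibits a prime $\mathfrak{P}' = \mathfrak{P}\mathcal{O}_{L'} + \sqrt[r]{F}\,\mathcal{O}_{L'}$, expands $(\mathfrak{P}')^e$ binomially, and reads off the smallest $e$ with $(\mathfrak{P}')^e \subset \mathfrak{P}\mathcal{O}_{L'}$ by factoring $F = F_1 F_2^n$ with $F_1$ a $\mathfrak{P}$-unit and $F_2$ a local uniformizer. Tameness is never mentioned in the paper's proof, and the final step --- passing from the displayed containment to the value of $e$ --- is left to the reader, as is the verification that the constructed $\mathfrak{P}'$ is actually prime. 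You instead pass to the completion $\hat{L}_\mathfrak{P}$ and perform the standard decomposition of a tame Kummer extension into an unramified layer followed by a totally, tamely ramified one: setting $d=(r,n)$, $r=dr'$, $n=dn'$, you split the tower at $\beta=\alpha^{r'}$, peel off the unit $\delta=\beta/\pi^{n'}$ with $\delta^d=u$, and use separability of $X^d-u$ modulo $\mathfrak{P}$ (which needs $(d,q)=1$, i.e.\ tameness) together with Hensel to show the lower layer is unramified; the upper layer is then pinned down by the clean coprimality $(r',n')=1$ plus the degree bound. Both proofs ultimately rest on the same arithmetic, but yours makes the tameness hypothesis explicit and isolates exactly where the unramified and ramified contributions come from, which the paper's valuation-free ideal manipulation obscures. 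The paper's approach is more elementary in that it avoids completions, at the cost of a terser and less fully justified conclusion; yours is closer to the textbook treatment of tame Kummer ramification and would survive closer scrutiny at the final step.
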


\begin{proof}

Since $[L':L]=r$, the characteristic polynomial is $Y^r-F(X)$. We can write $F(X)=F_1(X)F_2(X)^n$ where $\ord_{\mathfrak{P}}(F_2(X))=1$ and $(F_1(X)\mathcal{O}_{L},\mathfrak{P})=1$. Then $Y^r - F(X) \equiv Y^r \Mod{\mathfrak{P}}$. Hence,
$$\mathfrak{P}' = \mathfrak{P}\mathcal{O}_{L'} + \sqrt[r]{F(X)}\mathcal{O}_{L'}$$
will be a prime lying over $\mathfrak{P}$.

Now, $e(\mathfrak{P}'/\mathfrak{P})$ will be the smallest integer $e$ such that $(\mathfrak{P}')^e \subset \mathfrak{P}\mathcal{O}_{L'}$. We have that
$$(\mathfrak{P}')^e =  \sum_{j=0}^{e}\mathfrak{P}^{e-j}\left(\sqrt[r]{F(X)}\mathcal{O}_{L'}\right)^j.$$
Now,
$$\sum_{j=0}^{e-1}\mathfrak{P}^{e-j}\left(\sqrt[r]{F(X)}\mathcal{O}_{L'}\right)^j \subset \mathfrak{P}O_{L'}$$
so it remains to determine when $\left(\sqrt[r]{F(X)}\mathcal{O}_{L'}\right)^e\subset \mathfrak{P}O_{L'}$. Finally,
$$\left(\sqrt[r]{F(X)}\mathcal{O}_{l}\right)^e = \left(\sqrt[r]{F_1(X)F_2(X)^n}\mathcal{O}_{L'}\right)^e = \left(\sqrt[\frac{r}{(r,n)}]{F_2(X)^{\frac{n}{(r,n)}}}\sqrt[r]{F_1(X)} \mathcal{O}_{L'}\right)^e$$
and we see that $e(\mathfrak{P}'/\mathfrak{P})=\frac{r}{(r,n)}$.

\end{proof}

\begin{lem}\label{ramlem2}

Let $K \subset L \subset L(\sqrt[r_1]{F_1(X)})=L'\subset L(\sqrt[r_1]{F_1(X)}, \sqrt[r_2]{F_2(X)}) = L''$ be extensions of fields where $F_1,F_2\in\Ff_q[X]$ are $r_1^{th}$ and $r_2^{th}$-power free respectively and $[L':L]=r_1, [L'':L']=r_2$. Let $\mathfrak{P}$ be a prime in $L$ and $\mathfrak{P}''$ be a prime in $L''$ lying above $\mathfrak{P}$. If $\ord_{\mathfrak{P}}(F_1) = n$ and $\ord_{\mathfrak{P}}(F_2)=m$, then $e(\mathfrak{P}''/\mathfrak{P})=\lcm\left(\frac{r_1}{(r_1,n)},\frac{r_2}{(r_2,m)}\right)$

\end{lem}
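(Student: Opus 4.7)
The plan is to use multiplicativity of ramification indices in the tower $L \subset L' \subset L''$, applying Lemma \ref{ramlem1} to each step. Let $\mathfrak{P}'$ denote the prime of $L'$ lying between $\mathfrak{P}$ and $\mathfrak{P}''$. Since $[L':L]=r_1$ and $\ord_{\mathfrak{P}}(F_1)=n$, Lemma \ref{ramlem1} applied to the lower step yields
\[
e(\mathfrak{P}'/\mathfrak{P}) = \frac{r_1}{(r_1,n)}.
\]
For the upper step $L' \subset L'' = L'(\sqrt[r_2]{F_2})$, I first rescale valuations: because $\pi\mathcal{O}_{L'}$ has $\mathfrak{P}'$-part $(\mathfrak{P}')^{e(\mathfrak{P}'/\mathfrak{P})}$ for a uniformizer $\pi$ at $\mathfrak{P}$, we get $\ord_{\mathfrak{P}'}(F_2) = e(\mathfrak{P}'/\mathfrak{P})\cdot m = \frac{r_1 m}{(r_1,n)}$. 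The proof of Lemma \ref{ramlem1} is entirely local at the prime in question and only uses the order of the radicand and the extension degree (here $[L'':L']=r_2$ is given); applying it to this second step gives
\[
e(\mathfrak{P}''/\mathfrak{P}') = \frac{r_2}{\bigl(r_2,\, \tfrac{r_1 m}{(r_1,n)}\bigr)}.
\]
Multiplying the two indices then produces
\[
e(\mathfrak{P}''/\mathfrak{P}) = \frac{r_1}{(r_1,n)} \cdot \frac{r_2}{\bigl(r_2,\, \tfrac{r_1 m}{(r_1,n)}\bigr)}.
\]

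What remains is the purely arithmetic identity
\[
\frac{r_1}{(r_1,n)} \cdot \frac{r_2}{\bigl(r_2,\, \tfrac{r_1 m}{(r_1,n)}\bigr)} \;=\; \lcm\!\left(\frac{r_1}{(r_1,n)},\, \frac{r_2}{(r_2,m)}\right).
\]
I would verify this prime-by-prime: setting $a=v_p(r_1)$, $b=v_p(n)$, $c=v_p(r_2)$, $d=v_p(m)$ for an arbitrary prime $p$, the $p$-adic valuation of the right-hand side is $\max\bigl(\max(0,a-b),\,\max(0,c-d)\bigr)$, and a short case split on whether $c\geq d$ shows the left-hand side has the same valuation. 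The main (mild) obstacle is the bookkeeping of this identity together with the valuation rescaling $\ord_{\mathfrak{P}}\mapsto \ord_{\mathfrak{P}'}$; without inserting the factor $e(\mathfrak{P}'/\mathfrak{P})$ in $\ord_{\mathfrak{P}'}(F_2)$, the final $\lcm$ structure would not emerge. Everything else is a direct invocation of Lemma \ref{ramlem1} and the standard tower formula for ramification indices.
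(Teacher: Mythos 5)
Your proposal is correct and follows essentially the same route as the paper: multiply the ramification indices in the tower $L\subset L'\subset L''$ using Lemma \ref{ramlem1}, rescale $\ord_{\mathfrak{P}}(F_2)$ to $\ord_{\mathfrak{P}'}(F_2)=e(\mathfrak{P}'/\mathfrak{P})\,m$, and then verify the arithmetic identity $\frac{r_1}{(r_1,n)}\cdot\frac{r_2}{(r_2,\,r_1m/(r_1,n))}=\lcm\bigl(\frac{r_1}{(r_1,n)},\frac{r_2}{(r_2,m)}\bigr)$ prime by prime. The paper phrases the final check via the abstract identity $A\cdot B/(B,AC)=\lcm(A,B/(B,C))$ and a short case analysis on exponents, which is exactly the $v_p$ computation you describe.
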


\begin{proof}

Let $\mathfrak{P}'$ be a prime in $L'$ such that $\mathfrak{P}''|\mathfrak{P}'|\mathfrak{P}$, then by Lemma \ref{ramlem1}, $e(\mathfrak{P}'/\mathfrak{P})=\frac{r_1}{(r_1,n)}$. Therefore, $\ord_{\mathfrak{P}'}(F_2)=m\frac{r_1}{(r_1,n)}$ and, again by Lemma \ref{ramlem1}, $e(\mathfrak{P}''/\mathfrak{P}')=\frac{r_2}{(r_2,m\frac{r_1}{(r_1,n)})}$. Hence, $e(\mathfrak{P}''/\mathfrak{P}) = \frac{r_1}{(r_1,n)}\frac{r_2}{(r_2,m\frac{r_1}{(r_1,n)})}$. So it remains to show that this is $\lcm\left(\frac{r_1}{(r_1,n)},\frac{r_2}{(r_2,m)}\right)$.

Let $A,B,C$ be positive integers. We will show that $A\frac{B}{(B,AC)} = \lcm(A,\frac{B}{(B,C)})$. Let $A = \prod p_i^{a_i}$, $B=\prod p_i^{b_i}$, $C= \prod p_i^{c_i}$. Then the left hand and right hand sides are
$$\prod p_i^{a_i + b_i - \min(b_i,a_i+c_i)} \quad \quad \quad \prod p_i^{\max(a_i, b_i - \min(b_i,c_i))}$$
respectively. If $b_i\leq a_i+c_i$, then the left hand exponent becomes $a_i$. Moreover, $b_i\leq c_i$ so the right hand exponent would become $\max(a_i,b_i-c_i)=a_i$ as $a_i \geq b_i-c_i$. If $b_i \geq a_i+c_i$ then the left hand exponent becomes $b_i-c_i$. Further, $b_i\geq c_i$ so then the right hand exponent would become $\max(a_i, b_i-c_i) = b_i-c_i$ as $a_i \leq b_i-c_i$. This completes the proof.

\end{proof}

So, we see that in order to determine the genus, we need to keep track of $\ord_P(F_j)$ for all $P\in\Ff_q[X]$ and  $1\leq j \leq n$. Towards this define the set
$$\R = [0,\dots,r_1-1]\times\dots\times[0,\dots,r_n-1] \setminus\{(0,\dots,0)\}$$
to be the set of integer-valued vectors with $j$ entry between $0$ and $r_j-1$ such that not all entries are $0$. Write an element of $\R$ as $\vec{\alpha}=(\alpha_1,\dots,\alpha_n)$. Then, for every $\vec{\alpha}\in\R$, let
$$f_{\vec{\alpha}} = \prod_{\substack{P\\\ord_P(F_j)=\alpha_j}} P$$
where the product is over all (finite) monic prime polynomials of $\Ff_q[X]$. Then we can write
$$F_j = c_j\prod_{\vec{\alpha}\in\R} f_{\vec{\alpha}}^{\alpha_j}$$
for some $c_j\in \Ff_q^*$ where we use the convention that $f^0$ is identically the constant polynomial $1$.

\begin{prop}\label{ramprop}

If $P|f_{\vec{\alpha}}$ then $e(P)=\underset{j=1,\dots,n}{\lcm}\left(\frac{r_j}{(r_j,\alpha_j)}\right)$

\end{prop}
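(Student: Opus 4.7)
The plan is to prove the formula by induction on $n$, using Lemma \ref{ramlem1} as the base case and the arithmetic identity established inside the proof of Lemma \ref{ramlem2} as the engine of the inductive step. The base case $n=1$ is immediate: if $P \mid f_{\vec{\alpha}}$ with $\vec{\alpha} = (\alpha_1)$, then by construction $\ord_P(F_1) = \alpha_1$, and Lemma \ref{ramlem1} gives $e(P) = \frac{r_1}{(r_1,\alpha_1)}$.

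For the inductive step, assume the result for $n-1$, set
$$L = K(\sqrt[r_1]{F_1},\dots,\sqrt[r_{n-1}]{F_{n-1}}), \qquad L' = L(\sqrt[r_n]{F_n}) = K(C),$$
and let $\mathfrak{P}$ be a prime of $L$ lying over $P$ and $\mathfrak{P}'$ a prime of $L'$ lying over $\mathfrak{P}$. Applying the inductive hypothesis to the intermediate extension,
$$e(\mathfrak{P}/P) = E := \underset{j=1,\dots,n-1}{\lcm}\frac{r_j}{(r_j,\alpha_j)}.$$
Since $\ord_P(F_n) = \alpha_n$, we deduce $\ord_{\mathfrak{P}}(F_n) = E\alpha_n$. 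The hypothesis $\Gal(K(C)/K) = G$ together with Kummer theory forces $[L':L] = r_n$ (and $F_n$ is $r_n^{th}$-power free by construction), so Lemma \ref{ramlem1} applies to the top step and yields $e(\mathfrak{P}'/\mathfrak{P}) = \frac{r_n}{(r_n,E\alpha_n)}$. By multiplicativity of ramification indices in towers,
$$e(P) = e(\mathfrak{P}'/P) = E \cdot \frac{r_n}{(r_n, E\alpha_n)}.$$

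It remains to identify this with $\lcm\!\left(E,\frac{r_n}{(r_n,\alpha_n)}\right)$, which is precisely the elementary identity $A\cdot\frac{B}{(B,AC)} = \lcm\!\left(A,\frac{B}{(B,C)}\right)$ verified prime-by-prime inside the proof of Lemma \ref{ramlem2} (taking $A=E$, $B=r_n$, $C=\alpha_n$). Combining this with the inductive expression for $E$ gives $e(P) = \lcm_{j=1,\dots,n}\frac{r_j}{(r_j,\alpha_j)}$, closing the induction.

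The only real obstacle is the bookkeeping at the top of the tower: one must ensure that $F_n$ is still suitable for applying Lemma \ref{ramlem1} over $L$, i.e.\ that $[L':L] = r_n$. This is not an assertion about $F_n$ as a polynomial over $L$ remaining $r_n^{th}$-power free in any stronger sense, but rather a direct consequence of the assumed Galois group structure $G = \Z/r_1\Z\times\dots\times\Z/r_n\Z$: the degree of each Kummer step is dictated by the corresponding cyclic factor. Modulo this structural check, everything else is formal manipulation of ramification indices and the arithmetic identity already in hand.
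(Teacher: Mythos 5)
Your proof is correct and takes essentially the same approach as the paper: the paper's proof is just the terse statement ``recursively apply Lemma~\ref{ramlem2},'' and your explicit induction --- Lemma~\ref{ramlem1} at each step, multiplicativity of ramification indices in towers, and the arithmetic identity $A\frac{B}{(B,AC)}=\lcm\bigl(A,\frac{B}{(B,C)}\bigr)$ from Lemma~\ref{ramlem2}'s proof --- is precisely the unrolling of that recursion (Lemma~\ref{ramlem2} itself is the two-step case of this). Your side remark that $[L':L]=r_n$ needs to be checked, and that this follows from the assumed Galois group structure via Kummer theory, is the right observation and the only place where any care is required.
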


\begin{proof}

If $P|f_{\vec{\alpha}}$ then $\ord_P(F_j)=\alpha_j$ for all $j$. Thus if we recursively apply Lemma \ref{ramlem2}, we get the result.

\end{proof}

If $P_{\infty}$ is the prime at infinity, then we see that $\ord_{P_{\infty}}(F) = \deg(F)$. Therefore, if $\deg(F_j)=d_j$,
$$e(P_{\infty}) = \underset{j=1,\dots,n}{\lcm}\left(\frac{r_j}{(r_j,d_j)}\right).$$

Therefore, we can rewrite \eqref{genform5} as
\begin{align}\label{genform6}
2g+2|G|-2 =  \sum_{\vec{\alpha}\in\R} \left(|G|-\frac{|G|}{e(\vec{\alpha})}\right)\deg(f_{\vec{\alpha}}) + |G|-\frac{|G|} {e(\vec{d})}
\end{align}
where $\vec{d} = (d_1,\dots,d_n)$ and for any $\vec{v}=(v_1,\dots,v_n)$,
$$e(\vec{v}) = \underset{j=1,\dots,n}{\lcm}\left(\frac{r_j}{(r_j,v_j)}\right). $$

Thus, what we want to keep track of is $\deg(f_{\vec{\alpha}})$. Hence, we will let $d(\vec{\alpha})$ be a non-negative integer for all $\vec{\alpha}\in\R$ and
$$\vec{d}(\vec{\alpha}) = (d(\vec{\alpha})_{\vec{\alpha}\in\R}$$
be a vector indexed by the elements of $\vec{\alpha}\in\R$. Moreover, for every $\vec{d}(\vec{\alpha})$ define
$$d_j :=\sum_{\vec{\alpha}\in\R} \alpha_j d(\vec{\alpha})$$
for $j=1,\dots,n$.

Define the sets
$$\F_d = \{f : f, \mbox{ monic, squarefree and } \deg(f)=d\}$$
$$\F_{\vec{d}(\vec{\alpha})} = \{(f_{\vec{\alpha}})\in \prod_{\vec{\alpha}\in\R} \F_{d(\vec{\alpha})}: (f_{\vec{\alpha}},f_{\vec{\beta}})=1 \mbox{ for all } \vec{\alpha}\not=\vec{\beta} \}.$$
That is, the set of monic, square-free and pairwise coprime tuples of polynomials with prescribed degrees.

Consider $\vec{d}(\vec{\alpha})$ such that $d_j \equiv 0 \mod{r_j}$ for $j=1,\dots,n$, then for $(f_{\vec{\alpha}})\in\F_{\vec{d}(\vec{\alpha})}$,  the right side of \eqref{genform6} becomes
$$\sum_{\vec{\alpha}\in\R} \left(|G|-\frac{|G|}{e(\vec{\alpha})}\right)d(\vec{\alpha}).$$

Now, consider $\vec{d}(\vec{\alpha})$ such that $d_j \equiv r_j-\beta_j \mod{r_j}$ for some $\vec{\beta}\in\R$. Define $d'(\vec{\beta})=d(\vec{\beta})+1$ and $d'(\vec{\alpha})=d(\vec{\alpha})$ for $\vec{\alpha}\not=\vec{\beta}$ and $\vec{d}'(\vec{\alpha}) = (d'(\vec{\alpha}))$. Then
$$d'_j :=\sum_{\vec{\alpha}\in\R}\alpha_jd'(\vec{\alpha}) \equiv 0 \mod{r_j}.$$

This motivates define the set
$$\F^{\vec{\beta}}_{\vec{d}(\vec{\alpha})} = \{(f_{\vec{\beta}},(f_{\vec{\alpha}}))\in \F_{d(\vec{\beta})-1} \times \prod_{\substack{\vec{\alpha}\in\R \\ \vec{\alpha}\not=\vec{\beta}}} \F_{d(\vec{\alpha})}: (f_{\vec{\alpha}},f_{\vec{\gamma}})=1 \mbox{ for all } \vec{\alpha},\vec{\gamma}\in\R   \}.$$
This set is the same as the previous set except that the degree is dropped by $1$ in the $\vec{\beta}^{th}$-coordinate.

Therefore, by the above argument we get that any tuple $(f_{\vec{\alpha}})$ lives in a unique $\F^{\vec{\beta}}_{\vec{d}(\vec{\alpha})}$ such that $d_j\equiv 0 \mod{r_j}$.

Hence if we define the set
$$\F_{[\vec{d}(\vec{\alpha})]} = \F_{\vec{d}(\vec{\alpha})} \cup \bigcup_{\vec{\beta}\in\R} \F^{\vec{\beta}}_{\vec{d}(\vec{\alpha})}$$
then as $\vec{d}(\vec{\alpha})$ runs over all vectors such that $d_j\equiv 0 \mod{r_j}$, we get that the set $\F_{[\vec{d}(\vec{\alpha})]}$ runs over all tuples. Therefore, from now on we will always be assuming $d_j\equiv 0 \mod{r_j}$, $j=1,\dots,n$.

Moreover, the genus of the curves corresponding to the tuples in $\F_{[\vec{d}(\vec{\alpha})]}$ is invariant. Indeed, if $(f_{\vec{\alpha}})\in\F_{\vec{d}(\vec{\alpha})}$, then we get that the genus, $g$, satisfies
$$2g+2|G|-2 = \sum_{\vec{\alpha}\in\R} \left(|G|-\frac{|G|}{e(\vec{\alpha})}\right)d(\vec{\alpha}).$$
Further, if $(f_{\vec{\alpha}})\in\F^{\vec{\beta}}_{\vec{d}(\vec{\alpha})}$ the genus, $g'$, satisfies
\begin{align*}
2g'+2|G|-2 & = \sum_{\substack{\vec{\alpha}\in\R \\ \vec{\alpha}\not=\vec{\beta}} } \left(|G|-\frac{|G|}{e(\vec{\alpha})}\right)d(\vec{\alpha}) + \left(|G|-\frac{|G|}{e(\vec{\alpha})}\right)(d(\vec{\beta})-1) + |G|-\frac{|G|}{e(\vec{d})} \\
& = \sum_{\vec{\alpha}\in\R} \left(|G|-\frac{|G|}{e(\vec{\alpha})}\right)d(\vec{\alpha})\\
& = 2g+2|G|-2.
\end{align*}

Now, we need to add information about the leading coefficients, so define
$$\hat{\F}_{\vec{d}(\vec{\alpha})} = (\Ff_q^*)^n\times\F_{\vec{d}(\vec{\alpha})}$$
$$\hat{\F}^{\vec{\beta}}_{\vec{d}(\vec{\alpha})} =(\Ff_q^*)^n\times \F^{\vec{\beta}}_{\vec{d}(\vec{\alpha})}$$
$$\hat{\F}_{[\vec{d}(\vec{\alpha})]} = (\Ff_q^*)^n\times\F_{[\vec{d}(\vec{\alpha})]}$$

Every element of $\hat{\F}_{[\vec{d}(\vec{\alpha})]}$ corresponds to a curve and every curve corresponds to an element of $\hat{\F}_{[\vec{d}(\vec{\alpha})]}$. With that being said, we define
$$\Hh^{\vec{d}(\vec{\alpha})} = \{C : C \mbox{ corresponds to an element of } \hat{\F}_{[\vec{d}(\vec{\alpha})]} \}$$
and we get
$$\Hh_{G,g} = \bigcup \Hh^{\vec{d}(\vec{\alpha})}$$
where the union is over all $\vec{d}(\vec{\alpha})$ that satisfy
$$2g+2|G|-2 = \sum_{\vec{\alpha}\in\R} \left(|G|-\frac{|G|}{e(\vec{\alpha})}\right) d(\vec{\alpha})$$
$$\sum_{\vec{\alpha}\in\R} \alpha_j d(\vec{\alpha}) \equiv 0 \mod{r_j}, j=1,\dots,n.$$


\section{Number of Points on the Curve}\label{abelnumpts}

In this section, we will find a formula for the number of points on a curve in $\Hh^{\vec{d}(\vec{\alpha})}$. To begin, we will determine a formula for the number of points lying above $x$ for all $x\in\Pp^1(\Ff_q)$. In order to do this, however, we need a \textit{smooth}, affine model of our curve at $x$.

We can view $K(C)$ as a vector space over $K$ with dimension $|G|$.  Let
$$\mathscr{B} = \{B_1,\dots,B_{|G|}\}$$
be a basis of $K(C)$ over $K$. Since $q\equiv 1 \Mod{\exp(G)}$, by Kummer Theory, we can assume that for all $B_i\in\mathscr{B}$, there exists an $m_i\in\Z_{>0}$ and $H_i\in \Ff_g[X]$ such that $H_i$ is $m_i^{th}$-power free and $B_i = \sqrt[m_i]{H_i}$. Now, if $x\in\Pp^1(\Ff_q)$, then we can find $H_{j_1}, \dots, H_{j_n} $ such that the smooth affine model of $C$ at $x$ is of the form
$$Y_1^{m_{j_1}} =H_{j_1}(X) \quad \quad Y_2^{m_{j_2}} = H_{j_2}(X) \quad \quad \dots \quad \quad Y_n^{m_{j_n}} = H_{j_n}(X)$$

Since $x$ is smooth in this model, at most one of the $H_{j_k}$ may have a root at $x$ of order at most $1$. Therefore, we see that the number of points lying over $x$ will be
$$\begin{cases} m_{j_1}m_{j_2}\cdots m_{j_n} & H_{j_i}(x)\in (\Ff_q^*)^{m_{j_i}}, i=1,\dots,n  \\ \frac{m_{j_1}m_{j_2}\cdots m_{j_n}}{m_{j_k}} & H_{j_k}(x)=0,  H_{j_i}(x)\in (\Ff_q^*)^{m_{j_i}}, i=1,\dots,n, i\not=k \\ 0 & \mbox{otherwise}\end{cases}$$

If we let $\chi_m:\Ff_q^* \to \mu_m$ be a multiplicative character of order $m$, and extend it to all of $\Ff_q$ by setting $\chi_m(0)=0$ then we see that we can write the number of points lying over $x$ as
$$\prod_{k=1}^n\left(1+ \sum_{i=1}^{m_{j_k}-1} \chi^i_{m_{j_k}}\left(H_{j_k}(x)\right)\right).$$

Let $B_i \not \in K(B_{j_1},\dots,B_{j_n})$. Then I claim that $H_i(x)=0$. Indeed, consider the smooth projective curve $C'$ such that $K(C')= K(B_{j_1},\dots,B_{j_n},B_i)$. Then $C'$ will have an affine model of the form
$$ Y_s^{m_i} = H_i(X)$$
$$ Y_k^{m_{j_k}} = H_{j_k}(X),  1\leq k \leq n, k\not=s.$$
That is $H_i$ will replace $H_{j_s}$ for some $1\leq s \leq n$.

Moreover, this affine model is not smooth at $x$ by our choices of $H_{j_1},\dots,H_{j_n}$. Therefore, one of four things may happen:

\begin{enumerate}
\item $H_{j_k}(X)$ is divisible by $(X-x)^2$ for some $1 \leq k \leq n$, $k\not=s$
\item $H_i(X)$ is divisible by $(X-x)^2$
\item $H_{j_k}(x)=H_{j_k'}(x)=0$ for some $1\leq k < k' \leq n$, $k,k'\not=s$.
\item $H_{j_k}(x) = H_i(x)=0$ for some $1\leq k \leq n$, $k\not=s$.
\end{enumerate}
Case one and three can't happen because this would imply our original model was not smooth at $x$. Therefore, case two or four must happen and in both of these cases $H_i(x)=0$

Hence, the number of points lying over $x$ is
$$\prod_{k=1}^n \left(1+ \sum_{i=1}^{m_{j_k}-1} \chi^i_{m_{j_k}}\left(H_{j_k}(x)\right)\right) = \sum_{j=1}^{|G|} \chi_{m_j}(H_j^{m_j}(x))$$
as all the terms appearing on the right hand side that don't appear on the left hand side are $0$.

Let $C\in\Hh^{\vec{d}(\vec{\alpha})}$ such that $C$ corresponds to $(\vec{c},(f_{\vec{\alpha}}))\in\hat{\F}_{[\vec{d} (\vec{\alpha})]}$. To use the discussion above, we want to find a basis for $K(C)$ over $K$ such that each element in the basis is an $m^{th}$ root of an $m^{th}$-powerfree polynomial. Towards this, define
$$\s = \{ \vec{s} = (s_1,\dots,s_n) : s_j|r_j\},$$
the set of vectors whose $j^{th}$ component divides $r_j$. For all $\vec{s}\in\s$ define
$$\ell(\vec{s}) = \lcm(s_1,\dots,s_n)$$
$$\Omega_{\vec{s}} = \{ \vec{\omega} = (\omega_1,\dots,\omega_n) : 1 \leq \omega_j \leq s_j, (\omega_j,s_j)=1\} \subset \R.$$
For any $\vec{s}\in\s$, $\vec{\omega}\in\Omega_{\vec{s}}$, and $(\vec{c},(f_{\vec{\alpha}}))\in\hat{\F}_{\vec{d}(\vec{\alpha})}$ define
$$F_{(\vec{s})}^{(\vec{\omega})}(X) := c_{(\vec{s})}^{(\vec{\omega})} \prod_{\vec{\alpha}\in\R} f_{\vec{\alpha}}(X)^{ \sum_{j=1}^n \frac{\ell(\vec{s})}{s_j} \omega_j\alpha_j \Mod{\ell(\vec{s})} }$$
$$c_{(\vec{s})}^{(\vec{\omega})} := \prod_{j=1}^n c_j^{\frac{\ell(s)}{s_j}\omega_j \Mod{\ell(\vec{s})}}.$$
When we write in the exponent $* \Mod{\ell(\vec{s})}$, we mean the smallest, non-negative integer that is congruent to $*$ modulo $\ell(\vec{s})$. Moreover, we make the identification that $f_{\vec{\alpha}}(X)^{0}$ is identically the constant polynomial $1$. Hence, if $\sum_{j=1}^n \frac{\ell(\vec{s})}{s_j} \omega_j\alpha_j \equiv 0 \Mod{\ell(\vec{s})} $, then $f_{\vec{\alpha}}(X)$ does not divide $F_{(\vec{s})}^{(\vec{\omega})}(X)$. In particular, if $\vec{s}=(1,\dots,1)$, then $\Omega_{\vec{s}} = \{(1,\dots,1)\}$ and we make the identification
$$F_{(1,\dots,1)}^{(1,\dots,1)}(X) =1, c_{(1,\dots,1)}^{(1,\dots,1)} =1$$

Therefore, we see that a basis for $K(C)$ over $K$ can be given by
$$\mathscr{B} = \left\{ \left(F_{(\vec{s})}^{(\vec{\omega})}(X)\right)^{\frac{1} {\ell(\vec{s})}}, \vec{s}\in\s, \vec{\omega}\in\Omega_{\vec{s}}   \right\}$$
This basis has the required property and hence the number of points lying over any $x\in\Ff_q$ can be written as
$$\sum_{\vec{s}\in\s} \sum_{\vec{\omega}\in\Omega_{\vec{s}}} \chi_{\ell(\vec{s})}\left(F_{(\vec{s})}^{(\vec{\omega})}(x) \right).$$

This leads to following lemma.

\begin{lem}\label{abelaffnumpts}

Let $C\in\Hh^{\vec{d}(\vec{\alpha})}$ that corresponds to $(\vec{c},(f_{\vec{\alpha}}))\in\hat{\F}_{[\vec{d}(\vec{\alpha})]}$. Then the number of affine points on the curve is
$$\#C(\Ff_q) = \sum_{x\in\Ff_q} \sum_{\vec{s}\in\s} \sum_{\vec{\omega}\in\Omega_{\vec{s}}} \chi_{\ell(\vec{s})}\left( F_{(\vec{s})}^{(\vec{\omega})}(x) \right).$$

\end{lem}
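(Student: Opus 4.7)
The plan is to convert the local point count at each $x \in \Ff_q$, which was essentially derived in the paragraphs leading up to the lemma, into a global count by summing over $x$. Three ingredients must be verified cleanly: that $\mathscr{B}$ really is a $K$-basis of $K(C)$; that each element of $\mathscr{B}$ has the form $\sqrt[m]{H}$ with $H$ an $m^{th}$-power-free polynomial (so the pointwise character-sum formula applies); and that the local formula glues into a clean global identity.

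First, I would check the cardinality of $\mathscr{B}$. We have $|\Omega_{\vec{s}}| = \prod_{j=1}^n \phi(s_j)$, so
\begin{equation*}
|\mathscr{B}| = \sum_{\vec{s}\in\s}\prod_{j=1}^n \phi(s_j) = \prod_{j=1}^n \sum_{s_j|r_j}\phi(s_j) = \prod_{j=1}^n r_j = |G|,
\end{equation*}
matching $[K(C):K]$. Linear independence follows from Kummer theory since the pairs $(\vec{s},\vec\omega)$ index, via $\vec\omega\mapsto (\omega_1/s_1,\dots,\omega_n/s_n)\in (\mathbb{Q}/\Z)^n$, the full character group $\widehat{G}$, and distinct characters give distinct Kummer generators.

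Next, I would verify that $F_{(\vec{s})}^{(\vec\omega)}$ is $\ell(\vec{s})^{th}$-power-free; this is built into its definition, since each $f_{\vec\alpha}$ occurs with exponent $\sum_j \frac{\ell(\vec{s})}{s_j}\omega_j\alpha_j \Mod{\ell(\vec{s})}$. Expanding $\bigl(\prod_j F_j^{\omega_j/s_j}\bigr)^{\ell(\vec{s})}$ via $F_j = c_j \prod_{\vec\alpha}f_{\vec\alpha}^{\alpha_j}$ shows that this equals $F_{(\vec{s})}^{(\vec\omega)}$ up to an $\ell(\vec{s})^{th}$ power in $K^\ast$ (the discrepancy between the raw integer exponent and its residue modulo $\ell(\vec{s})$ gets absorbed as $\ell(\vec{s})^{th}$ powers of $f_{\vec\alpha}$ and $c_j$). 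Hence $(F_{(\vec{s})}^{(\vec\omega)})^{1/\ell(\vec{s})}$ lies in $K(C)$ and is an $\ell(\vec{s})^{th}$ root of an $\ell(\vec{s})^{th}$-power-free polynomial, as required to invoke the derivation preceding the lemma.

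Finally, for each $x\in\Ff_q$ the discussion before the lemma yields
\begin{equation*}
\#\{\text{points of } C \text{ over } x\} = \sum_{\vec{s}\in\s}\sum_{\vec\omega\in\Omega_{\vec{s}}}\chi_{\ell(\vec{s})}\!\left(F_{(\vec{s})}^{(\vec\omega)}(x)\right),
\end{equation*}
where the collapsing argument (that $H_i(x)=0$ whenever $B_i\notin K(B_{j_1},\dots,B_{j_n})$) ensures that the extra terms on the character-sum side contribute $0$ and match the true point count. Summing over $x\in\Ff_q$ gives the lemma. The one place where care is needed is the smoothness hypothesis: the local derivation assumed a smooth affine model at $x$, but one must argue that the terms $\chi_{\ell(\vec{s})}(F_{(\vec{s})}^{(\vec\omega)}(x))$ automatically vanish or combine correctly when the chosen model is non-smooth at $x$. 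This is the main obstacle; it is handled by noting that the basis $\mathscr{B}$ is global (independent of $x$), so choosing different affine models at different points only permutes which $H_{j_k}$ play the role of the local uniformizers, without altering the global character sum.
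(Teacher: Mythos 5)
Your proposal is correct and follows essentially the same route as the paper: you take the local point-count formula derived immediately before the lemma (the product expansion over the local smooth model, plus the collapsing argument that basis elements outside the local model satisfy $H_i(x)=0$ and thus contribute $0$) and sum over $x\in\Ff_q$. Your added verifications — that $|\mathscr{B}| = \sum_{\vec{s}}\prod_j \phi(s_j) = |G|$, that $\vec\omega \mapsto (\omega_j/s_j)$ indexes $\widehat{G}$, and that $F_{(\vec{s})}^{(\vec\omega)}$ is $\ell(\vec{s})^{\mathrm{th}}$-power-free (which also relies on the $f_{\vec\alpha}$ being squarefree and pairwise coprime, not only on the exponent reduction) — are omitted in the paper, which simply asserts $\mathscr{B}$ is a basis with the required property. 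Your closing caveat about a ``non-smooth model at $x$'' is phrased a bit misleadingly: the paper always chooses a smooth affine model at each $x$; the only thing to check is that the terms indexed by basis elements not in that local model vanish, which is precisely the collapsing argument you already cited, so there is no residual gap.
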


It remains to determine what happens at the point at infinity, $x_{q+1}$. For any $F(X)\in\Ff_q[X]$, let $\tilde{F}(X)$ denote the polynomial that inverts the order of the coefficients of $F(X)$. That is, if
$$F(X)= a_0+a_1X+\dots+a_dX^d,$$
then
$$\tilde{F}(X) = a_0X^d+a_1X^{d-1}+\dots+a_d.$$
Further, if we let $X' = 1/X$, then we have $F(X) = (X')^{-d}\tilde{F}(X')$, where $d=\deg(F)$. Hence to determine what happens at $x_{q+1}$, we need to determine what happens when $X'=0$ for the curve
$$Y_j^{r_j} = (X')^{-d_j} \tilde{F}_j(X'), j=1,\dots,n.$$
If we write $d_j = r_jm_j+k_j$ with $1\leq k_j \leq r_j$, and let $Y'_j = Y_j(X')^{m_j+1}$, then we have an isomorphism to the curve
$$(Y'_j)^{r_j} = (X')^{r_j-k_j} \tilde{F}_j(X'), j=1,\dots,n.$$
So, we see we get a root at $x_{q+1}$ if and only if $k_j\not=r_j$ if and only if $d_j\not\equiv 0 \Mod{r_j}$. Therefore, we can write
$$F_j(x_{q+1}) = \begin{cases} c_j & d_j \equiv 0 \Mod{r_j} \\ 0 & d_j \not \equiv 0 \Mod{r_j}  \end{cases}$$
Likewise, we see that
$$ F_{(\vec{s})}^{(\vec{\omega})}(x_{q+1}) = \begin{cases} c_{(\vec{s})}^{(\vec{\omega})} & \sum_{j=1}^n \frac{\ell(\vec{s})}{s_j}\omega_jd_j \equiv 0 \Mod{\ell(\vec{s})} \\ 0 & \sum_{j=1}^n \frac{\ell(\vec{s})}{s_j}\omega_jd_j \not\equiv 0 \Mod{\ell(\vec{s})}. \end{cases}$$

Thus the number of points lying over $x_{q+1}$ is
$$\sum_{\vec{s}\in\s} \sum_{\vec{\omega}\in\Omega_{\vec{s}}} \chi_{\ell(\vec{s})}\left(F_{(\vec{s})}^{(\vec{\omega})}(x_{q+1}) \right)$$
and we get the following lemma.

\begin{lem}\label{abelprojnumpts}

Let $C\in\Hh^{\vec{d}(\vec{\alpha})}$ that corresponds to $(\vec{c},(f_{\vec{\alpha}}))\in\hat{\F}_{[\vec{d}(\vec{\alpha})]}$. Then the number of projective points on the curve is
$$\#C(\Pp^1(\Ff_q)) = \sum_{x\in\Pp^1(\Ff_q)} \sum_{\vec{s}\in\s} \sum_{\vec{\omega}\in\Omega_{\vec{s}}} \chi_{\ell(\vec{s})}\left( F_{(\vec{s})}^{(\vec{\omega})}(x) \right).$$

\end{lem}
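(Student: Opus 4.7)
The plan is to combine Lemma \ref{abelaffnumpts} with the computation of the fiber over the point at infinity $x_{q+1}$ that was carried out in the paragraphs immediately preceding the lemma statement. Writing
\[
\#C(\Pp^1(\Ff_q)) = \#C(\Ff_q) + \#\{\text{points of } C \text{ lying over } x_{q+1}\},
\]
Lemma \ref{abelaffnumpts} already supplies the contribution of $x \in \Ff_q$, so the only real task is to identify the $x_{q+1}$-term in the target formula with the number of geometric points lying over $x_{q+1}$.

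First I would isolate what was established just above the lemma: after the change of coordinate $X' = 1/X$ and the substitution $Y'_j = Y_j (X')^{m_j+1}$ with $d_j = r_j m_j + k_j$, the smooth affine model at $x_{q+1}$ is $(Y'_j)^{r_j} = (X')^{r_j - k_j}\tilde{F}_j(X')$, so whether the polynomial on the right vanishes at $X'=0$ is governed solely by the congruence $d_j \Mod{r_j}$. Applying the same Kummer-theoretic basis argument used for finite $x$, but now in the local ring at $x_{q+1}$, the number of points lying over $x_{q+1}$ equals
\[
\sum_{\vec{s}\in\s}\sum_{\vec{\omega}\in\Omega_{\vec{s}}} \chi_{\ell(\vec{s})}\!\left(F_{(\vec{s})}^{(\vec{\omega})}(x_{q+1})\right),
\]
once we agree to interpret $F_{(\vec{s})}^{(\vec{\omega})}(x_{q+1})$ as $c_{(\vec{s})}^{(\vec{\omega})}$ or $0$ according to whether $\sum_j \tfrac{\ell(\vec{s})}{s_j}\omega_j d_j \equiv 0 \Mod{\ell(\vec{s})}$ or not, exactly the case analysis carried out just before the lemma.

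To verify that this really gives the right point count at infinity, I would re-run the argument that led to the finite-place formula, noting the two mechanisms by which $\chi_{\ell(\vec{s})}(F_{(\vec{s})}^{(\vec{\omega})}(x_{q+1}))$ can fail to contribute: either the character of a nonzero leading-coefficient-type constant is not trivial, or the polynomial picks up a factor of $X'$ at infinity (equivalently, a congruence fails), in which case the extended convention $\chi_{\ell(\vec{s})}(0) = 0$ zeros out the term. In both cases this exactly matches the geometric fact that the projective fiber over $x_{q+1}$ is either a full set of $|G|/e(\vec{d})$ points, a ramified fiber, or contributes no rational points, so that the character sum expression is the correct count.

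The main obstacle is purely bookkeeping: one needs to check that the multiplicative-character recipe derived in the smooth affine model at a finite point $x$ still captures the contribution at $x_{q+1}$ after the change of variables, which amounts to observing that the basis $\mathscr{B}$ behaves correctly under $X \mapsto 1/X$ (the valuation of $F_j$ at infinity is $d_j$, and $d_j \Mod{r_j}$ determines ramification at infinity just as $\alpha_j$ did at a finite prime $P \mid f_{\vec{\alpha}}$). Once this is in place, adding the affine formula of Lemma \ref{abelaffnumpts} to the infinite-place formula and combining the outer sums over $x\in\Ff_q$ and $\{x_{q+1}\}$ into a single sum over $x \in \Pp^1(\Ff_q)$ yields the claimed identity.
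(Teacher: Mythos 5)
Your proposal follows exactly the paper's route: split the count into the affine part (Lemma \ref{abelaffnumpts}) plus the fiber over $x_{q+1}$, perform the change of variables $X' = 1/X$ to move the smooth-model and character-sum argument to infinity, and recognize that $F_{(\vec{s})}^{(\vec{\omega})}(x_{q+1})$ equals $c_{(\vec{s})}^{(\vec{\omega})}$ or $0$ according to the congruence $\sum_j \tfrac{\ell(\vec{s})}{s_j}\omega_j d_j \equiv 0 \Mod{\ell(\vec{s})}$. This is correct and matches the paper's own derivation in the paragraphs immediately preceding the lemma.
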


\begin{rem}

As we stated above, if $\vec{s}=(1,\dots,1)$, then $\Omega_{\vec{s}}=\{1,\dots,1)\}$ and $F_{(1,\dots,1)}^{(1,\dots,1)}(X)=1$. Hence
$$\sum_{x\in\Pp^1(\Ff_q)} \sum_{\vec{\omega}\in\Omega_{(1,\dots,1)}} \chi_{\ell(1,\dots,1)}\left( F_{(1,\dots,1)}^{(\vec{\omega})}(x) \right) = \sum_{x\in\Pp^1(\Ff_q)}1 = q+1$$
Thus,
$$\#C(\Pp^1(\Ff_q)) = q+1 + \sum_{x\in\Pp^1(\Ff_q)} \sum_{\substack{\vec{s}\in\s \\ \vec{s}\not=(1,\dots,1)}} \sum_{\vec{\omega}\in\Omega_{\vec{s}}} \chi_{\ell(\vec{s})}\left( F_{(\vec{s})}^{(\vec{\omega})}(x) \right)$$
and we get that
$$\Tr(\Frob_q) = - \sum_{x\in\Pp^1(\Ff_q)} \sum_{\substack{\vec{s}\in\s \\ \vec{s}\not=(1,\dots,1)}} \sum_{\vec{\omega}\in\Omega_{\vec{s}}} \chi_{\ell(\vec{s})}\left( F_{(\vec{s})}^{(\vec{\omega})}(x) \right)$$

\end{rem}


\section{Admissibility}\label{admis}

From now on, we fix an ordering of the elements of $\Ff_q = \{x_1,\dots,x_q\}$ and let $x_{q+1}$ denote the point at infinity of $\Pp^1(\Ff_q)$, then we have reduced the problem down to determine the size of the set
\begin{align}\label{set}
\{(\vec{c},(f_{\vec{\alpha}}))\in \hat{\F}_{[\vec{d}(\vec{\alpha})]} :  \chi_{\ell(\vec{s})}\left( F_{(\vec{s})}^{(\vec{\omega})}(x_i) \right) = \epsilon_{\vec{s},\vec{\omega},i}, \vec{s}\in\s, \vec{\omega}\in \Omega_{\vec{s}}, i=1,\dots,\ell\}
\end{align}
for some choices of $\epsilon_{\vec{s},\vec{\omega},i}\in \mu_{\ell(\vec{s})} \cup \{0\}$ and $\ell= q+1$. In fact, we will need to determine this for $\ell=0$ as well as $\ell=q+1$ in order to determine the probability. However, we will determine it for an arbitrary $\ell$.

Clearly, not all choices give a non-empty set as the polynomials $F_{(\vec{s})}^{(\vec{\omega})}$ are highly dependent on each other. This section will be devoted to determining properties of the choices of $\epsilon_{\vec{s},\vec{\omega}}$ that give a non-empty set.

\begin{defn}\label{abeladmisdef}

A set
$$\{\epsilon_{\vec{s},\vec{\omega}} \in \mu_{\ell(\vec{s})} \cup \{0\}, \vec{s}\in\s, \vec{\omega}\in \Omega_{\vec{s}}\}$$
is called \textbf{admissible} if there exists $(\vec{c},(f_{\vec{\alpha}})) \in \hat{\F}_{[\vec{d}(\vec{\alpha})]}$ and an $x\in\Pp^1(\Ff_q)$ such that
$$ \epsilon_{\vec{s},\vec{\omega}} = \chi_{\ell(\vec{s})} (F_{(\vec{s})}^{(\vec{\omega})}(x))$$
for all $\vec{s}\in\s$, $\vec{\omega}\in\Omega_{\vec{s}}$. (Note that $\epsilon_{(1,\dots,1),(1,\dots,1)}=1$.)

\end{defn}

Clearly, therefore, \eqref{set} will be non-empty if and only if the set
$$\{\epsilon_{\vec{s},\vec{\omega},i}\in\mu_{\ell(\vec{s})}\cup\{0\}, \vec{s}\in\s, \vec{\omega}\in\Omega_{\vec{s}}\}$$
is admissible for all $i$ and distinct $x_i$.

\begin{lem}\label{admis1}

For all $\vec{s}\in \s$, $\vec{\omega}\in\Omega_{\vec{s}}$ and $p|r_n$, prime, define
$$\vec{s}_p = (p^{v_p(s_1)},\dots,p^{v_p(s_n)})$$
$$\vec{\omega}_p = (\omega_1 \Mod{p^{v_p(s_1)}},\dots,\omega_n \Mod{p^{v_p(s_n)}}) \in\Omega_{\vec{s}_p}.$$
Let $m_p$ be the smallest, non-negative integer such that $m_p \equiv \ell(\vec{s}_p)^{-1} \Mod{\frac{\ell(\vec{s})}{\ell(\vec{s}_p)}}$. If $\{\epsilon_{\vec{s},\vec{\omega}}: \vec{s}\in\s, \vec{\omega}\in\Omega_{\vec{s}}\}$ is admissible then
$$\epsilon_{\vec{s},\vec{\omega}} = \prod_{p|r_n} \epsilon_{\vec{s}_p,\vec{\omega}_p}^{m_p}$$
\end{lem}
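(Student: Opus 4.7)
The plan is to reduce the statement to a character-theoretic identity that follows from the Chinese Remainder Theorem. First, since $\{\epsilon_{\vec{s},\vec{\omega}}\}$ is admissible, we may fix some $(\vec{c},(f_{\vec{\alpha}}))\in\hat{\F}_{[\vec{d}(\vec{\alpha})]}$ and $x\in\Pp^1(\Ff_q)$ realizing all the $\epsilon$'s simultaneously, so that $\epsilon_{\vec{s},\vec{\omega}}=\chi_{\ell(\vec{s})}(F_{(\vec{s})}^{(\vec{\omega})}(x))$ and also $\epsilon_{\vec{s}_p,\vec{\omega}_p}=\chi_{\ell(\vec{s}_p)}(F_{(\vec{s}_p)}^{(\vec{\omega}_p)}(x))$ using the very same data (the sub-tuples $\vec{s}_p,\vec{\omega}_p$ only depend on $\vec{s},\vec{\omega}$, not on a new choice of point or curve).

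Next I would fix a compatible family of characters by setting $\chi_m=\chi_{r_n}^{r_n/m}$ for every $m\mid r_n$, which ensures that $\chi_{\ell(\vec{s}_p)}(y)=\chi_{\ell(\vec{s})}(y)^{\ell(\vec{s})/\ell(\vec{s}_p)}$ for $y\in\Ff_q^*$. Because $\ell(\vec{s})=\prod_{p\mid r_n}\ell(\vec{s}_p)$ is a pairwise coprime factorization, the CRT gives an isomorphism $\mu_{\ell(\vec{s})}\simeq\prod_p\mu_{\ell(\vec{s}_p)}$, and $\chi_{\ell(\vec{s})}$ can be reassembled from the $\chi_{\ell(\vec{s}_p)}$ via the corresponding idempotents; the defining congruence $m_p\equiv \ell(\vec{s}_p)^{-1}\Mod{\ell(\vec{s})/\ell(\vec{s}_p)}$ is precisely the one which picks out the correct idempotent weight.

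Applying this to $y=F_{(\vec{s})}^{(\vec{\omega})}(x)$ reduces the lemma to verifying the multiplicative congruence
$$F_{(\vec{s})}^{(\vec{\omega})}(x)\equiv\prod_{p\mid r_n} F_{(\vec{s}_p)}^{(\vec{\omega}_p)}(x)^{\,m_p\,\ell(\vec{s})/\ell(\vec{s}_p)}\pmod{(\Ff_q^*)^{\ell(\vec{s})}}.$$
Since both sides are monomials in $c_1,\dots,c_n$ and the $f_{\vec{\alpha}}(x)$, it suffices to check congruence of exponents modulo $\ell(\vec{s})$. By CRT this in turn splits, for each prime $p\mid r_n$, into the single congruence modulo $\ell(\vec{s}_p)$. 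Writing $s_j=p^{v_p(s_j)}s_j'$ with $(s_j',p)=1$ and $\ell(\vec{s})=\ell(\vec{s}_p)\cdot\ell(\vec{s})'$, the factorization $\ell(\vec{s})/s_j=(\ell(\vec{s}_p)/p^{v_p(s_j)})(\ell(\vec{s})'/s_j')$ lets me isolate the $p$-part on the left, while on the right only the $p$-indexed term survives modulo $\ell(\vec{s}_p)$. The identification is then forced by the defining relation of $m_p$ together with the observation that $\omega_j\equiv\omega_{p,j}\Mod{p^{v_p(s_j)}}$ and that $\omega_j$ is coprime to $p$ whenever $v_p(s_j)\geq 1$.

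The main obstacle will be the bookkeeping of the ``mod $\ell(\vec{s})$'' reduction appearing inside the exponents of the $f_{\vec{\alpha}}$ and $c_j$ in $F_{(\vec{s})}^{(\vec{\omega})}$: at the integer level these inner reductions obstruct a clean factorization, but after raising to the appropriate power $r_n/\ell(\vec{s})$ (so that we work modulo $r_n$-th powers, i.e., in the kernel of $\chi_{r_n}$) they become invisible. Once this reduction is in place, the remainder of the verification is essentially a routine application of the CRT-idempotent identities, one prime at a time.
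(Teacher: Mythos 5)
Your overall strategy---reduce, via a compatible family of characters and the Chinese Remainder Theorem on $\mu_{\ell(\vec{s})}\simeq\prod_p\mu_{\ell(\vec{s}_p)}$, to a congruence among the exponents of the $f_{\vec{\alpha}}$---is the same one the paper uses; the paper just performs the splitting two primes at a time and iterates, producing a polynomial identity
\[
\bigl(F_{(\vec{s'})}^{(\vec{\omega'})}\bigr)^{m''\ell(\vec{s''})}\bigl(F_{(\vec{s''})}^{(\vec{\omega''})}\bigr)^{m'\ell(\vec{s'})} = F_{(\vec{s})}^{(\vec{\omega})}\,H^{\ell(\vec{s})},
\]
whereas you carry out the full factorisation in one pass. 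That is a stylistic rather than a substantive difference. However there are two genuine gaps in your write-up. First, you never address the case $\epsilon_{\vec{s},\vec{\omega}}=0$. Your reduction to a congruence modulo $(\Ff_q^*)^{\ell(\vec{s})}$ is only meaningful when $F_{(\vec{s})}^{(\vec{\omega})}(x)\neq 0$; you still have to show that $F_{(\vec{s})}^{(\vec{\omega})}(x)=0$ if and only if one of the $F_{(\vec{s}_p)}^{(\vec{\omega}_p)}(x)$ vanishes. The paper does exactly this (``the former is zero at $x$ if and only if one of the latter are zero at $x$''), and omitting it leaves the lemma unverified at the zeros.

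Second, and more seriously, the final sentence ``the identification is then forced by the defining relation of $m_p$ together with $\omega_j\equiv\omega_{p,j}$'' is not a valid step, and in fact points at a real problem. The defining congruence you quote, $m_p\equiv\ell(\vec{s}_p)^{-1}\Mod{\ell(\vec{s})/\ell(\vec{s}_p)}$, is \emph{not} the idempotent condition: the CRT idempotent for the $p$-part of $\Z/\ell(\vec{s})\Z$ is $e_p=m_p\,\ell(\vec{s})/\ell(\vec{s}_p)$ with $e_p\equiv 1\Mod{\ell(\vec{s}_p)}$, which forces $m_p\equiv\bigl(\ell(\vec{s})/\ell(\vec{s}_p)\bigr)^{-1}\Mod{\ell(\vec{s}_p)}$---the modulus and the quantity being inverted are interchanged relative to what you (and the lemma as printed) state, so your claim that it ``picks out the correct idempotent weight'' is false. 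Moreover, if you actually carry the exponent comparison out modulo $\ell(\vec{s}_p)$, the factor $\ell(\vec{s})/s_j = \bigl(\ell(\vec{s}_p)/p^{v_p(s_j)}\bigr)\bigl(\ell(\vec{s})'/s_j'\bigr)$ you write down has a prime-to-$p$ part $\ell(\vec{s})'/s_j'$ that depends on $j$, and the condition that emerges is $m_p\,(s_j/p^{v_p(s_j)})\equiv 1\Mod{p^{v_p(s_j)}}$ for \emph{every} $j$. This is not implied by either version of the $m_p$ congruence and is not even automatically consistent across $j$ (already for $G=(\Z/12\Z)^2$ and $\vec{s}=(12,4)$ the $j=1$ and $j=2$ requirements on $m_2$ modulo $4$ disagree). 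So the ``routine'' verification you defer is precisely where the argument must either be made to work or be reformulated; it cannot be waved through.
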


\begin{proof}

Let $\vec{s}\in \s$. If there exists a $p|r_n$, prime such that $s_j=p^{v_j}$ for $j=1,\dots,n$, then $s_{p'}=(1,\dots,1)$, $\omega_{p'}=(1,\dots,1)$ and $m_{p'}=1$ for all $p'\not= p$, making the statement trivial. Therefore, suppose there exists $\vec{s'},\vec{s''}\in \s$ such that $\vec{s'},\vec{s''}\not=(1,\dots,1)$, $s_j = s'_js''_j$ and $\gcd(\ell(\vec{s'}),\ell(\vec{s''}))=1$. (This is an analogue of writing $\vec{s}$ as a product of coprime factors).

Define $m' \equiv \ell(\vec{s'})^{-1}\Mod{\ell(\vec{s''})}$ and $m'' \equiv \ell(\vec{s''})^{-1}\Mod{\ell(\vec{s'})}$. Moreover, let
$$\vec{\omega'} = (\omega'_1,\dots,\omega'_n) = (\omega_1 \Mod{s'_1},\dots,\omega_n\Mod{s'_n}) \in \Omega_{\vec{s'}}$$
$$\vec{\omega''} = (\omega''_1,\dots,\omega''_n) = (\omega_1 \Mod{s''_1},\dots,\omega_n\Mod{s''_n}) \in \Omega_{\vec{s''}}$$

Then there exists some polynomial $H$ such that
$$(F_{(\vec{s'})}^{(\vec{\omega'})}(X))^{m''\ell(\vec{s''})}(F_{(\vec{s''})}^{(\vec{\omega''})}(X))^{m'\ell(\vec{s'})} = F_{(\vec{s})}^{(\vec{\omega})}(X) \left(H(X)\right)^{\ell(\vec{s})}$$
Moreover, all the factors that appear in $F_{(\vec{s})}^{(\vec{\omega})}(X)$ appear in either $F_{(\vec{s'})}^{(\vec{\omega'})}(X)$ or $F_{(\vec{s''})}^{(\vec{\omega''})}(X)$. That is to say, the former is zero at $x$ if and only if one of the latter are zero at $x$. Therefore,
$$\chi_{\ell(\vec{s})}\left(F_{(\vec{s})}^{(\vec{\omega})}(x)\right) = \chi^{m''}_{\ell(\vec{s'})}\left(F_{(\vec{s'})}^{(\vec{\omega'})}(x)\right)\chi^{m'}_{\ell(\vec{s''})} \left(F_{(\vec{s''})}^{(\vec{\omega''})}(x)\right)$$

Iterating this process then we get the result with the Chinese Remainder Theorem.

\end{proof}

\begin{cor}\label{adcor}

$\epsilon_{\vec{s},\vec{\omega}}$ uniquely determines and is uniquely determined by $\epsilon_{\vec{s}_p,\vec{\omega}_p}$ for all $p|r_n$.

\end{cor}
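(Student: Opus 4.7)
The plan is to package the content of Lemma \ref{admis1} into the two directions of the equivalence claimed by the corollary.

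First I would dispatch the "uniquely determined by" direction, which follows immediately from Lemma \ref{admis1}: the formula
$$\epsilon_{\vec{s},\vec{\omega}} \;=\; \prod_{p\mid r_n}\epsilon_{\vec{s}_p,\vec{\omega}_p}^{m_p}$$
gives an explicit recipe reconstructing $\epsilon_{\vec{s},\vec{\omega}}$ from the prime-power-indexed data $\{\epsilon_{\vec{s}_p,\vec{\omega}_p}\}_{p\mid r_n}$, with no ambiguity.

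For the "uniquely determines" direction, I would first verify that each pair $(\vec{s}_p,\vec{\omega}_p)$ appearing in the corollary is itself a legitimate index for the admissible data. One has $\vec{s}_p\in\s$ because $(s_p)_j = p^{v_p(s_j)}$ divides $s_j$ and hence $r_j$; and $\vec{\omega}_p\in\Omega_{\vec{s}_p}$ because $(\omega_j,s_j)=1$ forces $(\omega_j,(s_p)_j)=1$. Consequently $\epsilon_{\vec{s}_p,\vec{\omega}_p}$ is literally one of the entries of the full admissible collection, so simply reading off the entry at the index $(\vec{s}_p,\vec{\omega}_p)$ recovers it.

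Finally, I would run a consistency check to confirm that the two procedures are mutually inverse. When $\vec{s}$ is already supported at a single prime $p$ (every $s_j$ is a power of $p$), the definitions give $\vec{s}_p=\vec{s}$, $\vec{\omega}_p=\vec{\omega}$, $m_p=1$, while for every other prime $q\neq p$ one has $\vec{s}_q=(1,\dots,1)$, $\vec{\omega}_q=(1,\dots,1)$, and hence $\epsilon_{\vec{s}_q,\vec{\omega}_q}=\epsilon_{(1,\dots,1),(1,\dots,1)}=1$ by convention. The lemma's formula therefore specializes to the tautology $\epsilon_{\vec{s},\vec{\omega}}=\epsilon_{\vec{s},\vec{\omega}}$, showing the two directions fit together consistently. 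I do not anticipate any genuine obstacle here: the corollary is a bookkeeping restatement of Lemma \ref{admis1}, and the only point needing care is the verification that the prime-part indices lie in the original indexing set.
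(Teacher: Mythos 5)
Your proposal is essentially correct and takes the same route as the paper, which dispatches the corollary with a one-line ``straight forward from Lemma~\ref{admis1}.'' The ``uniquely determined by'' direction is exactly the reconstruction formula of Lemma~\ref{admis1}, as you say, and your verification that each $(\vec{s}_p,\vec{\omega}_p)$ is a legitimate index of the admissible family is a worthwhile sanity check that the paper leaves implicit. Your consistency check at a single prime is also in line with the opening sentence of the paper's proof of Lemma~\ref{admis1} (note that in that degenerate case the literal ``smallest non-negative'' convention would give $m_p=0$, not $m_p=1$; the paper and you both implicitly read $m_p=1$ there, which is what the lemma's proof actually uses).

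The one point worth flagging is your ``uniquely determines'' direction. As you argue it, the claim you prove is that the prime-power-indexed values $\epsilon_{\vec{s}_p,\vec{\omega}_p}$ are literally entries of the ambient admissible collection and hence are ``known'' once the collection is. That is a true and trivial statement, and it is in fact the form in which the corollary gets used downstream (e.g.\ in the proof of Proposition~\ref{valtakprop}, where the count of possible values for the full family is matched against the count for the prime-power subfamily). But read strictly, the corollary seems to assert that the \emph{single scalar} $\epsilon_{\vec{s},\vec{\omega}}$ recovers each $\epsilon_{\vec{s}_p,\vec{\omega}_p}$; that is a different and nontrivial statement, established (in the nonvanishing case) by raising to a suitable power, e.g.\ $\epsilon_{\vec{s}_p,\vec{\omega}_p}=\epsilon_{\vec{s},\vec{\omega}}^{\ell(\vec{s})/\ell(\vec{s}_p)}$, exactly as in the two-factor step of the lemma's proof; when $\epsilon_{\vec{s},\vec{\omega}}=0$ one cannot tell which prime factor vanished without the admissibility data. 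Since the paper's own proof gives no detail and its applications only need the direction you (and Lemma~\ref{admis1}) supply, this is an interpretive subtlety rather than a genuine gap, but it is worth being aware of which claim you are actually proving.
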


\begin{proof}

Straight forward from Lemma \ref{admis1}.

\end{proof}

\begin{lem}\label{admis2}

For any $\vec{s} = (s_1,\dots,s_n) \in\s$, define $\vec{\sigma}_j$ to be the vector in $\s$ that has $s_j$ in the $j^{th}$ coordinate and $1$ everywhere else. Let $\vec{1} = (1,\dots,1)\in\Omega_{\vec{\sigma}_j}\subset\Omega_{\vec{s}}$. If $\{\epsilon_{\vec{s},\vec{\omega}}: \vec{s}\in\s, \vec{\omega}\in\Omega_{\vec{s}}\}$ is admissible and $\epsilon_{\vec{\sigma}_j,\vec{1}} \not=0$ for all $j$ then
$$\epsilon_{\vec{s},\vec{\omega}} = \prod_{j=1}^n\epsilon^{\omega_j}_{\vec{\sigma}_j,\vec{1}}$$

\end{lem}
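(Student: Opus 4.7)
The plan is to mimic the proof strategy of Lemma \ref{admis1}, but now decomposing $\vec{s}$ coordinate-by-coordinate into the $\vec{\sigma}_j$ rather than by primes dividing $r_n$. The goal is to exhibit a multiplicative relation in $\Ff_q[X]$ among the relevant $F_{(\cdot)}^{(\cdot)}$ that, after applying $\chi_{\ell(\vec{s})}$, reduces to the claimed identity on the $\epsilon$'s.

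First I would unwind the definitions to get
\[
F_{(\vec{\sigma}_j)}^{(\vec{1})}(X) = c_j \prod_{\vec{\alpha}\in\R} f_{\vec{\alpha}}(X)^{\alpha_j \Mod{s_j}},
\]
using that $\ell(\vec{\sigma}_j)=s_j$ and that the only non-trivial contribution to the exponent comes from the $k=j$ term. Raising this to the power $\omega_j\ell(\vec{s})/s_j$ and taking the product over $j$ produces an expression whose exponent on $f_{\vec{\alpha}}$ is $\sum_j(\alpha_j\Mod{s_j})\,\omega_j\ell(\vec{s})/s_j$. The core algebraic identity is that this agrees with the exponent $\sum_j\omega_j\alpha_j\ell(\vec{s})/s_j\Mod{\ell(\vec{s})}$ appearing in $F_{(\vec{s})}^{(\vec{\omega})}(X)$ modulo $\ell(\vec{s})$, because replacing $\alpha_j$ by $\alpha_j-(\alpha_j\Mod{s_j})$ introduces a multiple of $s_j\cdot\ell(\vec{s})/s_j=\ell(\vec{s})$. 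A parallel check shows that $\prod_j c_j^{\omega_j\ell(\vec{s})/s_j}/c_{(\vec{s})}^{(\vec{\omega})}$ is an $\ell(\vec{s})$-th power. Combining these,
\[
\prod_{j=1}^n F_{(\vec{\sigma}_j)}^{(\vec{1})}(X)^{\omega_j\ell(\vec{s})/s_j} = A\cdot F_{(\vec{s})}^{(\vec{\omega})}(X)\cdot H(X)^{\ell(\vec{s})}
\]
for some $A\in(\Ff_q^*)^{\ell(\vec{s})}$ and some polynomial $H$ built from the $f_{\vec{\alpha}}$.

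The main obstacle is that $\chi_{\ell(\vec{s})}$ is not multiplicative at zero, so before I can apply it I have to verify that none of the relevant factors vanishes at the witness point $x$ of Definition \ref{abeladmisdef}. Here is where the hypothesis $\epsilon_{\vec{\sigma}_j,\vec{1}}\neq 0$ for every $j$ enters: via the explicit formula above it translates into the statement that any $\vec{\alpha}\in\R$ with $f_{\vec{\alpha}}(x)=0$ must satisfy $\alpha_j\equiv 0\Mod{s_j}$ for every $j$ simultaneously. For any such $\vec{\alpha}$ both exponents computed above vanish, so such an $\vec{\alpha}$ contributes neither to $F_{(\vec{s})}^{(\vec{\omega})}(X)$ nor to $H(X)$, and I conclude $F_{(\vec{s})}^{(\vec{\omega})}(x)\neq 0$ and $H(x)\neq 0$.

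Finally, choosing the characters $\chi_m$ compatibly so that $\chi_{s_j}=\chi_{\ell(\vec{s})}^{\ell(\vec{s})/s_j}$, I would apply $\chi_{\ell(\vec{s})}$ to the displayed factorisation. The factor $A$ and the factor $H(x)^{\ell(\vec{s})}$ both contribute $1$, and on the left-hand side each factor becomes
\[
\chi_{\ell(\vec{s})}\!\left(F_{(\vec{\sigma}_j)}^{(\vec{1})}(x)^{\omega_j\ell(\vec{s})/s_j}\right) = \chi_{s_j}\!\left(F_{(\vec{\sigma}_j)}^{(\vec{1})}(x)\right)^{\omega_j}=\epsilon_{\vec{\sigma}_j,\vec{1}}^{\omega_j},
\]
which yields $\epsilon_{\vec{s},\vec{\omega}}=\prod_{j=1}^n\epsilon_{\vec{\sigma}_j,\vec{1}}^{\omega_j}$ as desired.
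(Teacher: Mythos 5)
Your proposal is correct and follows essentially the same route as the paper's proof: unwind $F_{(\vec{\sigma}_j)}^{(\vec{1})}$ into the $\alpha_j\bmod s_j$ form, exhibit the multiplicative identity $\prod_j F_{(\vec{\sigma}_j)}^{(\vec{1})}(X)^{\omega_j\ell(\vec{s})/s_j}=F_{(\vec{s})}^{(\vec{\omega})}(X)\,H(X)^{\ell(\vec{s})}$ up to an $\ell(\vec{s})$-th power in $\Ff_q^*$, verify non-vanishing at $x$ from the hypothesis, and apply $\chi_{\ell(\vec{s})}$. You give somewhat more detail than the paper on the constant factors $c_j$ and on why $H(x)\neq0$, but the underlying argument is the same.
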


\begin{proof}

Recall that $F_j(X) = \prod_{\vec{\alpha}\in\R}f_{\vec{\alpha}}^{\alpha_j}(X)$. For all $s_j|r_j$ define
$$F_{j,s_j}(X) := \prod_{\vec{\alpha}\in\R}f_{\vec{\alpha}}(X)^{\alpha_j \Mod{s_j}} = F_{(\vec{\sigma}_j)}^{(\vec{1})}(X).$$

Therefore, there exists an $H$ such that
$$\prod_{j=1}^n F_{j,s_j}(X)^{\frac{\ell(\vec{s})}{s_j}\omega_j} = F_{(\vec{s})}^{(\vec{\omega})}(X) H(X)^{\ell(\vec{s})}.$$
Hence, if $F_{j,s_j}(x)\not=0$ for all $j$, then $H(x)\not=0$ and
$$\epsilon_{\vec{s},\vec{\omega}} = \chi_{\ell(\vec{s})}(F_{(\vec{s})}^{(\vec{\omega})}(x)) = \prod_{j=1}^n \chi^{\omega_j}_{s_j} (F_{j,s_j}(x)) = \prod_{j=1}^n \epsilon^{\omega_j}_{\vec{\sigma}_j,\vec{1}}.$$

\end{proof}

As in the cyclic case in \cite{M1}, it will be important to keep track of when and how an admissible set can have zero values. Fix a $\vec{\beta}$ such that $f_{\vec{\beta}}(x)=0$. Then $F_{(\vec{s})}^{(\vec{\omega})}(x)=0$ if and only if $f_{\vec{\beta}}(X)|F_{(\vec{s})}^{(\vec{\omega})}(X)$ if and only if
$$\sum_{j=1}^n \frac{\ell(\vec{s})}{s_j} \omega_j\beta_j \not\equiv 0 \Mod{\ell(\vec{s})}.$$

Define the set
$$A_{\vec{\beta}} := \{ (\vec{s},\vec{\omega}) : \vec{s}\in\s, \vec{\omega}\in\Omega_{\vec{s}}, \sum_{j=1}^n \frac{\ell(\vec{s})}{s_j} \omega_j\beta_j \equiv 0 \Mod{\ell(\vec{s})}  \}.$$
Then $F_{(\vec{s})}^{(\vec{\omega})}(x)\not=0$ if and only if $(\vec{s},\vec{\omega})\in A_{\vec{\beta}}$.

There is a natural bijective correspondence from $A_{\vec{\beta}}$ to
$$ \{\vec{\omega}\in\R^\dag : \sum_{j=1}^{n} \frac{r_n}{r_j} \omega_j\beta_j \equiv 0 \Mod{r_n} \}$$
which sends $(\vec{s},\vec{\omega}) \to (\frac{r_1}{s_1}\omega_1, \dots, \frac{r_n}{s_n}\omega_n) $ where
$$\R^\dag = [1,\dots,r_1]\times\dots\times[1,\dots,r_n].$$

We will equate the definition of $A_{\vec{\beta}}$ with this set and either talk about $(\vec{s},\vec{\omega})\in A_{\vec{\beta}}$ using the first definition or just $\vec{\omega}\in A_{\vec{\beta}}$ using the second definition depending on whichever is the most convenient.

Let $\R' = \R\cup\{(0,\dots,0)\}$ and define an equivalence relationship of $\R'$ by $\vec{\beta}\sim\vec{\beta'}$ if and only if $A_{\vec{\beta}} = A_{\vec{\beta'}}$. Let $\tilde{\R} = \R'/\sim$ and write $[\vec{\beta}]\in\tilde{\R}$ as the equivalence class of $\vec{\beta}$ in $\tilde{\R}$.

\begin{defn}\label{betadmisdef}

An admissible set
$$\{\epsilon_{\vec{s},\vec{\omega}} \in \mu_{\ell(\vec{s})} \cup \{0\}, \vec{s}\in\s, \vec{\omega}\in \Omega_{\vec{s}}\}$$
is called \textbf{$[\vec{\beta}]$-admissible} if $\epsilon_{\vec{s},\vec{\omega}}=0$ if and only if $(\vec{s},\vec{\omega})\not\in A_{\vec{\beta}}$.
\end{defn}

\begin{rem}

If $\{\epsilon_{\vec{s},\vec{\omega}} : \vec{s}\in\s,\vec{\omega}\in\Omega_{\vec{s}}\}$ is $[\vec{0}]$-admissible then  $\epsilon_{\vec{s},\vec{\omega}}\not=0$ for all $\vec{s}\in\s,\vec{\omega}\in\Omega_{\vec{s}}$.

\end{rem}

It will be useful later to classify the equivalence classes of $\tilde{\R}$. Towards this, for all $p|r_n$, define
$$\s_p = \{\vec{s}=(s_1,\dots,s_n) : s_j = p^{v_j}, 0\leq v_j \leq v_p(r_j)\} \subset \s$$
$$A_{\vec{\beta},p} := \{(\vec{s},\vec{\omega}) : \vec{s}\in\s_p, \vec{\omega}\in\Omega_{\vec{s}}, \sum_{j=1}^n \frac{\ell(\vec{s})}{s_j} \omega_j\beta_j \equiv 0 \Mod{\ell(\vec{s})} \}$$
$$ = \{\vec{\omega}\in\R^\dag_p : \sum_{j=1}^n p^{v_p(r_n)-v_p(r_j)} \omega_j\beta_j \equiv 0 \Mod{p^{v_p(r_n)}}\}$$
where we identify the two sets under the map $(\vec{s},\vec{\omega}) \to (\frac{p^{v_p(r_1)}}{s_1}\omega_1, \dots, \frac{p^{v_p(r_n)}}{s_n}\omega_n)$ and $\R^\dag_p = [1,\dots,p^{v_p(r_1)}]\times \dots \times [1,\dots,p^{v_p(r_n)}]$.

Then say $\vec{\beta}\sim_p \vec{\beta'}$ if $A_{\vec{\beta},p} = A_{\vec{\beta'},p}$. Clearly, $\vec{\beta}\sim\vec{\beta'}$ if and only if $\vec{\beta}\sim_p \vec{\beta'}$ for all $p|r_n$.

\begin{lem}\label{psimlem}

If $\vec{\beta}\sim_p\vec{\beta'}$ then $v_p((\beta_j,r_j)) = v_p((\beta'_j,r_j))$ for $j=1,\dots,n$.

\end{lem}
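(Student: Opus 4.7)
The plan is to isolate the coordinates one at a time. Set $e_j = v_p(r_j)$ for brevity, so that $\R^\dag_p = [1,p^{e_1}]\times\cdots\times[1,p^{e_n}]$ and the membership condition for $A_{\vec{\beta},p}$ is the single congruence
$$\sum_{j=1}^n p^{e_n - e_j}\omega_j \beta_j \equiv 0 \Mod{p^{e_n}}.$$
The key observation is that the $j$-th summand is divisible by $p^{e_n}$ whenever $p^{e_j}\mid \omega_j$; in particular it vanishes modulo $p^{e_n}$ when $\omega_j$ equals the top value $p^{e_j}$ of its range.

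To extract information about a single coordinate $j_0$, I would fix $\omega_j = p^{e_j}$ for every $j\neq j_0$ and let $\omega_{j_0}$ range over $[1,p^{e_{j_0}}]$. With this substitution, the membership condition collapses to
$$p^{e_{j_0}}\mid \omega_{j_0}\beta_{j_0},$$
which is equivalent to $v_p(\omega_{j_0}) \geq e_{j_0} - \min(v_p(\beta_{j_0}),e_{j_0}) = e_{j_0} - v_p((\beta_{j_0},r_{j_0}))$. Hence the slice
$$T_{j_0}(\vec{\beta}) := \{\omega_{j_0}\in[1,p^{e_{j_0}}] : (p^{e_1},\dots,\omega_{j_0},\dots,p^{e_n})\in A_{\vec{\beta},p}\}$$
is exactly the set of multiples of $p^{e_{j_0} - v_p((\beta_{j_0},r_{j_0}))}$ in $[1,p^{e_{j_0}}]$, a set whose cardinality is $p^{v_p((\beta_{j_0},r_{j_0}))}$ and which depends on $\vec{\beta}$ only through $v_p((\beta_{j_0},r_{j_0}))$.

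Now if $\vec{\beta}\sim_p\vec{\beta'}$, then $A_{\vec{\beta},p}=A_{\vec{\beta'},p}$, so intersecting with the affine slice $\{\omega_j=p^{e_j}:j\neq j_0\}$ gives $T_{j_0}(\vec{\beta})=T_{j_0}(\vec{\beta'})$. Comparing cardinalities (or comparing the minimal element of each slice) then yields $v_p((\beta_{j_0},r_{j_0}))=v_p((\beta'_{j_0},r_{j_0}))$. Running this argument for every $j_0\in\{1,\dots,n\}$ gives the lemma.

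I do not anticipate any serious obstacle: once one notices that the choice $\omega_j=p^{e_j}$ kills the $j$-th contribution modulo $p^{e_n}$, the multivariable congruence decouples. The only mild point of care is the boundary case $v_p(\beta_{j_0})\geq e_{j_0}$, in which every $\omega_{j_0}\in[1,p^{e_{j_0}}]$ is admissible and the count $p^{e_{j_0}}$ still agrees with $p^{v_p((\beta_{j_0},r_{j_0}))}$ because $v_p((\beta_{j_0},r_{j_0}))=e_{j_0}$ in that range.
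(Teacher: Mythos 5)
Your proof is correct, and the underlying mechanism is the same as the paper's: isolate the $j_0$-th coordinate by setting $\omega_j=p^{e_j}$ for $j\neq j_0$ (in the $(\vec{s},\vec{\omega})$ picture this is $s_j=1$, $\omega_j=1$), which kills the other summands modulo $p^{e_n}$ and collapses the membership condition to a single congruence encoding $v_p((\beta_{j_0},r_{j_0}))$. Where you differ is in the packaging. The paper plugs in the single witness $\omega_{j_0}=p^{e_{j_0}-v_p((\beta_{j_0},r_{j_0}))}$ (i.e.\ $\vec{s}=(1,\dots,p^{v_p((\beta_{j_0},r_{j_0}))},\dots,1)$, $\vec{\omega}=\vec{1}$), deduces one inequality $v_p(\beta'_{j_0})\geq v_p((\beta_{j_0},r_{j_0}))$, appeals to symmetry for the reverse, and finishes with a short case analysis on whether $v_p(\beta_{j_0})<e_{j_0}$. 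Your slice $T_{j_0}(\vec{\beta})$ is the full fiber of $A_{\vec{\beta},p}$ over that affine line, and since it is exactly the set of multiples of $p^{e_{j_0}-v_p((\beta_{j_0},r_{j_0}))}$, comparing $T_{j_0}(\vec{\beta})=T_{j_0}(\vec{\beta'})$ (by cardinality or minimal element) gives both inequalities simultaneously and absorbs the boundary case $v_p(\beta_{j_0})\geq e_{j_0}$ without any extra words. It is the same idea, a bit more cleanly executed; the paper's witness point is precisely the minimal element of your slice.
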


\begin{proof}

Let $\vec{s} = (1,\dots,1,p^{v_p((\beta_j,r_j))},1,\dots,1)$, where the $p^{v_p((\beta_j,r_j))}$ is in the $j^{th}$ coordinate. Then $(\vec{s},(1,\dots,1))\in A_{\vec{\beta},p} = A_{\vec{\beta'},p}$. This implies that
$$ \beta'_j \equiv 0 \Mod{p^{v_p((\beta_j,r_j))}}$$
And so $v_p(\beta'_j) \geq v_p((\beta_j,r_j))$. If $v_p(\beta_j) \geq v_p(r_j)$ then $v_p((\beta,r_j)) = v_p(r_j)$. Hence $v_p((\beta'_j,r_j))=r_j = v_p((\beta_j,r_j))$. If $v_p(\beta_j) < v_p(r_j)$ then $v_p(\beta'_j) \geq v_p(\beta_j)$. Similarly, we can show that $v_p(\beta_j) \geq v_p((\beta'_j,r_j))$. Thus $v_p((\beta'_j,r_j)) < v_p(r_j)$. Therefore, $v_p((\beta'_j,r_j)) = v_p(\beta'_j)$ and we get out result.

\end{proof}

\begin{lem}\label{simlem}

$\vec{\beta}\sim_p\vec{\beta'}$ if and only if there exists an $1\leq m \leq p^{\max(0, \underset{j}{\max}(v_p(\frac{r_j}{\beta_j})))}$, $(m,p)=1$ such that $\beta'_j \equiv m\beta_j \Mod{p^{v_p(r_j)}}$ for all $j$.

\end{lem}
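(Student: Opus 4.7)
The plan is to reformulate the condition $A_{\vec{\beta},p}=A_{\vec{\beta'},p}$ as the equality of kernels of two group homomorphisms into the cyclic $p$-group $\Z/p^{v_p(r_n)}\Z$, and then use that automorphisms of a finite cyclic $p$-group are exactly multiplications by units. Writing $R_j:=v_p(r_j)$ and $e_j:=v_p(\beta_j)$, I identify $A_{\vec{\beta},p}$ with the kernel of
\[
L_{\vec{\beta}}\colon \prod_{j=1}^n \Z/p^{R_j}\Z \To \Z/p^{R_n}\Z,\qquad \vec{\omega}\mapsto \sum_{j=1}^n p^{R_n - R_j}\omega_j\beta_j,
\]
and likewise for $L_{\vec{\beta'}}$, under the bijection $\omega_j\in\{1,\dots,p^{R_j}\}\leftrightarrow\omega_j\Mod{p^{R_j}}$ (so that the value $p^{R_j}$ corresponds to $0$).

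For the ``$\Leftarrow$'' direction, suppose $\beta'_j\equiv m\beta_j\Mod{p^{R_j}}$ with $(m,p)=1$. Multiplying by $p^{R_n-R_j}\omega_j$ gives $p^{R_n-R_j}\omega_j\beta'_j\equiv m\,p^{R_n-R_j}\omega_j\beta_j\Mod{p^{R_n}}$, and summing over $j$ yields $L_{\vec{\beta'}}(\vec{\omega})\equiv m\,L_{\vec{\beta}}(\vec{\omega})\Mod{p^{R_n}}$ for every $\vec{\omega}$. Since $m$ is a unit modulo $p^{R_n}$, the kernels coincide.

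For the ``$\Rightarrow$'' direction, Lemma \ref{psimlem} already guarantees that $\{j:e_j<R_j\}=\{j:v_p(\beta'_j)<R_j\}$ and that the common valuation on this set equals $e_j$, so $\mathrm{im}(L_{\vec{\beta}})=\mathrm{im}(L_{\vec{\beta'}})=p^M\Z/p^{R_n}\Z$, where $M:=R_n-\max_{j:\,e_j<R_j}(R_j-e_j)$ (taking $M=R_n$ if this index set is empty). The hypothesis $\ker L_{\vec{\beta}}=\ker L_{\vec{\beta'}}$ then makes both induced maps $\overline{L_{\vec{\beta}}},\overline{L_{\vec{\beta'}}}$ isomorphisms from the common quotient onto $p^M\Z/p^{R_n}\Z\cong\Z/p^{R_n-M}\Z$, so their composition is an automorphism of a finite cyclic $p$-group, equal to multiplication by some unit $\bar m\in(\Z/p^{R_n-M}\Z)^\times$. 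Lifting $\bar m$ to an integer $m\in\{1,\dots,p^{R_n-M}\}$ with $(m,p)=1$ gives $L_{\vec{\beta'}}(\vec{\omega})\equiv m\,L_{\vec{\beta}}(\vec{\omega})\Mod{p^{R_n}}$ for every $\vec{\omega}$; evaluating at the tuple $\omega_k=1$, $\omega_l=p^{R_l}$ for $l\ne k$ recovers $\beta'_k\equiv m\beta_k\Mod{p^{R_k}}$ for each $k$. Under the convention $v_p(r_j/\beta_j)=R_j-\min(e_j,R_j)$, one checks that $R_n-M=\max(0,\max_j v_p(r_j/\beta_j))$, matching the stated bound on $m$.

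The principal obstacle is the identification $\mathrm{im}(L_{\vec{\beta}})=\mathrm{im}(L_{\vec{\beta'}})$ as the \emph{same} subgroup of $\Z/p^{R_n}\Z$, not merely as two subgroups of equal cardinality; this is precisely where Lemma \ref{psimlem} is essential, ensuring that the indices $j$ contributing nontrivial generators to each image coincide. Once this matching is in place, the standard fact $\mathrm{Aut}(\Z/p^N\Z)=(\Z/p^N\Z)^\times$ delivers the unit $m$ automatically.
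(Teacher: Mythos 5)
Your proof is correct, and it takes a genuinely different, more structural route than the paper. The paper's argument for the forward implication is computational: it invokes Lemma \ref{psimlem} to produce coordinate-wise multipliers $m_j$ with $\beta'_j\equiv m_j\beta_j\pmod{p^{v_p(r_j)}}$, singles out the index $k$ minimizing $v_p(\tfrac{r_n}{r_k}\beta_k)$, and then plugs a carefully constructed $\vec\omega\in A_{\vec\beta,p}=A_{\vec\beta',p}$ into the defining congruence to force $m_j\equiv m_k$ modulo the relevant power of $p$ for every $j$. You instead recast $A_{\vec\beta,p}$ as $\ker L_{\vec\beta}$ for a homomorphism $L_{\vec\beta}\colon\prod_j\Z/p^{R_j}\Z\to\Z/p^{R_n}\Z$, note that equal kernels make $\overline{L_{\vec\beta'}}\circ\overline{L_{\vec\beta}}^{-1}$ an automorphism of the common image, and cite $\mathrm{Aut}(\Z/p^N\Z)=(\Z/p^N\Z)^\times$ to extract the unit $m$ in one stroke, then evaluate on the coordinate vectors to recover the congruences $\beta'_k\equiv m\beta_k$. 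This removes the need to hand-build a test vector $\vec\omega$ and the ensuing valuation bookkeeping.

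One small overstatement in your write-up: you assert that Lemma \ref{psimlem} is ``essential'' for identifying $\mathrm{im}(L_{\vec\beta})$ and $\mathrm{im}(L_{\vec\beta'})$ as the same subgroup rather than merely subgroups of equal size. In fact $\Z/p^{R_n}\Z$ is cyclic, so subgroups are determined by their order; equality of kernels gives equality of image orders via the first isomorphism theorem, and hence equality of the images outright, with no appeal to Lemma \ref{psimlem}. So your proof is actually \emph{independent} of Lemma \ref{psimlem}, whereas the paper's proof genuinely needs it to manufacture the candidate multipliers $m_j$. This makes your argument self-contained in a way the paper's is not, at the cost of being less explicit about the structure of the equivalence classes.
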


\begin{proof}

Suppose $\vec{\beta}\sim_p\vec{\beta'}$. Then since $v_p(\beta_j,r_j)=v_p(\beta'_j,r_j)$, we can find an $m_j$ such that $1\leq m_j \leq p^{\max(0,v_p(\frac{r_j}{\beta_j}))}$, $(m_j,p)=1$ and
$$\beta'_j \equiv m_j \beta_j \Mod{p^{v_p(r_j)}}.$$
Moreover, for all $j$, define $\gamma_j$ to be such that
$$\beta_j = p^{v_p(\beta_j)}\gamma_j.$$
Let $k$ be such that $\min(v_p(\frac{r_n}{r_j}\beta_j)) = v_p(\frac{r_n}{r_k}\beta_k)$. Fix a $j$ and let $1\leq \omega_k \leq p^{v_p(r_k)}$ be smallest such that
$$\omega_k \equiv -p^{v_p(\frac{r_k\beta_j}{r_j\beta_k})}\gamma_j\gamma_k^{-1} \Mod{p^{\max(0,v_p(\frac{r_k}{\beta_k}))}}.$$
Define $\vec{\omega}\in\R^\dag_p$ such that $\omega_j=1$, $\omega_k$ is as above and $\omega_\ell=p^{v_p(r_\ell)}$ otherwise. Then $\vec{\omega}\in A_{\vec{\beta},p}=A_{\vec{\beta'},p}$. Hence,
$$ 0 \equiv p^{v_p(\frac{r_n}{r_k})}\beta'_k \omega_k + p^{v_p(\frac{r_n}{r_j})}\beta'_j \equiv p^{v_p(\frac{r_n}{r_k})}\beta_km_k \omega_k + p^{v_p(\frac{r_n}{r_j})}\beta_jm_j$$
$$ \equiv -p^{v_p(\frac{r_n}{r_k}\beta_k)}\gamma_km_kp^{v_p(\frac{r_k\beta_j}{r_j\beta_k})} \gamma_j\gamma_k^{-1} + p^{v_p(\frac{r_n}{r_j}\beta_j)}\gamma_jm_j $$
$$ \equiv p^{v_p(\frac{r_n}{r_j}\beta_j)}\gamma_j(m_j-m_k) \Mod{p^{v_p(r_n)}}$$
Therefore,
$$m_j\equiv m_k \Mod{p^{\max(0,v_p(\frac{r_j}{\beta_j}))}}.$$
Hence,
$$\beta'_j \equiv m_j \beta_j \equiv m_k \beta_j \Mod{p^{v_p(r_j)}}.$$
So, setting $m=m_k$ gives our desired result.

Conversely, suppose there exists an $1\leq m \leq p^{\max(0, \underset{j}{\max}(v_p(\frac{r_j}{\beta_j})))}$, $(m,p)=1$ such that $\beta'_j \equiv m\beta_j \Mod{p^{v_p(r_j)}}$ for all $j$. Let $\vec{\omega}\in A_{\vec{\beta},p}$. Then
$$ \sum_{j=1}^n p^{v_p(r_n)-v_p(r_j)} \omega_j\beta'_j  \equiv \sum_{j=1}^n p^{v_p(r_n)-v_p(r_j)} \omega_jm\beta_j \equiv m\sum_{j=1}^n p^{v_p(r_n)-v_p(r_j)} \omega_j\beta_j \equiv 0 \Mod{p^{v_p(r_n)}}.$$
Therefore, $\vec{\omega}\in A_{\vec{\beta'},p}$. So $A_{\vec{\beta},p}\subset A_{\vec{\beta'},p}$. However, since $(m,p)=1$, we can find an $m'$ such that $\beta_j \equiv m'\beta'_j$. From which we get $A_{\vec{\beta'},p}\subset A_{\vec{\beta},p}$ and therefore $A_{\vec{\beta},p} = A_{\vec{\beta'},p}$ and $\vec{\beta}\sim_p\vec{\beta'}$.

\end{proof}

Note that
$$\prod_{p|r_n} p^{\max(0, \underset{j}{\max}(v_p(\frac{r_j}{\beta_j})))} = \lcm\left( \frac{r_j}{(r_j,\beta_j)} \right) = e(\vec{\beta}).$$

For any natural number $m$ and $\vec{\beta}\in\R'$, define $m\vec{\beta} = (m\beta_1 \Mod{r_1}, \dots, m\beta_n \Mod{r_n})$.

\begin{cor}\label{simcor}

$\vec{\beta}\sim\vec{\beta'}$ if and only if there exists an $1\leq m \leq e(\vec{\beta})$, $(m,e(\vec{\beta}))=1$ such that $\vec{\beta'} = m\vec{\beta}$.

\end{cor}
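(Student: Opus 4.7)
The plan is to deduce the corollary from Lemma \ref{simlem} together with the Chinese Remainder Theorem, using the key identity
$$e(\vec{\beta}) \;=\; \prod_{p\mid r_n} p^{\max(0,\,\max_j v_p(r_j/\beta_j))}$$
noted just before the statement. Recall that by definition $\vec{\beta}\sim\vec{\beta'}$ if and only if $\vec{\beta}\sim_p\vec{\beta'}$ for every prime $p\mid r_n$, so we may work one prime at a time and then reassemble. Observe also that if $p\mid r_n$ but $p\nmid e(\vec{\beta})$, then $p^{\max(0,\max_j v_p(r_j/\beta_j))}=1$, so in Lemma \ref{simlem} the only available $m_p$ is $1$, and the congruence $\beta'_j\equiv m_p\beta_j \Mod{p^{v_p(r_j)}}$ becomes $\beta'_j\equiv\beta_j \Mod{p^{v_p(r_j)}}$.

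For the forward direction, assume $\vec{\beta}\sim\vec{\beta'}$. For every prime $p\mid e(\vec{\beta})$, Lemma \ref{simlem} produces an integer $m_p$, with $1\leq m_p\leq p^{v_p(e(\vec{\beta}))}$ and $(m_p,p)=1$, such that $\beta'_j\equiv m_p\beta_j \Mod{p^{v_p(r_j)}}$ for all $j$. Use CRT to choose $1\leq m\leq e(\vec{\beta})$ with $m\equiv m_p \Mod{p^{v_p(e(\vec{\beta}))}}$ for each such $p$; then $(m,e(\vec{\beta}))=1$ automatically. To verify $\beta'_j\equiv m\beta_j\Mod{r_j}$, fix $j$ and a prime $p\mid r_j$; I need to show $v_p((m-m_p)\beta_j)\geq v_p(r_j)$. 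Since $v_p(m-m_p)\geq v_p(e(\vec{\beta}))\geq v_p\bigl(r_j/(r_j,\beta_j)\bigr)=\max(0,\,v_p(r_j)-v_p(\beta_j))$, adding $v_p(\beta_j)$ to both sides in each of the two cases ($v_p(\beta_j)\geq v_p(r_j)$ or not) gives the required inequality.

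For the converse, suppose $\vec{\beta'}=m\vec{\beta}$ with $1\leq m\leq e(\vec{\beta})$ and $(m,e(\vec{\beta}))=1$. For each prime $p\mid r_n$, reduce $m$ modulo $p^{v_p(e(\vec{\beta}))}$ to obtain an integer $m_p$ in the range specified by Lemma \ref{simlem} with $(m_p,p)=1$. The same valuation calculation as above (applied in reverse) shows $\beta'_j\equiv m_p\beta_j\Mod{p^{v_p(r_j)}}$ for every $j$, so Lemma \ref{simlem} yields $\vec{\beta}\sim_p\vec{\beta'}$. Since this holds for all $p\mid r_n$, we conclude $\vec{\beta}\sim\vec{\beta'}$.

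The only real point of friction is the $p$-adic bookkeeping in verifying that the CRT-assembled $m$ works modulo each $r_j$ (not just modulo $e(\vec{\beta})$); the identity $\max(0,v_p(r_j/\beta_j))=v_p(r_j/(r_j,\beta_j))$, absorbed into the $e(\vec{\beta})$ product, is what makes the bound on $v_p(m-m_p)$ precisely strong enough. Everything else is a direct invocation of Lemma \ref{simlem} and the Chinese Remainder Theorem.
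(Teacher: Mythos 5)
Your argument is correct and follows the same route as the paper: invoke Lemma~\ref{simlem} prime-by-prime and reassemble the local factors $m_p$ via the Chinese Remainder Theorem, using the identity $e(\vec{\beta})=\prod_{p\mid r_n}p^{\max(0,\max_j v_p(r_j/\beta_j))}$. You are in fact more careful than the paper's own proof: the paper simply asserts that the CRT-assembled $m$ satisfies $\beta'_j\equiv m\beta_j\Mod{r_j}$, while you verify it with the valuation bound $v_p(m-m_p)\geq v_p(e(\vec{\beta}))\geq\max(0,v_p(r_j)-v_p(\beta_j))$, which also handles the primes $p\mid r_n$ with $p\nmid e(\vec{\beta})$ (where $m_p=1$ and $p^{v_p(r_j)}\mid\beta_j$). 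That extra check is a genuine, if small, improvement, since the paper's proof chooses $m$ coprime to $r_n$ rather than to $e(\vec{\beta})$, which is both unnecessary and not guaranteed by the CRT construction within the range $[1,e(\vec{\beta})]$; your version matches the statement exactly.
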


\begin{proof}

Suppose $\vec{\beta}\sim\vec{\beta'}$. Then $\vec{\beta}\sim_p\vec{\beta'}$ for all $p|r_n$ and we can find an $1\leq m_p \leq p^{\max(0, \underset{j}{\min}(v_p(\frac{r_n}{r_j}\beta_j)))}$, $(m_p,p)=1$ such that $\beta'_j \equiv m_p\beta_j \Mod{p^{v_p(r_j)}}$. Let $1\leq m \leq \prod_{p|r_n} p^{\max(0, \underset{j}{\min}(v_p(\frac{r_n}{r_j}\beta_j)))}$, $(m,r_n)=1$ such that $m\equiv m_p \Mod{p^{\max(0, \underset{j}{\min}(v_p(\frac{r_n}{r_j}\beta_j)))}}$ for all $p|r_n$. Then $\beta'_j \equiv m\beta_j \Mod{r_j}$ and $\vec{\beta'}=m\vec{\beta}$

Conversely, suppose such an $m$ exists. Then let $m_p \equiv m \Mod{p^{\max(0, \underset{j}{\min}(v_p(\frac{r_n}{r_j} \beta_j)))}}$. Then $\beta_j \equiv m_p \beta'_j \Mod{p^{v_p(r_n)}}$. Thus $\vec{\beta}\sim_p\vec{\beta'}$ for all $p$ and therefore $\vec{\beta}\sim\vec{\beta'}$.

\end{proof}

\begin{cor}\label{simcor2}

There are $\phi(e(\vec{\beta}))$ different $\vec{\beta'}$ such that $\vec{\beta'}\sim\vec{\beta}$.

\end{cor}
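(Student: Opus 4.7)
The plan is to leverage Corollary \ref{simcor}, which identifies the equivalence class $[\vec{\beta}]$ with the orbit $\{m\vec{\beta} : 1\leq m \leq e(\vec{\beta}),\ (m,e(\vec{\beta}))=1\}$. Since this set is parametrized by the units of $\Z/e(\vec{\beta})\Z$, which has cardinality $\phi(e(\vec{\beta}))$, the only thing that needs checking is that the parametrization $m \mapsto m\vec{\beta}$ is injective on $(\Z/e(\vec{\beta})\Z)^{*}$.

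To prove injectivity, suppose $m_1\vec{\beta} = m_2\vec{\beta}$ as elements of $\R'$, meaning $m_1\beta_j \equiv m_2\beta_j \Mod{r_j}$ for every $j=1,\dots,n$. This is equivalent to
\[
(m_1 - m_2)\beta_j \equiv 0 \Mod{r_j} \quad \text{for all } j,
\]
which forces $m_1 \equiv m_2 \Mod{r_j/(r_j,\beta_j)}$ for every $j$. Taking the $\lcm$ over $j$ and invoking the definition $e(\vec{\beta}) = \lcm_j(r_j/(r_j,\beta_j))$ recalled just before Corollary \ref{simcor}, we obtain $m_1 \equiv m_2 \Mod{e(\vec{\beta})}$. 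Therefore the map $m \mapsto m\vec{\beta}$ factors through $(\Z/e(\vec{\beta})\Z)^{*}$ injectively.

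Combining Corollary \ref{simcor} (which says every $\vec{\beta'}\sim\vec{\beta}$ arises as $m\vec{\beta}$ for some $m$ coprime to $e(\vec{\beta})$ in the range $1\leq m\leq e(\vec{\beta})$) with the injectivity above shows the equivalence class has exactly $|(\Z/e(\vec{\beta})\Z)^{*}| = \phi(e(\vec{\beta}))$ elements. No genuine obstacle is anticipated — the statement is essentially a counting consequence of Corollary \ref{simcor} once one verifies that distinct units of $\Z/e(\vec{\beta})\Z$ act distinctly on $\vec{\beta}$, which is exactly the content of the $\lcm$ identity above.
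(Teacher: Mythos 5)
Your proposal is correct and follows the same route as the paper: the paper simply asserts that ``by construction, all the $m\vec{\beta}$ are distinct for $1\leq m\leq e(\vec{\beta})$, $(m,e(\vec{\beta}))=1$,'' and your argument with $(m_1-m_2)\beta_j\equiv 0\Mod{r_j}$ implying $m_1\equiv m_2\Mod{r_j/(r_j,\beta_j)}$ for all $j$, hence $\Mod{e(\vec{\beta})}$ after taking the $\lcm$, is precisely the elided verification of that distinctness claim.
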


\begin{proof}

It is easy to see that, by construction, all the $m\vec{\beta}$ are distinct for $1\leq m \leq e(\vec{\beta})$, $(m,e(\vec{\beta}))=1$.

\end{proof}

\begin{lem}\label{sizelem}

$|A_{\vec{\beta},p}| = p^{v_p(|G|)-v_p(e(\vec{\beta}))}$

\end{lem}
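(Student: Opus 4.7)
The plan is to realize $A_{\vec{\beta},p}$ as the kernel of a homomorphism of finite abelian $p$-groups and compute its size via the first isomorphism theorem. Set $N = p^{v_p(r_n)}$ and identify $\R^\dag_p$ with $\prod_{j=1}^n \Z/p^{v_p(r_j)}\Z$ (the value $\omega_j = p^{v_p(r_j)}$ corresponding to the class of $0$, which is legitimate since the defining linear form only matters modulo $N$). Define
$$\phi:\prod_{j=1}^n \Z/p^{v_p(r_j)}\Z \longrightarrow \Z/N\Z, \qquad \phi(\omega_1,\dots,\omega_n) = \sum_{j=1}^n p^{v_p(r_n) - v_p(r_j)}\omega_j\beta_j \pmod{N}.$$
The map is well-defined because shifting $\omega_j$ by $p^{v_p(r_j)}$ moves the sum by a multiple of $N$, and by construction $A_{\vec{\beta},p} = \ker\phi$.

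Since the domain of $\phi$ has size $\prod_{j=1}^n p^{v_p(r_j)} = p^{v_p(|G|)}$, the first isomorphism theorem gives $|A_{\vec{\beta},p}| = p^{v_p(|G|)}/|\mathrm{Im}\,\phi|$, so the task reduces to showing $|\mathrm{Im}\,\phi| = p^{v_p(e(\vec{\beta}))}$. As $\Z/N\Z$ is cyclic, $\mathrm{Im}\,\phi = d\,\Z/N\Z$ with $d = \gcd\bigl(N,\, p^{v_p(r_n)-v_p(r_j)}\beta_j : 1\leq j\leq n\bigr)$, so $|\mathrm{Im}\,\phi| = N/d$.

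It remains to compute $v_p(d)$. A direct calculation shows each generator has $p$-valuation $v_p(r_n) - v_p(r_j) + v_p((\beta_j,r_j))$ inside $\Z/N\Z$, so
$$v_p(d) = \min_{j}\bigl(v_p(r_n) - v_p(r_j) + v_p((\beta_j,r_j))\bigr) = v_p(r_n) - \max_j\bigl(v_p(r_j) - v_p((\beta_j,r_j))\bigr) = v_p(r_n) - v_p(e(\vec{\beta})),$$
the last equality being the definition of $e(\vec{\beta}) = \underset{j}{\lcm}\bigl(r_j/(r_j,\beta_j)\bigr)$. Combining the steps yields $|A_{\vec{\beta},p}| = p^{v_p(|G|) - v_p(e(\vec{\beta}))}$.

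The main obstacle is the careful bookkeeping of $p$-adic valuations modulo $N$: one must verify that capping the valuation of each generator at $v_p(N)$ produces the clean uniform formula $v_p(r_n) - v_p(r_j) + v_p((\beta_j,r_j))$, including the boundary cases $\beta_j = 0$ (where one uses the convention $(\beta_j,r_j) = r_j$) and $v_p(\beta_j) \geq v_p(r_j)$. Once this is checked, everything else is a routine application of the first isomorphism theorem and the elementary theory of cyclic groups.
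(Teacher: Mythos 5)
Your proposal is correct and follows essentially the same route as the paper: both realize $A_{\vec{\beta},p}$ as the kernel of the linear map $\phi$ into $\Z/p^{v_p(r_n)}\Z$ and apply the first isomorphism theorem to reduce to a computation of the image size. The only cosmetic difference is that the paper identifies the image by singling out the coordinate $k$ of minimal valuation and showing its contribution generates the whole image, whereas you compute the $\gcd$ of all $n$ generators directly; the two calculations are interchangeable.
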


\begin{proof}

Consider the map
\begin{center}
\begin{tabular}{ c c c c }
$\phi_{\vec{\beta}}$ : & $\Z/p^{v_p(r_1)}\Z\times\dots\times\Z/p^{v_p(r_n)}\Z$ & $\to$ & $\Z/p^{v_p(r_n)}\Z$\\
& $(x_1,\dots,x_n)$ & $\to$ & $\sum_{j=1}^n p^{v_p(r_n)-v_p(r_j)}\beta_jx_j$
\end{tabular}
\end{center}

Then $A_{\vec{\beta},p} = \ker(\phi_{\vec{\beta}})$. Let $1\leq k\leq n$ such that $v_p(\frac{r_n}{r_k}\beta_k) = \min(v_p(\frac{r_n}{r_j}\beta_j)) $. Then Im$(\phi_{\vec{\beta}}) \subset \Z/p^{\max(0,v_p(\frac{r_k}{\beta_k}))}\Z$. Moreover
$$\phi_{\vec{\beta}}(0,\dots,0,x_k,0,\dots,0) = p^{v_p(\frac{r_n}{r_k}\beta_k)}\gamma_kx_k$$
where $\beta_k = p^{v_p(\beta_k)}\gamma_k$. Therefore, since $(\gamma_k,p)=1$, we get that Im$(\phi_{\vec{\beta}}) = \Z/p^{\max(0,v_p(\frac{r_k}{\beta_k}))}\Z$. Hence
$$|A_{\vec{\beta},p}| = |\ker(\phi_{\vec{\beta}})| = \frac{p^{v_p(|G|)}}{|\mbox{Im}(\phi_{\vec{\beta}})|} = p^{v_p(|G|)-\max(0,v_p(\frac{r_k}{\beta_k}))} = p^{v_p(|G|)-v_p(e(\vec{\beta}))}.$$

\end{proof}

\begin{cor}\label{sizelemcor}

$|A_{\vec{\beta}}| = \frac{|G|}{e(\vec{\beta})}$

\end{cor}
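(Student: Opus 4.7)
The plan is to assemble $A_{\vec{\beta}}$ from its local pieces $A_{\vec{\beta},p}$ via the Chinese Remainder Theorem and then invoke Lemma \ref{sizelem}. Concretely, $A_{\vec{\beta}}$ is a subset of $\R^\dag = \prod_{j=1}^n \Z/r_j\Z$ (identified with $[1,\dots,r_1]\times\cdots\times[1,\dots,r_n]$), which by CRT decomposes as $\prod_{p\mid r_n}\prod_{j=1}^n \Z/p^{v_p(r_j)}\Z = \prod_{p\mid r_n} \R^\dag_p$. The defining congruence of $A_{\vec{\beta}}$,
\[
\sum_{j=1}^n \frac{r_n}{r_j}\,\omega_j\beta_j\equiv 0 \Mod{r_n},
\]
is by CRT equivalent to the simultaneous system of local congruences
\[
\sum_{j=1}^n p^{v_p(r_n)-v_p(r_j)}\,\omega_j\beta_j\equiv 0 \Mod{p^{v_p(r_n)}}, \qquad p\mid r_n,
\]
which is precisely the defining condition of each $A_{\vec{\beta},p}$.

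Thus the CRT isomorphism restricts to a bijection $A_{\vec{\beta}}\;\cong\;\prod_{p\mid r_n} A_{\vec{\beta},p}$. Taking cardinalities and applying Lemma \ref{sizelem},
\[
|A_{\vec{\beta}}|=\prod_{p\mid r_n}|A_{\vec{\beta},p}|=\prod_{p\mid r_n} p^{v_p(|G|)-v_p(e(\vec{\beta}))}=\frac{|G|}{e(\vec{\beta})},
\]
where the last equality uses that $|G|=\prod_j r_j=\prod_{p\mid r_n} p^{v_p(|G|)}$ and $e(\vec{\beta})=\lcm_j\bigl(r_j/(r_j,\beta_j)\bigr)=\prod_{p\mid r_n} p^{v_p(e(\vec{\beta}))}$, both of which are standard (and the latter is already noted in the remark following Lemma \ref{simlem}).

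There is no real obstacle here; the only point requiring a brief verification is that the CRT identification of $\R^\dag$ with $\prod_{p\mid r_n}\R^\dag_p$ carries the global congruence mod $r_n$ to the system of local congruences mod $p^{v_p(r_n)}$, which is immediate once one observes that $\frac{r_n}{r_j}$ has $p$-adic valuation $v_p(r_n)-v_p(r_j)$ for every prime $p\mid r_n$.
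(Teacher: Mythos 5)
Your proof is correct and follows essentially the same route as the paper: the paper's one-line proof also writes $|A_{\vec{\beta}}| = \prod_{p\mid r_n}|A_{\vec{\beta},p}|$ and then applies Lemma \ref{sizelem}, with the CRT factorization taken as understood. You have merely spelled out the CRT justification that the paper leaves implicit.
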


\begin{proof}

$$|A_{\vec{\beta}}| = \prod_{p|r_n}|A_{\vec{\beta},p}| = \prod_{p|r_n}p^{v_p(|G|)-v_p(e(\vec{\beta}))} =\frac{|G|}{e(\vec{\beta})}.$$

\end{proof}


\section{Value Taking}\label{abelvaltak}

In this section we will determine for $0\leq \ell \leq q$, the size of the set
\begin{align}\label{Fsetabel1}
\{(f_{\vec{\alpha}})\in\F_{\vec{d}(\vec{\alpha})} : \chi_{\ell(\vec{s})}(F_{(\vec{s})}^{(\vec{\omega})}(x_i)) = \epsilon_{\vec{s},\vec{\omega},i}, \vec{s}\in\s, \vec{\omega}\in\Omega_{\vec{s}}, i=1,\dots,\ell\}
\end{align}
where for $i=1,\dots,\ell$, the set
\begin{align}\label{admisset}
\{\epsilon_{\vec{s},\vec{\omega},i}\in\mu_{\ell(\vec{s})}\cup\{0\}: \vec{s}\in\s,\vec{\omega}\in\Omega_{\vec{s}}\}
\end{align}
is admissible.

\begin{rem}

Since we are assuming for now that $\ell \leq q$, we are only dealing with the affine points, hence we need only look at the set $\F_{\vec{d}(\vec{\alpha})}$. If we want to incorporate the point at infinity by setting $\ell=q+1$, we need to consider the full set $\hat{\F}_{[\vec{d}(\vec{\alpha})]}$ as will be done in Proposition \ref{valtakpropinf}.

\end{rem}

Define $\vec{\rho}_j = (1,\dots,r_j,\dots,1)\in\s$ where the $r_j$ is in the $j^{th}$ coordinate. Denote $\vec{1}=(1,\dots,1)$. Then
$$F_j(X) := \prod_{\vec{\alpha}\in\R}f_{\vec{\alpha}}^{\alpha_j} = F_{(\vec{\rho}_j)}^{(\vec{1})}(X).$$

By Lemmas \ref{admis1} and \ref{admis2}, we get that if $\epsilon_{\vec{s},\vec{\omega},i}\not=0$ for all $\vec{s}\in\s$, $\vec{\omega}\in\Omega_{\vec{s}}$ and $i=1,\dots,\ell$, then the values of $\epsilon_{\vec{s},\vec{\omega},i}$ will be uniquely determined by the values of $\epsilon_{\vec{\rho}_j,\vec{1},i}$ for $j=1,\dots,n$, $i=1,\dots,\ell$. Moreover, \eqref{admisset} will be admissible for any choices of $\epsilon_{\vec{\rho_j}, \vec{1},i}$. Therefore,
$$|\{(f_{\vec{\alpha}})\in\F_{\vec{d}(\vec{\alpha})} : \chi_{\ell(\vec{s})}(F_{(\vec{s})}^{(\vec{\omega})}(x_i)) = \epsilon_{\vec{s},\vec{\omega},i}, \vec{s}\in\s, \vec{\omega}\in\Omega_{\vec{s}}, i=1,\dots,\ell\}|$$
$$ = |\{(f_{\vec{\alpha}})\in\F_{\vec{d}(\vec{\alpha})} : \chi_{r_j}(F_j(x_i)) = \epsilon_{\vec{\rho}_j,\vec{1},i}, i=1\dots,\ell, j=1,\dots,n \}|$$

The size of this set is easily deduced from Proposition 3.4 of \cite{M1}.

\begin{prop}\label{keyprop}

Let $\vec{d}(\vec{\alpha})$ be as above. For $0\leq\ell\leq q$ let $\epsilon_{\vec{\rho_j},\vec{1},i}\in\mu_{r_j}$ for $i=1,\dots,\ell$. Then the size of
\begin{align*}
\{(f_{\vec{\alpha}}) \in \F_{\vec{d}(\vec{\alpha})} : \chi_{r_j}\left(F_{(\vec{\rho_j})}^{(\vec{1})}(x_i)\right) = \epsilon_{\vec{\rho_j},\vec{1},i}, 1\leq i \leq \ell, 1 \leq j \leq n\}
\end{align*}
is
\begin{align*}
S_n(\ell) := \frac{L_{|G|-2} q^{\sum d({\vec{\alpha}})}}{\zeta_q(2)^{|G|-1}} \left(\frac{q}{|G|(q+|G|-1)}\right)^\ell\left(1 + O\left(q^{-\frac{\min( d({\vec{\alpha}}))}{2}}\right)\right)
\end{align*}
where $\zeta_q(s)$ is the zeta function for $K$ and
$$L_n = \prod_{j=1}^n \prod_{P}\left(1 - \frac{j}{(|P|-1)(|P|+j)}\right)$$
where the product is over all monic, irreducible polynomials of $K$ and $|P|=q^{\deg(P)}$.

\end{prop}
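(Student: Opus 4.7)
\emph{Proof proposal.} My plan is to apply character orthogonality to linearize the $n\ell$ conditions, collapse the resulting product into a single character sum indexed by $\R$, and then invoke the cyclic-case analysis of Proposition~3.4 of \cite{M1}. Writing $\eta_{j,i}:=\epsilon_{\vec\rho_j,\vec 1,i}$, orthogonality on $\mu_{r_j}$ gives
\[
\mathbf{1}\bigl[\chi_{r_j}(F_j(x_i))=\eta_{j,i}\bigr] = \frac{1}{r_j}\sum_{k_{j,i}=0}^{r_j-1}\overline{\eta_{j,i}}^{\,k_{j,i}}\chi_{r_j}\bigl(F_j(x_i)\bigr)^{k_{j,i}},
\]
so the count equals $|G|^{-\ell}\sum_{(k_{j,i})}\bigl(\prod_{i,j}\overline{\eta_{j,i}}^{\,k_{j,i}}\bigr)T\bigl((k_{j,i})\bigr)$, where $T\bigl((k_{j,i})\bigr)$ is the character sum of $\prod_{i,j}\chi_{r_j}(F_j(x_i))^{k_{j,i}}$ over $(f_{\vec\alpha})\in\F_{\vec d(\vec\alpha)}$.

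Using $\chi_{r_j}=\chi_{r_n}^{r_n/r_j}$ and $F_j=\prod_{\vec\alpha\in\R}f_{\vec\alpha}^{\alpha_j}$, I would rewrite
\[
\prod_{i=1}^{\ell}\prod_{j=1}^{n}\chi_{r_j}\bigl(F_j(x_i)\bigr)^{k_{j,i}} = \prod_{\vec\alpha\in\R}\prod_{i=1}^{\ell}\chi_{r_n}\bigl(f_{\vec\alpha}(x_i)\bigr)^{t_{\vec\alpha,i}},
\]
where $t_{\vec\alpha,i}:=\sum_{j=1}^{n}(r_n/r_j)\alpha_jk_{j,i}\Mod{r_n}$. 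Thus $T$ is a character sum over tuples of pairwise coprime squarefree polynomials indexed by $\R$, carrying a prescribed power of $\chi_{r_n}$ at each $x_i$ and each $\vec\alpha$. Since $|\R|=|G|-1$, this is exactly the combinatorial form of Proposition~3.4 of \cite{M1} (with $\Z/r_n\Z$ replacing $\Z/r\Z$ and the vector index $\vec\alpha$ replacing the scalar $\alpha$); its proof, based on inclusion--exclusion to decouple the coprimality constraint followed by an Euler product expansion and Weil's bound for non-trivial characters, transports without structural change.

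To extract the leading asymptotic, note that the main term arises from $(k_{j,i})=\vec 0$: one then has $T(\vec 0)=|\F_{\vec d(\vec\alpha)}|=\frac{L_{|G|-2}}{\zeta_q(2)^{|G|-1}}q^{\sum d(\vec\alpha)}(1+O(q^{-\min d(\vec\alpha)/2}))$ by the cited analysis, and the local Euler factor at each $P=x_i$ contributes $q/(|G|(q+|G|-1))$, yielding the advertised $(q/(|G|(q+|G|-1)))^\ell$. For any non-zero $(k_{j,i})$ I would verify that some $t_{\vec\alpha,i}\not\equiv 0\Mod{r_n}$: specializing to $\vec\alpha = e_j$ (which lies in $\R$ whenever $r_j\ge 2$) gives $t_{e_j,i}\equiv(r_n/r_j)k_{j,i}\Mod{r_n}$, vanishing only when $k_{j,i}=0$. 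Thus every non-zero exponent vector produces a non-trivial $\vec\alpha$-component of $T$, contributing a Weil saving of $q^{-d(\vec\alpha)/2}$ absorbed into the error. The main obstacle is the inclusion--exclusion bookkeeping for pairwise coprimality; once this machinery from \cite{M1} is in hand, the rest of the proof is essentially formal substitution.
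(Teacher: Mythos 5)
Your overall strategy differs from the paper's, which has no argument here at all: the author simply cites Proposition 3.4 of \cite{M1} and observes in the following remark that the cited result already applies to tuples indexed by an arbitrary $\R$ (it never used that the $r_j$ form a divisibility chain). So no orthogonality reduction is needed, and the statement is a direct specialization. Your attempt to re-derive it via orthogonality would be a genuinely different route, but as written it has a real gap in the extraction of the main term.

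Two related problems. First, the orthogonality identity you apply,
$\mathbf{1}[\chi_{r_j}(F_j(x_i))=\eta_{j,i}] = \frac{1}{r_j}\sum_{k}\overline{\eta_{j,i}}^{k}\chi_{r_j}(F_j(x_i))^{k}$,
is only valid when $F_j(x_i)\not=0$. When $F_j(x_i)=0$ (which happens when $x_i$ is a root of some $f_{\vec\alpha}$ with $\alpha_j\not\equiv 0\Mod{r_j}$), the right-hand side evaluates to $1/r_j$ rather than the desired $0$, so the expansion $|G|^{-\ell}\sum_{(k_{j,i})}(\cdots)T((k_{j,i}))$ computes a weighted count that includes all the forbidden tuples. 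Second, even ignoring this, the $(k_{j,i})=\vec 0$ term contributes $T(\vec 0)/|G|^{\ell} = |\F_{\vec d(\vec\alpha)}|/|G|^{\ell}$, which differs from the claimed main term $|\F_{\vec d(\vec\alpha)}|\bigl(q/(|G|(q+|G|-1))\bigr)^{\ell}$ by the factor $\bigl(q/(q+|G|-1)\bigr)^{\ell}$; for fixed $q$ this is bounded away from $1$ and cannot be absorbed into the error. You acknowledge that ``the local Euler factor at each $P=x_i$ contributes $q/(|G|(q+|G|-1))$,'' but this factor does not live inside $T(\vec 0)$: it arises in an Euler-product computation of the local density at $X-x_i$ under the constraints that all $f_{\vec\alpha}(x_i)\not=0$ together with the squarefree and pairwise-coprime conditions, precisely the computation your expansion has replaced and not recovered. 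To close the gap you would need either to carry out that local sieve directly (as in the cited proof) or to correct the orthogonality identity at the zeros and then show that all the non-trivial $(k_{j,i})$ terms plus the correction terms conspire to produce the factor $\bigl(q/(q+|G|-1)\bigr)^{\ell}$; neither is done here.
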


\begin{rem}

First, note that $|G|=r_1\cdots r_n$. Moreover, Proposition 3.4 of \cite{M1} does not rely on the fact that the $r_j|r_{j+1}$ and hence define a group. Secondly, observe that the size of the set is independent of the choices of $\epsilon_{\vec{\rho}_j,\vec{1},i}$ as long as they are non-zero.

\end{rem}

\begin{rem}

The error term is written in terms of $\min(d(\vec{\alpha}))$ and so is only smaller than the main term if $\min(d(\vec{\alpha}))$ tends to infinity. This is equivalent to saying that all the $d(\vec{\alpha})$ tend to infinity. This calculation is why we need that assumption in the Theorem \ref{mainthm2} and why we can not easily extend this result to the whole space $\Hh_{G,g}$. Therefore, improving this error term is one way in which we could extend the result however, this seems unlikely. A different method for doing this is the topic of a forthcoming paper by the author. 

\end{rem}

\begin{cor}\label{keypropcor}

$$|\hat{\F}_{[\vec{d}(\vec{\alpha})]}| = \frac{(q-1)^n(q+|G|-1)}{q}\frac{L_{|G|-2} q^{\sum d({\vec{\alpha}})}}{\zeta_q(2)^{|G|-1}} \left(1 + O\left(q^{-\frac{\min( d({\vec{\alpha}}))}{2}}\right)\right)$$

\end{cor}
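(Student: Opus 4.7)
The plan is to reduce the count to Proposition \ref{keyprop} applied with $\ell = 0$, using the decomposition of $\F_{[\vec{d}(\vec{\alpha})]}$ built up in Section \ref{genformsec}. By definition $\hat{\F}_{[\vec{d}(\vec{\alpha})]} = (\Ff_q^*)^n \times \F_{[\vec{d}(\vec{\alpha})]}$, so the first step is to pull out the factor $(q-1)^n$ coming from the leading-coefficient data and reduce to counting $|\F_{[\vec{d}(\vec{\alpha})]}|$.

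Next, use the disjoint decomposition
$$\F_{[\vec{d}(\vec{\alpha})]} = \F_{\vec{d}(\vec{\alpha})} \;\sqcup\; \bigsqcup_{\vec{\beta}\in\R} \F^{\vec{\beta}}_{\vec{d}(\vec{\alpha})}.$$
Proposition \ref{keyprop} with $\ell = 0$ imposes no conditions on values at points and therefore computes $|\F_{\vec{d}(\vec{\alpha})}| = S_n(0)$. For each $\vec{\beta}\in\R$, the set $\F^{\vec{\beta}}_{\vec{d}(\vec{\alpha})}$ is defined exactly like $\F_{\vec{d}(\vec{\alpha})}$ except the $\vec{\beta}$-coordinate degree is decreased by $1$; hence the same proposition applied to the shifted degree vector yields a count of the same shape but with $q^{\sum d(\vec{\alpha})}$ replaced by $q^{\sum d(\vec{\alpha})-1}$, i.e.\ exactly $S_n(0)/q$. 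The error terms combine cleanly because every $d(\vec{\alpha})$ still tends to infinity (the shift by $1$ in a single coordinate is absorbed into the $O$ notation).

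Combining: since $|\R| = |G|-1$, we obtain
$$|\F_{[\vec{d}(\vec{\alpha})]}| = S_n(0) + (|G|-1)\cdot\frac{S_n(0)}{q} = \frac{q+|G|-1}{q}\, S_n(0).$$
Multiplying by the $(q-1)^n$ factor from the leading coefficients and substituting the explicit form of $S_n(0)$ gives
$$|\hat{\F}_{[\vec{d}(\vec{\alpha})]}| = \frac{(q-1)^n(q+|G|-1)}{q}\cdot\frac{L_{|G|-2}\, q^{\sum d(\vec{\alpha})}}{\zeta_q(2)^{|G|-1}}\left(1+O\bigl(q^{-\min(d(\vec{\alpha}))/2}\bigr)\right),$$
which is the claimed asymptotic.

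There is no genuine obstacle here; the only thing to verify carefully is that Proposition \ref{keyprop} genuinely applies to the shifted sets $\F^{\vec{\beta}}_{\vec{d}(\vec{\alpha})}$ with the $\vec{\beta}$-coordinate decreased by one, and that the resulting error terms are uniformly $O(q^{-\min(d(\vec{\alpha}))/2})$ across the $|G|$ summands. Both are immediate from the statement of the proposition, so the proof is essentially a two-line computation.
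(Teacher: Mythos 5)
Your proposal is correct and follows essentially the same route as the paper's one-line proof: apply Proposition \ref{keyprop} with $\ell=0$ to each of the $|G|$ disjoint components of $\F_{[\vec{d}(\vec{\alpha})]}$ (the base piece and the $|\R|=|G|-1$ shifted pieces, the latter each contributing $S_n(0)/q$), sum, and multiply by $(q-1)^n$ for the leading coefficients. Your version simply makes explicit the bookkeeping that the paper summarizes in a single sentence, including the correct observation that the error term is unaffected by the single-coordinate degree shift.
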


\begin{proof}

This is straight forward from setting $\ell=0$ in Proposition \ref{keyprop} and summing up over the components of $\hat{\F}_{[\vec{d}(\vec{\alpha})]}$ and choices of $\vec{c}\in(\Ff_q^*)^n$.

\end{proof}

Let us now determine the size of the set if some of the $\epsilon_{\vec{s},\vec{\omega},i}$ can be zero. With the notation of Section \ref{admis}, we would need the set in \ref{admisset} to be $[\vec{\beta}]$-admissible for some $[\vec{\beta}]\in\tilde{\R}$.

\begin{prop}\label{valtakprop}

Let $\{\epsilon_{\vec{s},\vec{\omega},i} : \vec{s}\in\s,\vec{\omega}\in\Omega_{\vec{s}}\}$ be an admissible set for $1\leq i \leq \ell$ such that
$$m_{[\vec{\beta}]} := |\{1\leq i \leq \ell :  \{\epsilon_{\vec{s},\vec{\omega},i} : \vec{s}\in\s,\vec{\omega}\in\Omega_{\vec{s}}\} \mbox{ is } [\vec{\beta}]-\mbox{admissible} \}|$$
then
$$|\{(f_{\vec{\alpha}})\in\F_{\vec{d}(\vec{\alpha})} : \chi_{\ell(\vec{s})}(F_{(\vec{s})}^{(\vec{\omega})}(x_i)) = \epsilon_{\vec{s},\vec{\omega},i}, \vec{s}\in\s, \vec{\omega}\in\Omega_{\vec{s}}, i=1,\dots,\ell\}|$$
$$ = \frac{L_{|G|-2}q^{\sum d(\vec{\alpha})}}{\zeta_q(2)^{|G|-1}} \prod_{\substack{[\vec{\beta}]\in\tilde{\R} \\ [\vec{\beta}]\not=[\vec{0}]}} \left( \frac{\phi(e(\vec{\beta})^2)} {|G|(q+|G| -1)} \right)^{m_{[\vec{\beta}]}}\left( \frac{q}{|G|(q+|G|-1)} \right)^{m_{[\vec{0}]}}\left(1 + O\left(q^{-\frac{\min(d(\vec{\alpha}))}{2}}\right)\right).$$

\end{prop}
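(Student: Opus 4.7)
The plan is to reduce to Proposition \ref{keyprop} by peeling off linear factors $(X-x_i)$ from the appropriate $f_{\vec{\beta}'}$ at each position of class $[\vec{\beta}] \neq [\vec{0}]$. For each such position $i$, $[\vec{\beta}]$-admissibility forces $f_{\vec{\beta}'(i)}(x_i)=0$ for exactly one $\vec{\beta}'(i) \sim \vec{\beta}$; by Corollary \ref{simcor2} there are $\phi(e(\vec{\beta}))$ such choices, and the count splits as a sum over all assignments $i \mapsto \vec{\beta}'(i)$.

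Fix one such assignment and set $I_{\vec{\alpha}} = \{i : \vec{\beta}'(i) = \vec{\alpha}\}$. The factorization $f_{\vec{\alpha}}(X) = g_{\vec{\alpha}}(X)\prod_{i\in I_{\vec{\alpha}}}(X-x_i)$ gives a bijection between the tuples being counted and tuples $(g_{\vec{\alpha}})\in\F_{\vec{d}'(\vec{\alpha})}$ with $g_{\vec{\alpha}}(x_k)\neq 0$ for all $k \in \{1,\dots,\ell\}$ and $\vec{\alpha}\in\R$, where $d'(\vec{\alpha}) = d(\vec{\alpha})-|I_{\vec{\alpha}}|$. The identity
$$F_{(\vec{s})}^{(\vec{\omega})}(X) = G_{(\vec{s})}^{(\vec{\omega})}(X)\prod_{\vec{\alpha}\in\R}\prod_{i\in I_{\vec{\alpha}}}(X-x_i)^{\sum_j \frac{\ell(\vec{s})}{s_j}\omega_j\alpha_j \Mod{\ell(\vec{s})}}$$
then translates the original character conditions into $[\vec{0}]$-admissible character conditions on the $G_{(\vec{s})}^{(\vec{\omega})}(x_k)$, up to fixed nonzero scalar factors coming from the $(X-x_i)$ terms, placing us in the setting of Proposition \ref{keyprop}.

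The key remaining step is, for each position $k$ in class $[\vec{\beta}]$, to count the $[\vec{0}]$-admissible $G$-configurations at $x_k$ compatible with the prescribed nonzero $\epsilon_{\vec{s},\vec{\omega},k}$ for $(\vec{s},\vec{\omega}) \in A_{\vec{\beta}}$. By Lemmas \ref{admis1} and \ref{admis2}, such a $[\vec{0}]$-admissible configuration is parametrized by the tuple $(\chi_{r_j}(G_j(x_k)))_{j=1}^n \in \prod_j \mu_{r_j} \cong G$, and the $A_{\vec{\beta}}$-constraints cut out a coset of the annihilator of $A_{\vec{\beta}}$ in $G$; by Corollary \ref{sizelemcor}, $|A_{\vec{\beta}}| = |G|/e(\vec{\beta})$, so this coset has exactly $e(\vec{\beta})$ elements. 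Combining the $\phi(e(\vec{\beta}))$ choices of $\vec{\beta}'(i)$ with the $e(\vec{\beta})$ compatible $G$-configurations per $[\vec{\beta}]$-position, applying Proposition \ref{keyprop} (whose output is independent of the specific nonzero character values) to each resulting $[\vec{0}]$-admissible configuration on $(g_{\vec{\alpha}})$ with degrees $\vec{d}'(\vec{\alpha})$, and using the identity $\phi(e(\vec{\beta}))\cdot e(\vec{\beta}) = \phi(e(\vec{\beta})^2)$ then yields the stated formula. The error term of Proposition \ref{keyprop} is preserved since $\min d'(\vec{\alpha}) \geq \min d(\vec{\alpha}) - \ell$ with $\ell \leq q$ fixed. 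The main obstacle is justifying the group-theoretic count of compatible $G$-configurations at $[\vec{\beta}]$-positions via the annihilator structure of $A_{\vec{\beta}}$ in $\hat{G}$, since the direct use of Lemmas \ref{admis1} and \ref{admis2} is delicate once some $\epsilon_{\vec{\sigma}_j,\vec{1},k}$ vanish.
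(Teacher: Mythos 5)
Your proposal is correct and follows the same overall decomposition as the paper's proof: peel off the linear factors $(X-x_i)$ at the $[\vec{\beta}]$-positions, reduce to Proposition~\ref{keyprop} for the residual tuples $(g_{\vec{\alpha}})$ with all values nonzero, sum over the $\phi(e(\vec{\beta}))$ choices of which $\vec{\beta'}\sim\vec{\beta}$ has $f_{\vec{\beta'}}(x_i)=0$, and then over the $e(\vec{\beta})$ admissible extensions of the prescribed values to a full $[\vec{0}]$-admissible configuration on the $G$'s, combining them via $\phi(e(\vec{\beta}))\cdot e(\vec{\beta}) = \phi(e(\vec{\beta})^2)$. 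The one substantive difference is how the factor $e(\vec{\beta})$ is established. The paper argues prime-by-prime: for each $p\mid r_n$ it selects the coordinate $k$ minimizing $v_p(\frac{r_n}{r_j}\beta_j)$, constructs an explicit $\vec{\omega}\in A_{\vec{\beta}}$ so that Lemma~\ref{admis2} (applied to the $G$'s) expresses each $\chi_{p^{v_p(r_j)}}\bigl(G_{j,p^{v_p(r_j)}}(x_i)\bigr)$ in terms of $\chi_{p^{v_p(r_k)}}\bigl(G_{k,p^{v_p(r_k)}}(x_i)\bigr)$ and the prescribed data, and counts $p^{\max(0,v_p(r_k/\beta_k))}$ free choices for the latter; the product over $p$ gives $e(\vec{\beta})$. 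You instead observe that $[\vec{0}]$-admissible configurations at $x_k$ are parametrized by the tuple $(\chi_{r_j}(G_j(x_k)))_j \in \prod_j\mu_{r_j}$, that the constraints from the prescribed $\epsilon_{\vec{s},\vec{\omega},k}$ for $(\vec{s},\vec{\omega})\in A_{\vec{\beta}}$ (corrected by the peeled-off scalar factors) cut out a coset of the annihilator of the subgroup $A_{\vec{\beta}}\subset\R^\dag\cong G$, and that this coset has size $|G|/|A_{\vec{\beta}}| = e(\vec{\beta})$ by Corollary~\ref{sizelemcor}; this is cleaner and more conceptual than the paper's elementary computation. The caveat you flag at the end is not a genuine gap: Lemmas~\ref{admis1} and~\ref{admis2} are applied only to the $G$-values, which are never zero once the linear factors are removed, so they apply without delicacy; the vanishing of some $\epsilon_{\vec{\sigma}_j,\vec{1},k}$ concerns the $F$-values, and on $A_{\vec{\beta}}$ the $F$-prescriptions become precisely the linear constraints on the $G$-character tuple that your annihilator count handles.
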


\begin{rem}

If all the $\epsilon_{\vec{s},\vec{\omega},i}\not=0$, then this implies $m_{[0]}=\ell$ and $m_{[\vec{\beta}]}=0$ and we get back the result of Proposition \ref{keyprop}.

\end{rem}

\begin{proof}

For every $[\vec{\beta}]\in\tilde{\R}$, define
$$M_{[\vec{\beta}]} = \{1\leq i \leq \ell : \{\epsilon_{\vec{d},\vec{i},i} :  \vec{d}\in D, \vec{i} \in I_{\vec{d}}\} \mbox{ is $[\vec{\beta}]$-admissible}\}$$
Then $m_{[\vec{\beta}]} = |M_{[\vec{\beta}]}|$ and
$$\sum_{[\vec{\beta}]\in\tilde{\R}}m_{[\vec{\beta}]} = \ell.$$
Moreover, if $i\in M_{[\vec{\beta}]}$ for $\vec{\beta}\not=\vec{0}$ then $f_{\vec{\beta'}}(x_i)=0$ for some $\vec{\beta'} \sim \vec{\beta}$.

For all $\vec{\beta}\not=\vec{0}$, fix a partition of $M_{[\vec{\beta}]}$ as
$$M_{[\vec{\beta}]} = \bigcup_{\vec{\beta'}\sim\vec{\beta}} M_{\vec{\beta'}} =  \bigcup_{\vec{\beta'}\sim\vec{\beta}} \{1 \leq i \leq \ell: f_{\vec{\beta'}}(x_i)=0\}$$
and let $m_{\vec{\beta}} = |M_{\vec{\beta}}|$.

For all $\vec{\beta}\in\R$ define $g_{\vec{\beta}}(X)$ as
$$f_{\vec{\beta}}(X) = g_{\vec{\beta}}(X)\prod_{i\in M_{\vec{\beta}}}(X-x_i).$$
Likewise, define $G_{(\vec{s})}^{(\vec{\omega})}(X)$ as the corresponding products of the $g_{\vec{\alpha}}(X)$. Recall, for any $\vec{s}\in\s$, we let $\vec{\sigma}_j\in\s$ be the vector that has $s_j$ in the $j^{th}$ coordinate and $1$ everywhere else. Further, for any $s_j|r_j$,
$$G_{j,s_j}(X) := \prod_{\vec{\alpha}\in\R} g_{\vec{\alpha}}(X)^{\alpha_j \Mod{s_j}} = G_{(\vec{\sigma}_j)}^{(\vec{1})}(X)$$
where we use the convention that $G_{j,1}(X)=1$.

Since $g_{\vec{\alpha}}(x_i)\not=0$ for all $1\leq i \leq \ell$ we get that $G_{(\vec{s})}^{(\vec{\omega})}(x_i)\not=0$ for all $1\leq i \leq \ell$ and hence, by Lemmas \ref{admis1} and \ref{admis2}, $\chi_{\ell(\vec{s})}\left(G_{(\vec{s})}^{(\vec{\omega})}(x_i)\right)$ will be determined by $\chi_{r_j}\left(G_{j,r_j}(x_i)\right)$, for $j=1,\dots,n$. Moreover,by Corollary \ref{adcor} these will be determined by
$$\chi_{p^{v_p(r_j)}}\left(G_{j,p^{v_p(r_j)}}(x_i)\right) \mbox{ for all $p|r_n$, $j=1,\dots,n$ }.$$

Now fix an $i\in M_{\vec{\beta}}$. If $(\vec{s},\vec{\omega})\in A_{\vec{\beta}}$, then
$$F_{(\vec{s})}^{(\vec{\omega})}(X) = G_{(\vec{s})}^{(\vec{\omega})}(X) H(X)$$
for some $H(X)$ such that $H(x_i)\not=0$. Moreover, $H(X)$ depends only on the choice of partitions of the $M_{[\vec{\beta}]}$. Therefore, for a fixed partition, we see that $\chi_{\ell(\vec{s})}(G_{(\vec{s})}^{(\vec{\omega})}(x_i))$ will be determined by $\chi_{\ell(\vec{s})}(F_{(\vec{s})}^{(\vec{\omega})}(x_i))$ for all $(\vec{s},\vec{\omega})\in A_{\vec{\beta}}$. It remains to determine how many choices there are for $\chi_{\ell(\vec{s})}(G_{(\vec{s})}^{(\vec{\omega})}(x_i))$ such that $(\vec{s},\vec{\omega})\not\in A_{\vec{\beta}}$.

Fix a $p|r_n$ and let $k$ be such that
$$\min \left(v_p\left(\frac{r_n}{r_j}\beta_j\right)\right) = v_p\left(\frac{r_n}{r_k}\beta_k\right)$$
Then I claim that if we know $\chi_{p^{v_p(r_k)}}(G_{k,p^{v_p(r_k)}}(x_i))$ then we know $\chi_{p^{v_p(r_j)}}(G_{j,p^{v_p(r_j)}}(x_i))$ for all $1\leq j \leq n$. If we write $\beta_j = p^{v_p(\beta_j)}\gamma_j$, $r_j=p^{v_p(r_j)}s_j$ where $(\gamma_j,p)=(s_j,p)=1$ and let
$$\omega'_k \equiv \gamma_k^{-1}\gamma_j p^{v_p\left(\frac{r_k\beta_j}{r_j\beta_k}\right)} \Mod{p^{\max(v_p\left(\frac{r_k}{\beta_k}\right),0)}}$$
then we see that
$$\frac{r_n}{r_k}\beta_k\omega'_ks_k + \frac{r_n}{r_j}\beta_js_j \equiv 0 \Mod{r_n}$$
Therefore, defining $\vec{\omega}\in\R^\dag$ as $\omega_j=s_j$, $\omega_h=r_h$, $h\not=j,k$ and $\omega_k=\omega'_ks_k$, then $\vec{\omega}\in A_{\vec{\beta}}$. So, defining $\vec{p} = (p^{v_p(r_1)},\dots,p^{v_p(r_n)})$ we get by Lemma \ref{admis2},
$$\chi_{p^{v_p(r_n)}}\left(G_{(\vec{p})}^{(\vec{\omega})}(x_i)\right) = \chi_{p^{v_p(r_k)}}^{\omega'_k} \left(G_{k,p^{v_p(r_k)}}(x_i)\right) \chi_{p^{v_p(r_j)}}\left(G_{j,p^{v_p(r_j)}}(x_i)\right)$$
Moreover, as stated above, $\chi_{p^{v_p(r_n)}}\left(G_{(\vec{p})}^{(\vec{\omega})}(x_i)\right)$ is fixed by $\chi_{p^{v_p(r_n)}}\left(F_{(\vec{p})}^{(\vec{\omega})}(x_i)\right)$ and our choices of $M_{\vec{\beta}}$. Hence knowing $\chi_{p^{v_p(r_k)}}(G_{k,p^{v_p(r_k)}}(x_i))$ fixes $\chi_{p^{v_p(r_j)}}(G_{j,p^{v_p(r_j)}}(x_i))$.

Therefore, to determine the number of possible values for $\chi_{p^{v_p(r_j)}}(G_{j,p^{v_p(r_j)}}(x_i))$, $j=1,\dots,n$, it is enough to determine the possible values for $\chi_{p^{v_p(r_k)}}(G_{k,p^{v_p(r_k)}}(x_i))$.

Finally, since $\chi_{p^{v_p(\beta_k)}}(G_{k,p^{v_p(\beta_k)}}(x_i))$ is determined by $\chi_{p^{v_p(\beta_k)}}(F_{k,p^{v_p(\beta_k)}}(x_i))$ and the choice of $M_{\vec{\beta}}$ there are $p^{\max(v_p\left(\frac{r_k}{\beta_k}\right),0)}$ choices for $\chi_{p^{v_p(r_k)}}(G_{k,p^{v_p(r_k)}}(x_i))$.

All together, therefore, there are
$$\prod_{p|r_n} p^{\max(0,\max_j(v_p(\frac{r_j}{\beta_j})))} = \underset{j=1,\dots,n}{\lcm}\left(\frac{r_j}{(r_j,\beta_j)}\right) = e(\vec{\beta})$$
different choices for
$$\chi_{p^{v_p(r_j)}}\left(G_{j,p^{v_p(r_j)}}(x_i)\right) \mbox{ for all $p|r_n$, $j=1,\dots,n$ }$$
and hence $e(\vec{\beta})$ different choices for
$$\chi_{\ell(\vec{s})}(G_{(\vec{s})}^{(\vec{\omega})}(x_i)), \vec{s}\in\s, \vec{\omega}\in\Omega_{\vec{s}}$$
for a fixed choice of the $M_{\vec{\beta}}$

Therefore,
$$|\{(f_{\vec{\alpha}})\in\F_{\vec{d}(\vec{\alpha})} : \chi_{\ell(\vec{s})}(F_{(\vec{s})}^{(\vec{\omega})}(x_i)) = \epsilon_{\vec{s},\vec{\omega},i}, \vec{s}\in\s, \vec{\omega}\in\Omega_{\vec{s}}, i=1,\dots,\ell\}|$$
$$ = \sum_{M_{\vec{\beta}}}\sum_{\epsilon'_{\vec{s},\vec{\omega},i}} |\{(g_{\vec{\alpha}})\in\F_{\vec{d'}(\vec{\alpha})} : \chi_{\ell(\vec{s})}(G_{(\vec{s})}^{(\vec{\omega})}(x_i)) = \epsilon'_{\vec{s},\vec{\omega},i}, \vec{s}\in\s, \vec{\omega}\in\Omega_{\vec{s}}, i=1,\dots,\ell\}|$$
where the first sum is over all the partitions $M_{[\vec{\beta}]} = \bigcup_{\vec{\beta}\sim\vec{\beta'}} M_{\vec{\beta}}$, the second sum is over all $e(\vec{\beta})$ choices of $\chi_{\ell(\vec{s})}(G_{(\vec{s})}^{(\vec{\omega})}(x_i))$ and $\vec{d'}(\vec{\alpha})$ is the vector such that $d'(\vec{\alpha}) = d(\vec{\alpha})-m_{\vec{\alpha}}$. Now since $\epsilon'_{\vec{s},\vec{\omega},i}\not=0$ for all $\vec{s}\in\s, \vec{\omega}\in\Omega_{\vec{s}}$, $i=1,\dots,\ell$, we get by Proposition \ref{keyprop}, the above line is equal to
$$ \sum_{M_{\vec{\beta}}}\sum_{\epsilon'_{\vec{s},\vec{\omega},i}} \frac{L_{|G|-2}q^{\sum d'(\vec{\alpha})}}{\zeta_q(2)^{|G|-1}} \left(\frac{q}{|G|(q+|G|-1)}\right)^{\ell}\left(1 + O\left(q^{-\frac{\min(d(\vec{\alpha}))}{2}}\right)\right) $$
$$= \sum_{M_{\vec{\beta}}} \prod_{\substack{[\vec{\beta}]\in\tilde{\R} \\ [\vec{\beta}]\not=[\vec{0}]}} e(\vec{\beta})^{m_{[\vec{\beta}]}} \frac{L_{|G|-2}q^{\sum (d(\vec{\alpha}) - m_{\vec{\alpha}})}}{\zeta_q(2)^{|G|-1}} \left(\frac{q}{|G|(q+|G|-1)}\right)^{\ell}\left(1 + O\left(q^{-\frac{\min(d(\vec{\alpha}))}{2}}\right)\right)$$
$$=  \frac{L_{|G|-2}q^{\sum d(\vec{\alpha})}}{\zeta_q(2)^{|G|-1}} \sum_{M_{\vec{\beta}}} \prod_{\substack{[\vec{\beta}]\in\tilde{\R} \\ [\vec{\beta}]\not=[\vec{0}]}} \left(\frac{e(\vec{\beta})}{|G|(q+|G|-1)}\right)^{m_{[\vec{\beta}]}} \left(\frac{q}{|G|(q+|G|-1)}\right)^{m_{[\vec{0}]}}\left(1 + O\left(q^{-\frac{\min(d(\vec{\alpha}))}{2}}\right)\right)$$
$$=  \frac{L_{|G|-2}q^{\sum d(\vec{\alpha})}}{\zeta_q(2)^{|G|-1}} \prod_{\substack{[\vec{\beta}]\in\tilde{\R} \\ [\vec{\beta}]\not=[\vec{0}]}} \left(\frac{\phi(e(\vec{\beta})^2)}{|G|(q+|G|-1)}\right)^{m_{[\vec{\beta}]}} \left(\frac{q}{|G|(q+|G|-1)}\right)^{m_{[\vec{0}]}}\left(1 + O\left(q^{-\frac{\min(d(\vec{\alpha}))}{2}}\right)\right)$$
where the last equality comes from Corollary \ref{adcor} that states that there are $\phi(e(\vec{\beta}))$ different $\vec{\beta'}$ such that $\vec{\beta'}\sim\vec{\beta}$.

\end{proof}

Recall that $x_{q+1}$ is the point at infinity and if $(\vec{c},(f_{\vec{\alpha}}))\in\hat{\F}_{[\vec{d}(\vec{\alpha})]}$, then
$$ F_{(\vec{s})}^{(\vec{\omega})}(x_{q+1}) = \begin{cases} 0 & \sum_{j=1}^n \frac{\ell(\vec{s})}{s_j}\omega_jd_j \not\equiv 0 \Mod{\ell(\vec{s})} \\ c_{(\vec{s})}^{(\vec{\omega})} & \sum_{j=1}^n \frac{\ell(\vec{s})}{s_j}\omega_jd_j \equiv 0 \Mod{\ell(\vec{s})} \end{cases}.$$

\begin{prop}\label{valtakpropinf}

Let $\{\epsilon_{\vec{s},\vec{\omega},i} : \vec{s}\in\s,\vec{\omega}\in\Omega_{\vec{s}}\}$ be an admissible set for $1\leq i \leq q+1$ such that
$$m_{[\vec{\beta}]} := |\{1\leq i \leq q+1 :  \{\epsilon_{\vec{s},\vec{\omega},i} : \vec{s}\in\s,\vec{\omega}\in\Omega_{\vec{s}}\} \mbox{ is } [\vec{\beta}]-\mbox{admissible} \}|$$
then
$$|\{(\vec{c},(f_{\vec{\alpha}}))\in\hat{\F}_{[\vec{d}(\vec{\alpha})]} : \chi_{\ell(\vec{s})}(F_{(\vec{s})}^{(\vec{\omega})}(x_i)) = \epsilon_{\vec{s},\vec{\omega},i}, \vec{s}\in\s, \vec{\omega}\in\Omega_{\vec{s}}, i=1,\dots,q+1\}|$$
$$ = \frac{(q-1)^n(q+|G|-1)}{q}\frac{L_{|G|-2}q^{\sum d(\vec{\alpha})}}{\zeta_q(2)^{|G|-1}} \prod_{\substack{[\vec{\beta}]\in\tilde{\R} \\ [\vec{\beta}]\not=[\vec{0}] }} \left( \frac{\phi(e(\vec{\beta})^2)} {|G|(q+|G| -1)} \right)^{m_{[\vec{\beta}]}}\left( \frac{q}{|G|(q+|G|-1)} \right)^{m_{[\vec{0}]}}\times$$
$$\left(1 + O\left(q^{-\frac{\min(d(\vec{\alpha}))}{2}}\right)\right).$$

\end{prop}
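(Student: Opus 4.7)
The proposition extends Proposition \ref{valtakprop} by incorporating both the condition at $x_{q+1}$ and the leading coefficients $\vec{c}\in(\Ff_q^*)^n$. The natural approach is to decompose
\[
\hat{\F}_{[\vec{d}(\vec{\alpha})]} = (\Ff_q^*)^n\times\Bigl(\F_{\vec{d}(\vec{\alpha})}\cup\bigcup_{\vec{\beta}\in\R}\F^{\vec{\beta}}_{\vec{d}(\vec{\alpha})}\Bigr)
\]
and to analyze each piece according to how its tuples behave at $x_{q+1}$. A tuple in $(\Ff_q^*)^n\times\F_{\vec{d}(\vec{\alpha})}$ has $\deg(F_j)=d_j\equiv 0\Mod{r_j}$, so $F_{(\vec{s})}^{(\vec{\omega})}(x_{q+1})=c_{(\vec{s})}^{(\vec{\omega})}\neq 0$ for every $(\vec{s},\vec{\omega})$; this piece is automatically $[\vec{0}]$-admissible at $x_{q+1}$. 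A tuple in $(\Ff_q^*)^n\times\F^{\vec{\beta}}_{\vec{d}(\vec{\alpha})}$ has $\deg(F_j)\equiv-\beta_j\Mod{r_j}$, and by the dichotomy for $F_{(\vec{s})}^{(\vec{\omega})}(x_{q+1})$ just before the proposition its zero pattern is $[-\vec{\beta}]$-admissible, which equals $[\vec{\beta}]$-admissible by Corollary \ref{simcor} since $-1$ is a unit modulo every $r_j$. Consequently, if the prescribed pattern at $x_{q+1}$ is $[\vec{\gamma}]$-admissible, only $\F_{\vec{d}(\vec{\alpha})}$ (if $[\vec{\gamma}]=[\vec{0}]$) or the pieces $\F^{\vec{\beta'}}_{\vec{d}(\vec{\alpha})}$ with $\vec{\beta'}\sim\vec{\gamma}$ contribute, and Corollary \ref{simcor2} provides $\phi(e(\vec{\gamma}))$ such $\vec{\beta'}$.

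Within each piece I separate the counting into polynomial data and leading coefficients. The affine conditions on $(f_{\vec{\alpha}})$ at $x_1,\dots,x_q$ are handled directly by Proposition \ref{valtakprop}, applied either to $\F_{\vec{d}(\vec{\alpha})}$ or to $\F^{\vec{\beta'}}_{\vec{d}(\vec{\alpha})}$; in the latter case the degree of $f_{\vec{\beta'}}$ drops by one, so the leading factor $q^{\sum d(\vec{\alpha})}$ is replaced by $q^{\sum d(\vec{\alpha})-1}$ while the admissibility product factors are unchanged. For the leading coefficients, the constraint is $\chi_{\ell(\vec{s})}(c_{(\vec{s})}^{(\vec{\omega})})=\epsilon_{\vec{s},\vec{\omega},q+1}$ for all $(\vec{s},\vec{\omega})\in A_{\vec{\beta'}}$. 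Writing $\chi_{r_n}(c_j)=\zeta_{r_n}^{a_j}$ and using the bijection $A_{\vec{\beta'}}\leftrightarrow\{\vec{\omega'}\in\R^\dag:\sum_j\tfrac{r_n}{r_j}\omega'_j\beta'_j\equiv 0\Mod{r_n}\}$ from Section \ref{admis}, the condition becomes a linear system $\sum_j\tfrac{r_n}{r_j}\omega'_ja_j\equiv b_{\vec{\omega'}}\Mod{r_n}$ on $\vec{a}\in(\Z/r_n\Z)^n$ indexed by $\vec{\omega'}\in A_{\vec{\beta'}}$.

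The number of solutions $\vec{a}$ is the cardinality of the annihilator of $A_{\vec{\beta'}}$ under the natural pairing $(\Z/r_n\Z)^n\times\prod_j\Z/r_j\Z\to\Z/r_n\Z$, $\langle\vec{a},\vec{\omega'}\rangle=\sum_j\tfrac{r_n}{r_j}\omega'_ja_j$. Since $|A_{\vec{\beta'}}|=|G|/e(\vec{\beta'})$ by Corollary \ref{sizelemcor}, Pontryagin duality yields an annihilator of size $e(\vec{\beta'})\cdot r_n^n/|G|$. Because $q\equiv 1\Mod{r_n}$, each $\vec{a}$ lifts to $((q-1)/r_n)^n$ tuples $\vec{c}\in(\Ff_q^*)^n$, so the total number of compatible $\vec{c}$ is $e(\vec{\beta'})(q-1)^n/|G|$, which reduces to $(q-1)^n/|G|$ when $\vec{\beta'}=\vec{0}$. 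This duality step is the main technical hurdle; the rest is bookkeeping.

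Assembling the pieces completes the proof. In Case 1 ($[\vec{\gamma}]=[\vec{0}]$), multiplying Proposition \ref{valtakprop}'s count by $(q-1)^n/|G|$ and using $\tfrac{(q-1)^n(q+|G|-1)}{q}\cdot\tfrac{q}{|G|(q+|G|-1)}=\tfrac{(q-1)^n}{|G|}$ absorbs the $x_{q+1}$-contribution into the $m_{[\vec{0}]}$ exponent, yielding exactly the claimed formula. In Case 2 ($[\vec{\gamma}]\neq[\vec{0}]$), summing over the $\phi(e(\vec{\gamma}))$ choices of $\vec{\beta'}$, each contributing the $\vec{c}$-count $e(\vec{\gamma})(q-1)^n/|G|$ and an extra $q^{-1}$ from the polynomial count, produces a combined factor $\phi(e(\vec{\gamma}))\cdot e(\vec{\gamma})=\phi(e(\vec{\gamma})^2)$ via the elementary identity $\phi(n^2)=n\phi(n)$; after simplification this matches $\tfrac{(q-1)^n(q+|G|-1)}{q}\cdot\tfrac{\phi(e(\vec{\gamma})^2)}{|G|(q+|G|-1)}$, contributing one factor to $m_{[\vec{\gamma}]}$ in the target formula.
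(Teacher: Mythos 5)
Your proof is correct, and the overall structure (decompose $\hat{\F}_{[\vec{d}(\vec{\alpha})]}$ according to which piece $\F_{\vec{d}(\vec{\alpha})}$ or $\F^{\vec{\beta'}}_{\vec{d}(\vec{\alpha})}$ a tuple lies in, apply Proposition \ref{valtakprop} to the affine data, handle the $\vec{c}$-constraint at $x_{q+1}$ separately, and reassemble) matches the paper's two-case argument. Where you genuinely diverge is in counting the leading coefficients $\vec{c}$: the paper runs an explicit prime-by-prime computation --- for each $p\mid r_n$ it identifies a ``pivot'' index $k$, counts the choices for $c_k$ and then for the remaining $c_j$, and multiplies over $p$ --- whereas you observe that the constraints at $x_{q+1}$ are indexed by $A_{\vec{\beta'}}$, which by Corollary \ref{sizelemcor} has size $|G|/e(\vec{\beta'})$, and conclude by duality that the solution set (a coset of the annihilator) has size $e(\vec{\beta'})\,r_n^n/|G|$ in $(\Z/r_n\Z)^n$, hence $e(\vec{\beta'})(q-1)^n/|G|$ after lifting each $a_j$ mod $r_n$ to $a_j$ mod $q-1$. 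This is cleaner: it reuses the already-computed $|A_{\vec{\beta'}}|$ instead of redoing the prime-local bookkeeping, unifies Case 1 ($\vec{\beta'}=\vec{0}$, $e(\vec{\beta'})=1$) and Case 2, and sidesteps what appears to be a max/min typo in the paper's choice of pivot. The one point worth being careful about in a full write-up is that your pairing $(\Z/r_n\Z)^n\times\prod_j\Z/r_j\Z\to\Z/r_n\Z$ is not quite a perfect pairing --- it factors through the quotient $\prod_j\Z/r_j\Z$ of $(\Z/r_n\Z)^n$ --- so the annihilator size is $\bigl(\prod_j r_n/r_j\bigr)\cdot e(\vec{\beta'})$, not a bare Pontryagin complement; your stated answer $e(\vec{\beta'})r_n^n/|G|$ is exactly this, so the arithmetic is fine, but the phrase ``by Pontryagin duality'' should be glossed with that factorization. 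Likewise, consistency of the inhomogeneous system (so that the solution set is a nonempty coset) is guaranteed by the admissibility hypothesis, which you and the paper both use implicitly. The final bookkeeping in both cases, including the identity $\phi(n^2)=n\phi(n)$ and the cancellation $\frac{(q-1)^n(q+|G|-1)}{q}\cdot\frac{q}{|G|(q+|G|-1)}=\frac{(q-1)^n}{|G|}$, checks out.
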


\begin{rem}

Notice that we are looking at $(\vec{c},(f_{\vec{\alpha}}))\in\hat{\F}_{[\vec{d}(\vec{\alpha})]}$. That is, when we add in the point at infinity, we must consider the whole irreducible coarse moduli space.

\end{rem}

\begin{proof}

\textbf{Case 1:} $\epsilon_{\vec{s},\vec{\omega},q+1}\not=0$ for all $\vec{s}\in \s, \vec{\omega}\in \Omega_{\vec{s}}$

This means that $(\vec{c},(f_{\vec{\alpha}}))\in\hat{\F}_{\vec{d}(\vec{\alpha})}$ and $\chi_{\ell(\vec{s})}(F_{(\vec{s})}^{(\vec{\omega})} (x_{q+1}))$ will be determine by $\chi_{r_j}(F_j(x_{q+1}))$, $j=1,\dots,n$. Moreover, $\chi_{r_j}(c_j) = \chi_{r_j}(F_j(x_{q+1}))$, so $c_j$ has $(q-1)/r_j$ choices for all $j$. That is
$$|\{(\vec{c},(f_{\vec{\alpha}}))\in\hat{\F}_{[\vec{d}(\vec{\alpha})]} : \chi_{\ell(\vec{s})}(F_{(\vec{s})}^{(\vec{\omega})}(x_i)) = \epsilon_{\vec{s},\vec{\omega},i}, \vec{s}\in \s, \vec{\omega}\in \Omega_{\vec{s}}, 1\leq i \leq q+1 \}|$$
$$ = \sum_{c_j} |\{(f_{\vec{\alpha}})\in\F_{\vec{d}(\vec{\alpha})} : \chi_{\ell(\vec{s})}(F_{(\vec{s})}^{(\vec{\omega})}(x_i)) = \epsilon_{\vec{s},\vec{\omega},i}, \vec{s}\in \s, \vec{\omega}\in \Omega_{\vec{s}}, 1\leq i \leq q \}|$$
$$= \sum_{c_j} \frac{L_{|G|-2} q^{\sum d(\vec{\alpha})}}{\zeta_q(2)^{|G|-1}} \prod_{\substack{[\vec{\beta}]\in\tilde{\R} \\ [\vec{\beta}]\not=[\vec{0}]}} \left( \frac{\phi(e(\vec{\beta})^2)}{|G|(q+|G| -1)} \right)^{m_{[\vec{\beta}]}} \left( \frac{q}{|G|(q+|G|-1)} \right)^{m_{[\vec{0}]}-1}\left(1 + O\left(q^{-\frac{\min(d(\vec{\alpha}))}{2}}\right)\right) $$
$$ = \frac{(q-1)^n(q+|G|-1)}{q}\frac{L_{|G|-2} q^{\sum d(\vec{\alpha})}}{\zeta_q(2)^{|G|-1}}\prod_{\substack{[\vec{\beta}]\in \tilde{\R} \\ [\vec{\beta}]\not=[\vec{0}]}} \left( \frac{\phi(e(\vec{\beta})^2)} {|G|(q+|G| -1)} \right)^{m_{[\vec{\beta}]}}\left( \frac{q}{|G|(q+|G|-1)} \right)^{m_{[\vec{0}}]}\times$$
$$\left(1 + O\left(q^{-\frac{\min(d(\vec{\alpha}))}{2}}\right)\right)$$
where the sum is over all $c_j$ such that $\chi_{r_j}(c_j)=\chi_{r_j}(F_j(x_{q+1}))$.

\textbf{Case 2:} the set $\{\epsilon_{\vec{s},\vec{\omega},q+1}: \vec{s}\in \s, \vec{\omega} \in \Omega_{\vec{s}} \}$ is $[\vec{\beta}]$-admissible for some $[\vec{\beta}]\in\tilde{\R}$, $[\vec{\beta}]\not=[\vec{0}]$.

This means that $\deg(F_j)\equiv \beta'_j \Mod{r_j}$ for some $\vec{\beta'}\sim\vec{\beta}$ and that $(\vec{c},(f_{\vec{\alpha}}))\in\hat{\F}^{\vec{\beta'}}_{\vec{d}(\vec{\alpha})}$. Fix a $p|r_n$ and let $k$ be such that $\max(v_p(\frac{r_n}{r_j}\beta'_j)) = v_p(\frac{r_n}{r_k}\beta'_k)$. Then $\chi_{p^{v_p(\beta_k)}}(F_{k,p^{v_p(\beta_k)}}(x_{q+1})) = \chi_{p^{v_p(\beta_k)}}(c_k)$. So $c_k$ has $\frac{q-1}{p^{v_p(\beta_k)}}$ choices.

Now suppose $\beta'_j = p^{b_j}\gamma_j$ and let $\omega_k$ be such that
$$ \omega_k \equiv \gamma_k^{-1}\gamma_j p^{v_p(r_k\beta_j/r_j\beta_k)} \Mod{p^{v_p(r_k/\beta_k)}}.$$
then $\chi_{p^{v_p(r_j)}}(c_k^{\omega_k}c_j)\not=0$ will be fixed. Therefore, for a choice of $c_k$ there are $\frac{q-1}{p^{v_p(r_j)}}$ choices for $c_j$ that satisfy this property.

Likewise for another $p'|r_n$, $p\not=p'$, let $k'$ be such that $\max(v_{p'}(\frac{r_n}{r_j}\beta'_j)) = v_{p'}(\frac{r_n}{r_{k'}}\beta'_{k'})$. Then the number of choices for $c_{k'}$ will be divided by $(p')^{v_{p'}(\beta'_{k'})}$ whereas the number of choice for $c_j$, $j\not=k'$ will be divided by $(p')^{v_{p'}(r_j)}$. Hence, the number of choices for the $c_j$ will be
$$\frac{(q-1)^n}{\prod_{p|r_n} \left(p^{v_p(\beta_k)}\prod_{j\not=k}p^{v_p(r_j)}\right)} = e(\vec{\beta}) \prod_{j=1}^n \frac{(q-1)}{r_j}$$

Moreover, $m_{[\vec{\beta}]}$ goes to $m_{[\vec{\beta}]}-1$. So,

$$|\{(\vec{c},(f_{\vec{\alpha}}))\in\hat{\F}_{[\vec{d}(\vec{\alpha})]} : \chi_{\ell(\vec{s})}(F_{(\vec{s})}^{(\vec{\omega})}(x_i)) = \epsilon_{\vec{s},\vec{\omega},i}, \vec{s}\in \s, \vec{\omega}\in \Omega_{\vec{s}}, 1\leq i \leq q+1 \}|$$
$$ = e(\vec{\beta})\prod_{j=1}^n\frac{(q-1)}{r_j} \sum_{\vec{\beta'}\sim\vec{\beta}} |\{(f_{\vec{\alpha}})\in\F^{\vec{\beta'}}_{\vec{d}(\vec{\alpha})} : \chi_{\ell(\vec{s})}(F_{(\vec{s})}^{(\vec{\omega})}(x_i)) = \epsilon_{\vec{s},\vec{\omega},i}, \vec{s}\in \omega, \vec{\omega}\in \Omega_{\vec{s}}, 1\leq i \leq q\}| $$
$$ = e(\vec{\beta})\prod_{j=1}^n\frac{(q-1)}{r_j} \sum_{\vec{\beta'}\sim\vec{\beta}} \frac{L_{|G|-2} q^{\sum d(\vec{\alpha})-1}}{\zeta_q(2)^{|G|-1}} \prod_{\substack{[\vec{\beta}]\in\tilde{\R} \\ [\vec{\beta}]\not=[\vec{0}]}} \left( \frac{\phi(e(\vec{\beta})^2)}{|G|(q+|G| -1)} \right)^{m_{[\vec{\beta}]}} \times$$
$$\left( \frac{\phi(e(\vec{\beta})^2)}{|G|(q+|G| -1)} \right)^{-1}\left( \frac{q}{|G|(q+|G|-1)} \right)^{m_{\vec{0}}}\left(1 + O\left(q^{-\frac{\min(d(\vec{\alpha}))}{2}}\right)\right)$$
$$ = \frac{(q-1)^n(q+|G|-1)}{q}\frac{L_{|G|-2} q^{\sum d(\vec{\alpha})}}{\zeta_q(2)^{|G|-1}}\prod_{\substack{[\vec{\beta}] \in\tilde{\R} \\ [\vec{\beta}]\not=[\vec{0}]}} \left( \frac{\phi(e(\vec{\beta})^2)} {|G|(q+|G| -1)} \right)^{m_{[\vec{\beta}]}}\left( \frac{q}{|G|(q+|G|-1)} \right)^{m_{\vec{0}}}\times$$
$$\left(1 + O\left(q^{-\frac{\min(d(\vec{\alpha}))}{2}}\right)\right).$$

Therefore, regardless of what happens at $x_{q+1}$, we get the same result.

\end{proof}

\begin{cor}\label{valtakcor}

Let $\{\epsilon_{\vec{s},\vec{\omega},i} : \vec{s}\in\s,\vec{\omega}\in\Omega_{\vec{s}}\}$ be an admissible set for $1\leq i \leq q+1$ such that
$$m_{[\vec{\beta}]} := |\{1\leq i \leq q+1 :  \{\epsilon_{\vec{s},\vec{\omega},i} : \vec{s}\in\s,\vec{\omega}\in\Omega_{\vec{s}}\} \mbox{ is } [\vec{\beta}]-\mbox{admissible} \}|$$
then
$$\frac{|\{(\vec{c},(f_{\vec{\alpha}}))\in\hat{\F}_{[\vec{d}(\vec{\alpha})]} : \chi_{\ell(\vec{s})}(F_{(\vec{s})}^{(\vec{\omega})}(x_i)) = \epsilon_{\vec{s},\vec{\omega},i}, \vec{s}\in\s, \vec{\omega}\in\Omega_{\vec{s}}, i=1,\dots,q+1\}|} {|\hat{\F}_{[\vec{d}(\vec{\alpha})]}|}$$
$$ = \prod_{\substack{\vec{\beta}\in\R' \\ [\vec{\beta}]\not=[\vec{0}]}} \left( \frac{\phi(e(\vec{\beta})^2)} {|G|(q+|G| -1)} \right)^{m_{[\vec{\beta}]}}\left( \frac{q}{|G|(q+|G|-1)} \right)^{m_{[\vec{0}]}}\left(1 + O\left(q^{-\frac{\min(d(\vec{\alpha}))}{2}}\right)\right).$$

\end{cor}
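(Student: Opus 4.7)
The plan is to observe that this corollary is an immediate consequence of dividing the count supplied by Proposition \ref{valtakpropinf} by the total count supplied by Corollary \ref{keypropcor}. Both expressions share the identical prefactor $\frac{(q-1)^n(q+|G|-1)}{q}\frac{L_{|G|-2}q^{\sum d(\vec{\alpha})}}{\zeta_q(2)^{|G|-1}}$, so in the ratio these terms cancel exactly and one is left with the product over equivalence classes $[\vec{\beta}]\in\tilde{\R}$ that appears in the statement.

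The only substantive check is the manipulation of the error term. In the denominator we have $1 + O(q^{-\min(d(\vec{\alpha}))/2})$, which, since $\min(d(\vec{\alpha}))$ tends to infinity, is bounded away from zero for all sufficiently large $\vec{d}(\vec{\alpha})$. Hence
\begin{equation*}
\frac{1 + O(q^{-\min(d(\vec{\alpha}))/2})}{1 + O(q^{-\min(d(\vec{\alpha}))/2})} = 1 + O\!\left(q^{-\min(d(\vec{\alpha}))/2}\right),
\end{equation*}
and the error term in the corollary follows. No other step is needed; the combinatorics encoded in the $m_{[\vec{\beta}]}$ and the counting of $[\vec{\beta}]$-admissible sets was already done in Proposition \ref{valtakpropinf}, and the normalization by $|\hat{\F}_{[\vec{d}(\vec{\alpha})]}|$ merely converts the count into a probability. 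There is no genuine obstacle here; the work is entirely algebraic cancellation, with the only modest care being the uniformity of the error term, which is handled by the observation above.
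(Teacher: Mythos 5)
Your proposal matches the paper's proof exactly: the paper also obtains Corollary \ref{valtakcor} by dividing the count in Proposition \ref{valtakpropinf} by the normalization in Corollary \ref{keypropcor}, with the common prefactor cancelling. Your explicit check of the error-term manipulation is a sound (if routine) addition; nothing is missing.
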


\begin{proof}

Straight forward from Proposition \ref{valtakpropinf} and Corollary \ref{keypropcor}.
\end{proof}


\section{Proof of Theorem \ref{mainthm2}}\label{abelmainthm}

For any $(\vec{c},(f_{\vec{\alpha}}))\in\hat{\F}_{[\vec{d}(\vec{\alpha})]}$ and $x\in\Pp^1(\Ff_q)$,
$$\sum_{\vec{s}\in\s} \sum_{\vec{\omega}\in\Omega_{\vec{s}}} \chi_{\ell(\vec{s})}\left(F_{(\vec{s})}^{(\vec{\omega})}(x) \right) =  |\{\vec{s}\in\s, \vec{\omega}\in\Omega_{\vec{s}} : \chi_{\ell(\vec{s})}\left(F_{(\vec{s})}^{(\vec{\omega})}(x)\right) \not=0\}| $$
if $\chi_{\ell(\vec{s})}\left(F_{(\vec{s})}^{(\vec{\omega})}(x)\right) = 0$ or $1$ for all $\vec{s}\in\s, \vec{\omega}\in\Omega_{\vec{s}}$ and $0$ otherwise.

Now, if $\{\chi_{\ell(\vec{s})}\left(F_{(\vec{s})}^{(\vec{\omega})}(x) \right), \vec{s}\in\s, \vec{\omega}\in\Omega_{\vec{s}}\}$ is $[\vec{\beta}]$-admissible then
$$|\{\vec{s}\in\s, \vec{\omega}\in\Omega_{\vec{s}} : \chi_{\ell(\vec{s})}\left(F_{(\vec{s})}^{(\vec{\omega})}(x)\right) \not=0\}| = |A_{\vec{\beta}}| = \frac{|G|}{e(\vec{\beta})}.$$

Recall that $e(\vec{\beta}) = \lcm\left(\frac{r_j}{(r_j,\beta_j)}\right)|r_n$. Then the number of points lying over $x\in\Pp^1(\Ff_q)$ will be $\frac{|G|}{s_n}$ for some $s_n|r_n$.

\begin{prop}
Let $e_1,\dots,e_{q+1}$ be such that $e_i=0$ or $e_i = \frac{|G|}{s_{n,i}}$ for some $s_{n,i}|r_n$. For all $s|r_n$ let
$$m_s = |\{1\leq i \leq q+1: e_i=\frac{|G|}{s}\}|$$
and
$$m_0 = |\{1 \leq i \leq q+1 : e_i =0\}|$$
then
$$|\{(\vec{c},(f_{\vec{\alpha}})) \in \hat{\F}_{[\vec{d}(\vec{\alpha})]} : \sum_{\vec{s}\in\s} \sum_{\vec{\omega}\in\Omega_{\vec{s}}} \chi_{\ell(\vec{s})}\left(F_{(\vec{s})}^{(\vec{\omega})}(x_i)\right) =e_i, i=1,\dots,q+1 \}|$$
$$ = \left( \frac{(|G|-1)(q+|G|) -\sum_{s|r_n}s\phi_G(s)+1}{|G|(q+|G|-1)} \right)^{m_0} \left( \frac{q}{|G|(q+|G|-1)} \right)^{m_1} \prod_{\substack{s|r_n\\ s\not=1}} \left( \frac{s\phi_G(s)} {|G|(q+|G| -1)} \right)^{m_s} \times$$
$$ \left(1 + O\left(q^{-\frac{\min(d(\vec{\alpha}))}{2}}\right)\right)$$
where $\phi_G(s)$ is the number of elements of $G$ with order $s$.
\end{prop}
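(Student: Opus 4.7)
The plan is to decompose the count over the points $x_i$ according to which admissible equivalence class $[\vec{\beta}]$ describes the character pattern at $x_i$, and then apply Corollary \ref{valtakcor}.

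First, I would establish the dichotomy already sketched in the paragraph preceding the proposition: for any $(\vec{c},(f_{\vec{\alpha}}))\in\hat{\F}_{[\vec{d}(\vec{\alpha})]}$ and $x\in\Pp^1(\Ff_q)$ whose associated character values form a $[\vec{\beta}]$-admissible set, the sum $\sum_{\vec{s},\vec{\omega}} \chi_{\ell(\vec{s})}(F^{(\vec{\omega})}_{(\vec{s})}(x))$ equals $|A_{\vec{\beta}}|=|G|/e(\vec{\beta})$ when every non-zero entry is $1$, and vanishes otherwise. The vanishing uses that the non-zero entries, indexed by $A_{\vec{\beta}}$, form a character on the dual group structure (via the multiplicative relations of Lemmas \ref{admis1} and \ref{admis2}), so non-trivial character assignments contribute zero by orthogonality.

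Next, for each divisor $s\mid r_n$ I would count the admissible configurations at a single point whose associated point contribution equals $|G|/s$. The key observation is that, under the identification $\R'=G$, the invariant $e(\vec{\beta})=\lcm(r_j/(r_j,\beta_j))$ is precisely the order of $\vec{\beta}$ in $G$; hence the number of $\vec{\beta}\in\R'$ with $e(\vec{\beta})=s$ is $\phi_G(s)$. By Corollary \ref{simcor2}, each equivalence class $[\vec{\beta}]$ with $e(\vec{\beta})=s$ contains $\phi(s)$ elements, giving $\phi_G(s)/\phi(s)$ classes for $s>1$ and the single class $[\vec{0}]$ for $s=1$. Within each such class, exactly one admissible configuration has all non-zero values equal to $1$.

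Third, I would invoke Corollary \ref{valtakcor}, which gives for any prescribed sequence of admissible configurations the proportion $\prod_{[\vec{\beta}]\ne[\vec{0}]}\bigl(\tfrac{\phi(e(\vec{\beta})^2)}{|G|(q+|G|-1)}\bigr)^{m_{[\vec{\beta}]}}\bigl(\tfrac{q}{|G|(q+|G|-1)}\bigr)^{m_{[\vec{0}]}}$ up to the indicated error. Since this proportion factors across the points $x_i$, I sum over admissible single-point configurations with the required point count. Using $\phi(s^2)=s\phi(s)$, each point with $e_i=|G|/s$ (where $s>1$) contributes $\tfrac{\phi_G(s)}{\phi(s)}\cdot\tfrac{s\phi(s)}{|G|(q+|G|-1)}=\tfrac{s\phi_G(s)}{|G|(q+|G|-1)}$, and each point with $e_i=|G|$ contributes $\tfrac{q}{|G|(q+|G|-1)}$. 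The $e_i=0$ factor then follows by complementary counting: the single-point probabilities across all possible values in $\{0\}\cup\{|G|/s:s\mid r_n\}$ sum to $1$, so the residual is $1-\tfrac{q+\sum_{s|r_n,s\ne 1}s\phi_G(s)}{|G|(q+|G|-1)}$, which simplifies (via $\phi_G(1)=1$ and $(|G|-1)(q+|G|)=|G|(q+|G|-1)-q$) to the stated expression. Multiplying the single-point factors across $i=1,\dots,q+1$, and noting that only $O(1)$ configurations (independent of $q$) are summed at each point, yields the claimed formula with a single $(1+O(q^{-\min(d(\vec{\alpha}))/2}))$ error. I expect the main obstacle to be the rigorous justification of step one, namely the orthogonality argument identifying precisely when the character sum is non-zero.
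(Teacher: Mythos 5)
Your proof is correct and follows the paper's decomposition in all essentials: partition the points $x_i$ by the equivalence class $[\vec{\beta}]$ describing the admissible configuration, invoke Corollary \ref{valtakcor}, use the identities $\phi(s^2)=s\phi(s)$ and $e(\vec{\beta})=\mathrm{ord}_G(\vec{\beta})$, and count equivalence classes via Corollary \ref{simcor2}. The one genuine variation is in how you handle the $m_0$ factor: the paper directly counts, for each class $[\vec{\beta}]$, the $|A_{\vec{\beta}}|-1 = \frac{|G|}{e(\vec{\beta})}-1$ single-point admissible configurations with vanishing character sum, obtaining a factor $\phi(e(\vec{\beta}))(|G|-e(\vec{\beta}))$ per class, and then simplifies the resulting sum $\sum_{\vec{\beta}\in\R}(|G|-e(\vec{\beta}))$. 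You instead derive the $m_0$ factor by complementary counting, observing that the per-point main-term contributions over all values in $\{0\}\cup\{|G|/s : s\mid r_n\}$ must sum to $1$. This is a modest simplification that sidesteps the paper's explicit algebra, though it relies on the exact partition identity (the per-point ratios sum exactly to $1$, so the main term of the residual is determined), whereas the paper computes it directly; both are valid given the stated error term. As for the ``orthogonality'' step you flag as the main obstacle: this is not actually a gap that needs filling by character orthogonality in the present proof. The identity asserting the character sum equals $|A_{\vec{\beta}}|$ or $0$ is a restatement of the point-counting formula from Section \ref{abelnumpts}, where the sum is identified with $\prod_k\bigl(1+\sum_i\chi^i(H_{j_k}(x))\bigr)$ — a product of Kummer-theoretic local factors each of which is already known to take values in $\{0,1,m_{j_k}\}$. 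The geometric interpretation (number of points lying over $x$) makes the dichotomy immediate without a separate orthogonality argument.
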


\begin{proof}

Let
$$M_s = \{1\leq i \leq q+1: e_i=\frac{|G|}{s}\}$$
$$M_0 = \{1 \leq i \leq q+1 : e_i =0\}.$$

If $i\in M_s$, $s\not=0$, then the set
$$\{\chi_{\ell(\vec{s})}(F_{(\vec{s})}^{(\vec{\omega})}(x_i)), \vec{s}\in\s, \vec{\omega}\in\Omega\}$$
will be $[\vec{\beta}]$-admissible for some $\vec{\beta}$ such that $e(\vec{\beta})=s$. Moreover, if $(\vec{s},\vec{\omega})\in A_{\vec{\beta}}$ then $\chi_{\ell(\vec{s})}(F_{(\vec{s})}^{(\vec{\omega})}(x_i))=1$.

Fix a partition of $M_s$ as
$$M_s = \bigcup_{\substack{[\vec{\beta}]\in\tilde{\R} \\ e(\vec{\beta})=s}} M_{[\vec{\beta}]} = \bigcup_{\substack{[\vec{\beta}]\in\tilde{\R} \\ e(\vec{\beta})=s}}\{ i\in M_s : \{\chi_{\ell(\vec{s})}(F_{(\vec{s})}^{(\vec{\omega})}(x_i)), \vec{s}\in\s, \vec{\omega}\in\Omega\} \mbox{ is $[\vec{\beta}]$-admissible} \}$$
and let $m_{[\vec{\beta}]} = |M_{[\vec{\beta}]}|$.

If $i\in M_0$, then the set
$$\{\chi_{\ell(\vec{s})}(F_{(\vec{s})}^{(\vec{\omega})}(x_i)), \vec{s}\in\s, \vec{\omega}\in\Omega\}$$
can be $[\vec{\beta}]$-admissible for any $[\vec{\beta}]\in\tilde{\R}$ as long as at least one of $\chi_{\ell(\vec{s})}(F_{(\vec{s})}^{(\vec{\omega})})\not=0$ or $1$.

Fix a partition of $M_0$ as
$$M_0 = \bigcup_{[\vec{\beta}]\in\tilde{\R}} M_{0,[\vec{\beta}]} = \bigcup_{[\vec{\beta}]\in\tilde{\R}}\{ i\in M_0 : \{\chi_{\ell(\vec{s})}(F_{(\vec{s})}^{(\vec{\omega})}(x_i)), \vec{s}\in\s, \vec{\omega}\in\Omega\} \mbox{ is $[\vec{\beta}]$-admissible} \}$$
and let $m_{0,[\vec{\beta}]} = |M_{0,[\vec{\beta}]}|$.

If $i\in M_{[\vec{\beta}]}$ then there is only one choice for the set $\{\chi_{\ell(\vec{s})}(F_{(\vec{s})}^{(\vec{\omega})}(x_i)), \vec{s}\in\s, \vec{\omega}\in\Omega\}$. (Namely, $\chi_{\ell(\vec{s})}(F_{(\vec{s})}^{(\vec{\omega})}(x_i))=1$ if $(\vec{s},\vec{\omega})\in A_{\vec{\beta}}$ and $0$ otherwise.) If $i\in M_{0,[\vec{\beta}]}$, then there will be $|A_{\vec{\beta}}|-1=\frac{|G|}{e(\vec{\beta})}-1$ choices for the set $\{\chi_{\ell(\vec{s})}(F_{(\vec{s})}^{(\vec{\omega})}(x_i)), \vec{s}\in\s, \vec{\omega}\in\Omega\}$.

Therefore,
$$\frac{|\{(\vec{c},(f_{\vec{\alpha}})) \in \hat{\F}_{[\vec{d}(\vec{\alpha})]} : \sum_{\vec{s}\in\s} \sum_{\vec{\omega}\in\Omega_{\vec{s}}} \chi_{\ell(\vec{s})}\left(F_{(\vec{s})}^{(\vec{\omega})}(x_i)\right) = \frac{|G|}{s_{n,i}}, i=1,\dots,q+1 \}|}{|\hat{\F}_{[\vec{d}(\vec{\alpha})]}|}$$
$$ = \sum_{M_{[\vec{\beta}]}}\sum_{M_{0,[\vec{\beta}]}} \sum_{\epsilon_{\vec{s},\vec{\omega},i}} \frac{|\{(\vec{c},(f_{\vec{\alpha}}))\in\hat{\F}_{[\vec{d}(\vec{\alpha})]} : \chi_{\ell(\vec{s})}(F_{(\vec{s})}^{(\vec{\omega})}(x_i)) = \epsilon_{\vec{s},\vec{\omega},i}, \vec{s}\in\s, \vec{\omega}\in\Omega_{\vec{s}}, i=1,\dots,q+1\}|}{{|\hat{\F}_{[\vec{d}(\vec{\alpha})]}}|} $$
where the first two sums are over all the partitions of $M_s$, $s|r_n$ and $M_0$, respectively, and the third sum is over all possible choices for $\{\chi_{\ell(\vec{s})}(F_{(\vec{s})}^{(\vec{\omega})}(x_i)), \vec{s}\in\s, \vec{\omega}\in\Omega\}$.

$$ =\sum_{M_{[\vec{\beta}]}}\sum_{M_{0,[\vec{\beta}]}} \sum_{\epsilon_{\vec{s},\vec{\omega},i}} \prod_{\substack{\vec{\beta}\in\R' \\ [\vec{\beta}]\not=[\vec{0}]}} \left( \frac{\phi(e(\vec{\beta})^2)} {|G|(q+|G| -1)} \right)^{m_{[\vec{\beta}]}+m_{0,[\vec{\beta}]}}\left( \frac{q}{|G|(q+|G|-1)} \right)^{m_{[\vec{0}]} + m_{0,[\vec{0}]}} \times$$
$$ \left(1 + O\left(q^{-\frac{\min(d(\vec{\alpha}))}{2}}\right)\right)$$
$$ =\sum_{M_{[\vec{\beta}]}} \prod_{\substack{s|r_n\\ s\not=1}} \left( \frac{\phi(s^2)} {|G|(q+|G| -1)} \right)^{m_s}\left( \frac{q}{|G|(q+|G|-1)} \right)^{m_1} \times$$
$$ \sum_{M_{0,[\vec{\beta}]}}\prod_{\substack{\vec{\beta}\in\R' \\ [\vec{\beta}]\not=[\vec{0}]}} \left( \frac{\phi(e(\vec{\beta}))(|G|-e(\vec{\beta}))} {|G|(q+|G| -1)} \right)^{m_{0,[\vec{\beta}]}}\left( \frac{(|G|-1)q}{|G|(q+|G|-1)} \right)^{m_{0,[\vec{0}]}} \left(1 + O\left(q^{-\frac{\min(d(\vec{\alpha}))}{2}}\right)\right)$$
$$ = \prod_{\substack{s|r_n\\ s\not=1}} \left( \frac{\phi(s^2)\sum_{e([\vec{\beta}])=s} 1} {|G|(q+|G| -1)} \right)^{m_s}\left( \frac{(|G|-1)q + \sum_{\substack{[\vec{\beta}]\in\tilde{\R} \\ [\vec{\beta}]\not=[\vec{0}]}} \phi(e(\vec{\beta}))(|G|-e(\vec{\beta}))}{|G|(q+|G|-1)} \right)^{m_0}\times$$
$$\left(\frac{q}{|G|(q+|G|-1)} \right)^{m_1} \left(1 + O\left(q^{-\frac{\min(d(\vec{\alpha}))}{2}}\right)\right).$$

First note that since there exists $\phi(e(\vec{\beta}))$ such $\vec{\beta'}$ such that $[\vec{\beta}']=[\vec{\beta}]$ so we can write
$$\phi(s^2)\sum_{e([\vec{\beta}])=s} 1 = s \sum_{e(\vec{\beta})=s} 1$$
and
$$\sum_{\substack{[\vec{\beta}]\in\tilde{\R} \\ [\vec{\beta}]\not=[\vec{0}]}} \phi(e(\vec{\beta}))(|G|-e(\vec{\beta})) = \sum_{\vec{\beta}\in\R} (|G|-e(\vec{\beta})) = (|G|-1)|G| - \sum_{\vec{\beta}\in\R} e(\vec{\beta}).$$

Now, for every $\vec{\beta}\in\R'$, we can view it in a natural way as element of $G$. Moreover, the order of $\vec{\beta}$ would be $e(\vec{\beta})$. Hence $s \sum_{e(\vec{\beta})=s} 1 = s\phi_G(s)$. Further
$$\sum_{\vec{\beta}\in\R} e(\vec{\beta}) = \sum_{\substack{s|r_n \\ s\not=1}} s \sum_{e(\vec{\beta})=s} 1 = \sum_{s|r_n} s\phi_G(s)-1.$$

\end{proof}

%

Finally, we end it with the proof of Theorem \ref{mainthm2}.

\begin{proof}[Proof of Theorem \ref{mainthm2}]

$$\frac{|\{C\in\Hh^{(\vec{d}(\vec{\alpha}))} : \#C(\Pp^1(\Ff_q)) = M\}|}{|\Hh^{(\vec{d}(\vec{\alpha}))}|}$$
$$ = \sum_{\substack{e_1,\dots,e_{q+1} \\ \sum e_i = M}} |\{(\vec{c},(f_{\vec{\alpha}})) \in \hat{\F}_{[\vec{d}(\vec{\alpha})]} : \sum_{\vec{s}\in\s} \sum_{\vec{\omega}\in\Omega_{\vec{s}}} \chi_{\ell(\vec{s})}\left(F_{(\vec{s})}^{(\vec{\omega})}(x_i)\right) =e_i, i=1,\dots,q+1 \}|$$
$$ = \sum_{\substack{e_1,\dots,e_{q+1} \\ \sum e_i = M}} \left( \frac{(|G|-1)(q+|G|) -\sum_{s|r_n}s\phi_G(s)+1}{|G|(q+|G|-1)} \right)^{m_0} \left( \frac{q}{|G|(q+|G|-1)} \right)^{m_1} \times $$
$$\prod_{\substack{s|r_n\\ s\not=1}} \left( \frac{s\phi_G(s)} {|G|(q+|G| -1)} \right)^{m_s} \left(1 + O\left(q^{-\frac{\min(d(\vec{\alpha}))}{2}}\right)\right)$$
$$ = \Prob\left(\sum_{i=1}^{q+1} X_i= M\right)\left(1 + O\left(q^{-\frac{\min(d(\vec{\alpha}))}{2}}\right)\right).$$
\end{proof}

\textbf{Acknowledgements:} I would like to thank Chantal David for the countless discussions we had about this topic. I would also like to thank Elisa Lorenzo, Giulio Meleleo and Piermarco Milione for their helpful discussions about their paper which shed light on how to approach this problem.

\bibliography{AbelianCovers}
\bibliographystyle{amsplain}

\end{document}